\definecolor{blue}{rgb}{0,0,0.9} 
\definecolor{red}{rgb}{0.9,0,0} 
\definecolor{green}{rgb}{0,0.9,0}
\theoremstyle{plain}
\newtheorem{theo}{Theorem}[section]
\newtheorem{claim}[theo]{Claim}
\newtheorem{ques}[theo]{Question}
\newtheorem{lem}[theo]{Lemma}
\newtheorem{coro}[theo]{Corollary}
\newtheorem{prop}[theo]{Proposition}
\theoremstyle{definition}
\newtheorem{defi}[theo]{Definition}
\theoremstyle{remark}
\newtheorem{rem}[theo]{Remark}
\def\<{\big\langle}
\def\>{\big\rangle}
\def\E{\mathcal{E}}
\def\B{\mathcal{B}}
\def\V{\mathcal{V}}
\def\C{\mathcal{C}}
\def\I{\mathcal{I}}
\def\J{\mathcal{J}}
\def\R{\mathbb{R}}
\def\N{\mathbb{N}}
\def\S{\mathbb{S}}
\def\rr{{\rm rank}}
\def\dd{{\rm diag}}
\def\P{{\bf proj}}
\def\1{{\bf 1}}
\def\O{\mathcal{O}}
\def\tw{{\rm tw}}
\def\pw{{\rm pw}}
\def\setm{[m]\sqcup\{0\}}
\def\({\left(}
\def\){\right)}
\tikzset{every picture/.style={line width=0.75pt}} 
\let\svthefootnote\thefootnote
\newcommand\blankfootnote[1]{%
	\let\thefootnote\relax\footnotetext{#1}%
	\let\thefootnote\svthefootnote%
}
\begin{document}
	\title{Exploring chordal sparsity in semidefinite programming with sparse plus low-rank data matrices}
	\author{Tianyun Tang
\thanks{Institute of 
Operations Research and Analytics, National
         University of Singapore, Singapore
         117602 ({\tt ttang@u.nus.edu}).
         }, \quad 
	 Kim-Chuan Toh\thanks{Department of Mathematics, and Institute of 
Operations Research and Analytics, National
         University of Singapore, 
       Singapore
         119076 ({\tt mattohkc@nus.edu.sg}).  The research of this author is supported
by the Ministry of Education, Singapore, under its Academic Research Fund Tier 3 grant call (MOE-2019-T3-1-010).}
	 }
	\date{\today} 
	\maketitle

\begin{abstract}
Semidefinite programming (SDP) problems are challenging {to solve} because of their {high} dimensionality. However, {solving} sparse SDP problems with small tree-width are known to be relatively {easier} because: {(1)} they can be decomposed into smaller multi-block SDP problems through chordal conversion; {(2)} they have low-rank optimal solutions. In this paper, we study more general SDP problems whose coefficient matrices have sparse plus low-rank (SPLR) structure. We develop a unified framework to convert such problems into sparse SDP problems with bounded tree-width. Based on this, we derive rank bounds for SDP problems with SPLR structure, which are tight in the worst case. 
\end{abstract}	
	
\bigskip
\noindent{\bf keywords:} semidefinte programming, sparsity pattern, low-rank property
\\[5pt]
{\bf Mathematics subject classification: 90C22, 90C25, 90C35}	
	

\section{Introduction}

\subsection{Linear SDP problem}
Let $\mathbb{S}^n$ be the vector space of $n\times n$ symmetric matrices endowed with the standard trace inner product and its induced Frobenious norm. 
We consider the following linear semidefinite programming (SDP) problem,

\begin{equation}\label{SDP} 
\min\left\{ \<A_0,X\>:\ \forall i\in [m]\ b_i^l\leq \<A_i,X\>\leq b_i^u,\ X\in \S^n_+ \right\}, \tag{SDP} 
\end{equation} 
where for any $i\in \setm,$ $A_i\in \S^n$ and for any $i\in [m],$ $-\infty\leq b_i^l\leq b_i^u< +\infty.$ 
In the above, $\mathbb{S}^n_+$ denotes the cone of positive semidefinite matrices in $\mathbb{S}^n.$
The problem (\ref{SDP}) is a traditional SDP problem, which can be solved by many well-developed solvers like SeDuMi \cite{sturm1999using}, SDPT3 \cite{TTT,T3Q}, and SDPNAL+ \cite{SDPNALp2,SDPNALp1}. Those solvers would run into memory issue when $n$ is large because the dimensionality of the primal variable $X$ is $\Omega(n^2).$ {Therefore}, in practice, dimension reduction techniques are usually applied when solving SDP problems with huge matrix dimension $n.$ {In this paper, we aim to develop a new technique to tackle SDP problems with sparse plus low-rank structure in the data matrices as we shell elaborate in the next few subsections.}

\subsection{Sparse SDP and chordal conversion}
For (\ref{SDP}), we call {a} graph\footnote{In this paper, by graph, we always mean simple undirected graph.} $G=([n],\E)$ {as} its sparsity pattern if for any $i\in \setm,$ $A_i\in \S^n(\E,0),$ where $\S^n(\E,0)$ is defined as follows:
\begin{equation}
\S^n(\E,0):= \left\{ X\in \S^n:\ X_{ij}=X_{ji}=0,\ \forall i,j\in [n],\ {\rm s.t.}\ i\neq j,\ ij\notin \E \right\}. \notag
\end{equation} 
We use $\P_{\E}(\cdot)$ to denote the orthogonal projection mapping from $\mathbb{S}^n$ onto $\S^n(\E,0)$. {In the following, we will use some definitions in graph theory such as tree decomposition: $(T,\V),$ width: ${\rm wid}(T),$ tree-width: $\tw(G)$, and path-width: $\pw(G).$ Readers may 
refer to Section~\ref{Sec-twpw} first for these definitions.}

One important dimension reduction technique for sparse SDP is chordal conversion, introduced by Kojima et al. in \cite{nakata2003exploiting,fukuda2001exploiting}. According to Grone’s and Agler’s theorems \cite{grone1984positive,agler1988positive}, the problem (\ref{SDP}) can be converted into a multi-block SDP problem with the matrix dimension of each block bounded by $\tw(G)+1$ {through} a process called chordal conversion. When {the tree-width} $\tw(G)$ is small, the {matrix} dimension is significantly reduced so that the new problem can be solved by interior point method{s} or first order method{s} \cite{zheng2020chordal,kang2024fast} efficiently. 
Although sparse SDP problems in general may not have small tree-width \cite{do2024note}, many graphs {arising} from domains like optimal power flow \cite{zhang2023parameterized}, 
control theory \cite{fazelnia2016convex}, and polynomial optimization \cite{madani2014rank}, {often} have constant tree-width. 
In this case, Zhang and Lavaei \cite{zhang2021sparse} show that when the matrices $A_i$'s satisfy {a certain} partial separability 
{property}, (\ref{SDP}) can be solved in nearly linear time {with respect to the matrix dimension $n$}.

\subsection{SDP with sparse plus low-rank structure}\label{subsec-example} 
In \cite[section 14.1]{vandenberghe2015chordal}, Vandenberghe et al. described a kind of SDP problems with dense data matrices whose sparsity can be improved through variable transformation, a technique first introduced in 
\cite[section 6]{fukuda2001exploiting}. The key structure is that the data matrices, although dense, have sparse plus low-rank decomposition. We formally state the definition of the sparse plus low-rank structure as follows{.} 

\begin{defi}\label{SPLRdefi}
Consider $\ell\in \N$ and graph $G=([n],\E).$ We say that the problem (\ref{SDP}) has $(G,\ell)-$SPLR {(sparse plus low-rank)} structure if the {\bf there exists} a decomposition of its coefficient matrices $A_i=A_i^s+A_i^l$ such that $A_i^s\in \S^n(\E,0)$ for any $i\in \setm$ and $\rr\(\left[A_0^l,A_1^l,\ldots,A_m^l\right]\)\leq \ell.$ 
\end{defi}

The SPLR structure {often} appears in many SDP problems such as minimum bisection SDP \cite{rendl1999semidefinite}, Lov\'asz theta SDP \cite{Theta}, quadratic knapsack SDP \cite{helmberg2000semidefinite} and maximum variance unfolding problem \cite{weinberger2006unsupervised}. More generally, consider the following SDP relaxation of {a} general non-convex quadratic programming problem \cite{burer2009copositive}:
\begin{equation}\label{BQP}
\min\left\{ x^\top Qx+2c^\top x:\  Ax=b,\ x_i\in \{0,1\},\ \forall i\in B,\ x\in \R^n_+\right\},
\end{equation}
where $Q\in \S^n,$ $c\in \R^n,$\ $A\in \R^{\ell\times n},$ $b\in \R^\ell$ and $B\subset [n].$
The SDP relaxation of (\ref{BQP}) \cite[section 5]{burer2009copositive} {has the following form}:
\begin{multline}\label{BQPSDP}
\min\Bigg\{ \left\langle\begin{bmatrix} 0&c^\top \\ c&Q \end{bmatrix},\ Y\right\rangle:\ Y_{11}=1,\ Y_{2:n+1,1}\in \R^n_+,\ Y_{(i+1)(i+1)}=Y_{(i+1)1}\ \forall i\in B,\\
 \left\langle \begin{bmatrix}-b^\top \\ A^\top\end{bmatrix}\left[ -b,A \right],Y \right\rangle=0,\ Y\in \S^{n+1}_+\Bigg\}.
\end{multline}
Let $G=\([n+1]\setminus\{1\},\E\)$ be the sparsity pattern of $Q.$ If we consider {the data matrices of the} first three constraints in (\ref{BQPSDP}) as sparse and the fourth one as low-rank, then Problem (\ref{BQPSDP}) has $\(G',\ell\)-$SPLR structure where $G'=\([n+1],\ \E':=\E\sqcup\{1(i+1):\ i\in [n]\}\).$

One popular technique to improve the sparsity of (\ref{SDP}) with $(G,\ell)-$SPLR structure is {\bf congruent transformation}, a method first introduced by Kojima et al. in \cite[section 6]{fukuda2001exploiting}. In detail, consider variable {transformation} $Y:=P^{-\top} X P^{-1}$ for some {given} invertible matrix $P\in \R^{n\times n}.$ The problem (\ref{SDP}) is equivalent to the following problem,

\begin{equation}\label{SDPcong}
\min\left\{ \<PA_0P^\top,Y\>:\ \forall i\in [m]\ b_i^l\leq \<PA_iP^\top,Y\>\leq b_i^u,\ Y\in \S^n_+ \right\}.
\end{equation}
Suppose $W\in \R^{n\times \ell}$ is {an orthonormal} basis matrix of $\left[A_0^l,A_1^l,\ldots,A_m^l\right]$ and $P:=[W,V]^\top,$ where $V\in \R^{n\times (n-\ell)}$ is the orthonormal complement of $W.$ Then for any $i\in \setm,$ $PA_iP^\top=P\(A_i^s+A_i^l\)P^\top=PA_i^sP^\top+PA_i^lP^\top,$ where $PA_i^lP^\top$ is a sparse matrix with the nonzero entries appearing in the first $\ell\times \ell$ principle submatrix. If the orthonormal matrix {$P$} is sparse, then the matrix $PA_i^sP^\top$ could be also sparse for any $i\in \setm$ so that the problem (\ref{SDPcong}) has a sparse sparsity pattern. 

Although congruent transformation {can be} an effective preprocessing technique in practice, this technique remains a heuristic {that has only been applied successfully to highly specialized SPLR structure such as the SDP relaxation of minimum bisection problems which has a constraint matrix of the form $ee^\top$, where $e\in\mathbb{R}^n$ is the matrix of all ones.} To our {best} knowledge, there is currently no systematic way to select $P$ so that the new problem (\ref{SDPcong}) is sparse. Moreover, the impact of the congruent transformation on the tree-width of the underlying graph has not been studied yet. In other words, even if the transformed problem (\ref{SDPcong}) is sparse, its tree-width could be much greater than the original $\tw(G).$ Considering these two facts, it would be natural to ask the following question:

\begin{ques}\label{quesp}
Is there a systematic way to convert (\ref{SDP}) with $(G,\ell)-$SPLR structure into a sparse SDP problem whose tree-width is bounded by some function $f(\tw(G),\ell)$? 
\end{ques}

In this paper, we will answer the Question~\ref{quesp} affirmatively. Our contributions are stated in the following two subsections. Before that, we present notation that will be used frequently in this paper.

{
{\bf Notation.}
We use the notation $Y_{I,J}$ to denote the submatrix obtained 
by extracting the rows and columns of a matrix $Y$ in the index sets $I$ and $J$, respectively.
We also use $Y_{I,:}$ to denote the submatrix obtained from $Y$ by extracting the 
rows in the index set $I$. 
Also, we use a notation such as ``$\ell+1:2\ell$" to denote the index set $\{\ell+1,\ell+2,\ldots,2\ell\}$.
For any two matrices $P,Q$ with compatible dimensions, we use $[P,Q]$ and $[P;Q]$ to denote the concatenation of $P$ and $Q$ column-wise and row-wise respectively.
}

\subsection{Contribution 1: sparse extension}

In Section~\ref{Sec:SET}, we will propose a systematic method called ``sparse extension" to transfer (\ref{SDP}) with SPLR structure into a sparse SDP problem with bounded tree-width. Instead of using congruent transformation, the idea of ``sparse extension" is to introduce auxiliary variables to extend (\ref{SDP}) into a high{er} dimensional space where the low-rank data matrices are decomposed into several sparse matrices. In order to state our contribution as a theorem, we {need} the following definition.

\begin{defi}\label{defi:ex}
Consider the following SDP problems 
\begin{equation}\label{SDPex} 
\min\left\{ \big\langle\widehat{A}_0,\widehat{X}\big\rangle:\ \forall i\in [\hat{m}],\ \widehat{b}_i^l\leq \big\langle\widehat{A}_i,\widehat{X}\big\rangle\leq \widehat{b}_i^u,\ \widehat{X}\in \S^{\hat{n}}_+ \right\}, \tag{SDPex} 
\end{equation} 
where for any $i\in \{0\}\sqcup[\hat{m}],$ $\hat{A}_i\in \S^{\hat{n}}$ and for any $i\in [\hat{m}],$ $-\infty\leq \widehat{b}_i^l\leq \widehat{b}_i^u< +\infty.$ We call (\ref{SDPex}) {an} {\bf extension} of (\ref{SDP}) if there exists {an} index set $\I{\subset} [\hat{n}]$ such that for any feasible $\widehat{X}$ of (\ref{SDPex}), $\widehat{X}_{\I,\I}$ is feasible for (\ref{SDP}) and {$\langle\widehat{A}_0,\widehat{X}\rangle=\langle A_0,\widehat{X}_{\I,\I}\rangle.$} Also, for any feasible $X$ for (\ref{SDP}), there exists feasible solution $\widehat{X}$ for (\ref{SDPex}) such that $\widehat{X}_{\I,\I}=X.$
\end{defi}

From Definition~\ref{defi:ex}, it is easy to see that (\ref{SDPex}) and (\ref{SDP}) are equivalent in the sense that they have the same optimal value. Also, suppose $\widehat{X}$ is {an} optimal solution of (\ref{SDPex}), then $\widehat{X}_{\I,\I}$ is {an} optimal solution of (\ref{SDP}). With Definition~\ref{defi:ex}, the property of our sparse extension can be summarized {in} the following theorem.

\begin{theo}\label{theo:spex}
Suppose the problem (\ref{SDP}) has $(G,\ell)-$SPLR structure for some $\ell\in \N$. {Let $\(T,\V\)$ be a tree-decomposition of $G$ such that $|T|=p$ and its width 
{\rm wid}($T,{\cal V}$)
=\tw(G).} Then it has a sparse extension (\ref{SDPex}), {whose precise form is given 
in \eqref{newprob},}
with sparsity pattern $\widehat{G}=(\widehat{n},\widehat{E})$ such that $\widehat{n}\leq n+{2p\cdot \ell}$ and $\tw(\widehat{G})\leq \tw(G)+3\ell.$ Moreover, if {T is a path and $\tw(G)=\pw(G)$ (path-width),} then the bound{s} can be strengthened to $\hat{n}\leq n+{p\cdot\ell}$ and $\tw(\widehat{G})\leq \tw(G)+2\ell.$
\end{theo}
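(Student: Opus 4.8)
The plan is to construct the extension explicitly by decomposing each low-rank correction matrix $A_i^l$ along the tree decomposition and introducing auxiliary blocks that ``absorb'' the low-rank part into a sparse pattern. Let $W\in\R^{n\times\ell}$ be an orthonormal basis matrix for $[A_0^l,\ldots,A_m^l]$, so that each $A_i^l = W C_i W^\top$ for some $C_i\in\S^\ell$ (after symmetrizing; if $A_i^l$ is not symmetric in this factored form we write $A_i^l = WD_i + D_i^\top W^\top$ and carry $D_i\in\R^{\ell\times n}$ instead). The key idea: in the extended variable $\widehat X\in\S^{\hat n}_+$, we reserve the original index set $\I = [n]$ for $X$, and for each bag $t\in T$ we attach a small block of $\ell$ auxiliary coordinates whose job is to represent the product $W^\top X$ restricted to the vertices in that bag. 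Concretely, for bag $\V_t$ with $|\V_t|\le \tw(G)+1$, we add auxiliary rows/columns $u^t\in\R^\ell$ and impose linear constraints of the form $u^t = (W_{\V_t,:})^\top X_{\V_t,:}\,e$ — or more precisely the constraints that force the relevant entries of $\widehat X$ in the auxiliary block to equal the appropriate linear combinations of entries of $X$ indexed within $\V_t$. Because $A_i^l$ has rank $\le\ell$, the inner product $\langle A_i^l, X\rangle = \langle C_i, W^\top X W\rangle$ can then be rewritten purely in terms of these auxiliary coordinates, which makes the data matrices of the transformed constraints supported on the (small) auxiliary blocks plus the sparse part $A_i^s\in\S^n(\E,0)$.

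The second step is to verify the extension property of Definition~\ref{defi:ex}. For the forward direction, given feasible $\widehat X\succeq 0$, the linking constraints guarantee $\widehat X_{\I,\I}$ satisfies the original sparse and rewritten-low-rank constraints, and positive semidefiniteness is inherited by taking a principal submatrix; the objective matches by construction since $\langle A_0,X\rangle = \langle A_0^s,X\rangle + \langle A_0^l,X\rangle$ is reproduced term by term. For the reverse direction, given feasible $X\succeq 0$ for (\ref{SDP}), we build $\widehat X$ by a Schur-complement / outer-product completion: set the auxiliary coordinates to the prescribed linear images of $X$ and fill the cross-blocks so that $\widehat X = \begin{bmatrix} I \\ M \end{bmatrix} X \begin{bmatrix} I & M^\top \end{bmatrix}$ for the appropriate matrix $M$ encoding ``multiply by $W^\top$ on each bag''; this is automatically PSD and agrees with $X$ on $\I$. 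One must check the constraints tying together auxiliary blocks of overlapping bags are consistent — this follows because on the intersection $\V_s\cap\V_t$ both define the same linear function of $X$.

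The third step is the graph-theoretic bookkeeping that yields the dimension and tree-width bounds. For the dimension: the factor $2$ in $\hat n\le n + 2p\ell$ comes from needing two $\ell$-blocks per bag in the general (tree) case — one to hold $W^\top X$ on that bag and one auxiliary copy used to propagate consistency between a bag and its parent along the tree edges, so that each linking constraint involves only one bag-pair at a time and keeps bags small. For the tree-width: we modify the given tree decomposition $(T,\V)$ by enlarging each bag $\V_t$ to $\V_t \cup \{\text{the two auxiliary $\ell$-blocks attached at $t$}\}$ (and, for an edge $st$ of $T$, possibly the propagation block shared by $s$ and $t$); the new bags have size $\le (\tw(G)+1) + 3\ell$, giving $\tw(\widehat G)\le\tw(G)+3\ell$, and one checks the three tree-decomposition axioms survive: every new (auxiliary) vertex sits in a contiguous subtree, and every new edge of $\widehat G$ — which joins auxiliary coordinates to original vertices of the same bag, or auxiliary coordinates across one tree edge — lies inside some enlarged bag. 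In the path case, the linear order of bags lets us reuse a single propagation block between consecutive bags instead of two, trimming $2p\ell\to p\ell$ and $3\ell\to 2\ell$; here one exploits $\tw(G)=\pw(G)$ so that the path decomposition is already width-optimal and no blow-up is hidden in re-pathing.

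The main obstacle I expect is getting the constants exactly right — in particular, designing the auxiliary-block layout so that (a) each added linear constraint is supported on a single enlarged bag (otherwise the new data matrices are not sparse with respect to $\widehat G$), and (b) no enlarged bag needs more than $3\ell$ (resp. $2\ell$) extra coordinates. This is a combinatorial design problem: a naive construction that stores $W^\top X_{\V_t,:}$ for all bags at once would couple distant bags and blow up the tree-width, so the propagation must be done edge-by-edge along $T$, which is exactly what forces the second $\ell$-block in the tree case and what can be economized away when $T$ is a path. Verifying the reverse feasibility direction — that the outer-product completion $\widehat X$ genuinely satisfies all inter-bag linking equalities simultaneously — is routine once the layout is fixed, but depends delicately on that layout.
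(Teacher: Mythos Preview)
Your overall strategy is correct and matches the paper's: attach $\ell$ auxiliary coordinates to each bag, use them to accumulate $W^\top R$ (where $X=RR^\top$) by a recursion along the tree, and then rewrite each $\langle A_i^l,X\rangle$ as an inner product supported on a single small auxiliary block. The two directions of the extension property in Definition~\ref{defi:ex} are verified essentially as you describe.

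However, your accounting for the constants has a real gap. You attribute the factor $2$ in $\hat n\le n+2p\ell$ to ``two $\ell$-blocks per bag'' and the $3\ell$ in the tree-width bound to two local blocks plus one shared propagation block. This breaks down when a node $t\in T$ has large degree: the recursion that accumulates the partial sum at $t$ from its children must reference the auxiliary blocks of \emph{all} children of $t$ in a single constraint, so if $t$ has $d$ children the enlarged bag must absorb $d\cdot\ell$ extra coordinates from them. No fixed number of per-bag blocks controls this.

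The paper resolves this by first \emph{binarizing} the tree decomposition: it replaces $(T,\V)$ by a binary tree decomposition of the same width via the splitting operation (Definition~\ref{splitop}, Corollary~\ref{splitcoro}). By Lemma~\ref{lem:split} this at most doubles the number of bags, and \emph{that} is where the factor $2$ comes from --- one $\ell$-block $U_t$ per bag, but now up to $2p$ bags. After binarization every node has at most two children, so the enlarged bag $\widetilde V_t=V_t\sqcup U_t\sqcup\bigsqcup_{j\in{\rm chd}(t)}U_j$ has size at most $(\tw(G)+1)+\ell+2\ell$, giving the $+3\ell$ bound. In the path case $T$ is already binary with $|{\rm chd}(t)|\le 1$, so no doubling occurs and each enlarged bag gains only $2\ell$ coordinates; this is the mechanism behind both improvements, not ``reusing a propagation block''.

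A smaller correction: the auxiliary block $U_t$ does not store ``$W^\top X$ restricted to $\V_t$'' but the partial sum $\sum_{s\in T_t}W_{W_s,:}^\top R_{W_s,:}$ over the subtree $T_t$ rooted at $t$, where $\{W_s\}$ is the partition $W_s=V_s\setminus V_{{\rm par}(s)}$ of $[n]$. Only at the root does this equal the full $W^\top R$, which is why the low-rank constraints can be read off from the root block alone.
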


Theorem~\ref{theo:spex} answers the Question~\ref{quesp} affirmatively by choosing $f(\tw(G),\ell)=\tw(G)+3\ell.$ This implies that an SDP problem with $(G,\ell)-$SPLR 
can be solved efficiently {(through chordal conversion of its sparse extension)} when both $\tw(G)$ and $\ell$ are small.


\subsection{Contribution 2: rank bound}\label{Intro-rank}
Apart from chordal conversion, another popular dimension reduction {technique} is low-rank decomposition, that is, factorizing $X$ into $RR^\top,$ where $R\in \R^{n\times r}$ for some $r\ll n.$ {In this case, (\ref{SDP}) is converted to the following
form:}

\begin{equation}\label{SDPLR} 
\min\left\{ \<A_0,RR^\top\>:\ \forall i\in [m],\ b_i^l\leq \<A_i,RR^\top\>\leq b_i^u,\ R\in \R^{n\times r} \right\}. \tag{SDPLR} 
\end{equation} 
This idea was first proposed by Burer and Monteiro in \cite{BM1,BM2} two decades ago and {has received} significant {attention} \cite{Boumal2,cifuentes2021on,tang2024feasible,tang2024solving,lee2022escaping,monteiro2024low} {since then}. 

Because the problem (\ref{SDPLR}) is only equivalent to (\ref{SDP}) when the latter has an optimal solution of rank at most $r,$ the rank bound of (\ref{SDP}) is an important research {topic}.  The most famous rank bound for (\ref{SDP}) is the Barvinok-Pataki bound \cite{barvinok1995problems,friedland1976subspaces,pataki1998on}, which states that (\ref{SDP}) has an optimal solution (if exists) of rank at most $\left\lfloor(\sqrt{8m+1}-1)/2\right\rfloor$. Despite of its wide usage, the Barvinok-Pataki bound is only related to the number of constraints while {ignoring} the structure of the data matrices. On the other hand, various data structures have been utilized to establish rank bounds for specific SDP problems  \cite{laurent2014new,madani2017finding,tang2024feasiblesqk}. One notable result is the Dancis theorem \cite{dancis1992positive}, which says that (\ref{SDP}) has an optimal solution of rank at most $\tw(G)+1$, which is smaller than the Barvinok-Pataki bound when $\tw(G)=\O(1)$. However, the Dancis theorem is not {useful} for SDP problems with SPLR-structure, {for which it will give the trivial bound of $n$} because of the {presence of} dense low-rank data matrices.

In Section~\ref{Sec:rank}, we will derive {new} rank bounds for SDP problems with SPLR structure. In order to state our result, we need the following technical definition.

\begin{defi}\label{cvxnb}
Define the mapping $\varphi:\N\rightarrow \N$ such that $\varphi(0):=0$ and for any $\ell\in \N,$ $\varphi(\ell)$ is the {\bf smallest} integer such that for any $U\in \R^{2\ell\times \ell}$ and $M\in \S^\ell,$ the following set is either empty or has an element of rank at most 
$\varphi(\ell):$  
\begin{equation}\label{cvxset}
\left\{ Y\in \S^{2\ell}_+:\ U^\top Y U=M,\ Y_{1:\ell,1:\ell}=I_\ell,\ Y_{\ell+1:2\ell,\ell+1:2\ell}=I_\ell \right\}.
\end{equation}
\end{defi}

\noindent{The structure (\ref{cvxset}) will appear in a rank reduction procedure in our later theoretical analysis. 
Now, we state our {new} rank bounds based on the tree-width {in the following theorem whose proof will be presented in Subsection~\ref{Subsec:pfthm}.}

\begin{theo}\label{ubthmtree}
Suppose the problem (\ref{SDP}) has $(G,\ell)-$SPLR structure for some $\ell\in \N$.
\begin{itemize}
\item [(i)] The problem (\ref{SDP}) has an optimal solution $X$ (if exists) such that $\rr(X)\leq \tw(G)+\varphi(\ell)+1.$
\item [(ii)] If $\tw(G)=\pw(G),$ then the problem (\ref{SDP}) has an optimal solution $X$ (if exists) such that $\rr(X)\leq \tw(G)+\ell+1.$
\end{itemize} 
\end{theo}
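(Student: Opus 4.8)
The plan is to combine the sparse extension of Theorem~\ref{theo:spex} with the Dancis theorem, and then undo the dimension increase caused by the auxiliary variables. First I would apply Theorem~\ref{theo:spex}: given the $(G,\ell)$-SPLR problem~\eqref{SDP} and a tree-decomposition $(T,\V)$ of $G$ realizing $\tw(G)$, pass to the sparse extension~\eqref{SDPex} whose sparsity pattern $\widehat G$ satisfies $\tw(\widehat G)\le\tw(G)+3\ell$ (and $\tw(G)+2\ell$ in the path-width case). Since~\eqref{SDPex} is a genuine sparse SDP, the Dancis theorem yields an optimal solution $\widehat X\succeq 0$ of~\eqref{SDPex} with $\rr(\widehat X)\le\tw(\widehat G)+1\le\tw(G)+3\ell+1$. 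By the definition of an extension (Definition~\ref{defi:ex}), $X:=\widehat X_{\I,\I}$ is optimal for~\eqref{SDP}, and $\rr(X)\le\rr(\widehat X)$ since $X$ is a principal submatrix. This already gives a bound of the shape $\tw(G)+3\ell+1$, but not the claimed $\tw(G)+\varphi(\ell)+1$, so a sharpening is needed.

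The sharpening is the heart of the argument and is where the function $\varphi$ enters. The extra $2\ell$ auxiliary coordinates of the extension come in two blocks of size $\ell$, say with index sets $J_1,J_2$, carrying internal constraints of the form $\widehat X_{J_1,J_1}=I_\ell$, $\widehat X_{J_2,J_2}=I_\ell$, together with a coupling constraint $U^\top \widehat X_{J_1\cup J_2,\,J_1\cup J_2}\,U=M$ linking them to the original block — exactly the configuration~\eqref{cvxset} of Definition~\ref{cvxnb}, after suitably identifying the $2\ell\times\ell$ matrix $U$ and $M\in\S^\ell$ from the data of the sparse extension. The idea is: starting from the Dancis-optimal $\widehat X$, freeze everything except the $2\ell\times 2\ell$ principal block on $J_1\cup J_2$, observe that this block ranges over a nonempty set of the form~\eqref{cvxset}, and replace it by an element of rank at most $\varphi(\ell)$ from that set. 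One must check that this local surgery keeps $\widehat X$ positive semidefinite and feasible — this is plausible because the auxiliary blocks interact with the rest of $\widehat X$ only through the frozen coupling data, but verifying it cleanly (e.g. via a Schur-complement / chordal PSD-completion argument, or by working block by block along the tree-decomposition) is the delicate part. After the surgery, re-examine the rank: the auxiliary part contributes at most $\varphi(\ell)$ to the rank while the "original" part of the matrix is controlled by $\tw(G)$, giving $\rr(\widehat X)\le\tw(G)+\varphi(\ell)+1$ and hence the same bound for $X=\widehat X_{\I,\I}$, proving~(i).

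For part~(ii), when $\tw(G)=\pw(G)$ the stronger half of Theorem~\ref{theo:spex} gives an extension adding only $p\cdot\ell$ coordinates in a \emph{single} block of size $\ell$ with $\tw(\widehat G)\le\tw(G)+2\ell$. Here the auxiliary block carries only a constraint like $\widehat X_{J,J}=I_\ell$ (together with linear couplings), which is the degenerate case $\varphi$ would be applied to with only one identity constraint; in this situation the relevant feasible set is an affine slice of a spectrahedron whose recession behavior forces an extreme point with the auxiliary block contributing rank at most $\ell$ outright, so no $\varphi$ is needed and the bound becomes $\tw(G)+\ell+1$. I would prove this by the same freeze-and-replace strategy but with the simpler set $\{Y\in\S^\ell_+:\ Y=I_\ell\}=\{I_\ell\}$ of rank exactly $\ell$, or more carefully by noting that the block is literally forced to be $I_\ell$ so the "new" rank contribution is at most $\ell$.

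The main obstacle I anticipate is the feasibility-preservation step of the local surgery: showing that substituting a low-rank element of~\eqref{cvxset} into the $J_1\cup J_2$ block of $\widehat X$ — while keeping all other entries fixed — produces a matrix that is still positive semidefinite and still satisfies every constraint of~\eqref{SDPex}. This requires understanding precisely how the sparse extension~\eqref{newprob} wires the auxiliary variables into the rest of the problem (which constraints touch $J_1,J_2$, and that they touch them only through $U$ and $M$), and then invoking a PSD-matrix-completion argument along $\widehat G$'s tree-decomposition to glue the modified block back in. The bookkeeping identifying $(U,M)$ with the extension's data, and confirming the set~\eqref{cvxset} is exactly the fiber over the frozen data, is the other place where care is needed.
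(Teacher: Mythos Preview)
Your high-level plan---pass to the sparse extension, apply a Dancis-type completion, then sharpen via a local rank reduction connected to $\varphi$---matches the paper's strategy, but your description of the sharpening step rests on a structural misunderstanding of the extension~\eqref{newprob} that would prevent the argument from going through.

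The extension does \emph{not} add only $2\ell$ auxiliary coordinates organized as two blocks $J_1,J_2$ with constraints $\widehat X_{J_i,J_i}=I_\ell$. It adds $k\ell$ coordinates (with $k=|T|$), one block $U_t$ of size $\ell$ for \emph{every} node $t$ of the tree-decomposition, and the only new constraints are $({\tilde{\bf a}}^{h,t})^\top \widetilde X_{\widetilde V_t,\widetilde V_t}\,{\tilde{\bf a}}^{h,t}=0$; no $I_\ell$ diagonal constraints appear anywhere. So there is no single $2\ell\times 2\ell$ block on which a one-shot surgery of the form~\eqref{cvxset} can be performed. Consequently your part~(ii) argument also breaks: applying Dancis to the path-case extension gives $\tw(G)+2\ell+1$, and the improvement to $\tw(G)+\ell+1$ does not come from any block being ``forced to be $I_\ell$''---it comes from the $\ell$ linearly independent null-vectors $\tilde{\bf a}^{1,t},\ldots,\tilde{\bf a}^{\ell,t}$ forcing $\rr\big(\widetilde X_{\widetilde V_t,\widetilde V_t}\big)\le |\widetilde V_t|-\ell\le \tw(G)+\ell+1$ for every $t$, after which the completion algorithm of \cite{jiang2017minimum} yields a global solution whose rank is bounded by the maximum block rank.

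For part~(i) the paper likewise works block by block over all $t\in[k]$: when $t$ has two children the block $\widetilde X_{\widetilde V_t,\widetilde V_t}$ has size up to $\tw(G)+3\ell+1$ and only $\ell$ null-vectors, so the naive bound is $\tw(G)+2\ell+1$. The tightening to $\tw(G)+\varphi(\ell)+1$ uses that within this block certain off-diagonal sub-blocks (those between $U_t$, $U_{j_1}$, $U_{j_2}$) are \emph{not} fixed by $\P_{\widetilde\E}$, giving a local degree of freedom with exactly the structure of Lemma~\ref{rankreduc}; a Schur-complement reduction then brings out the $3\ell\times 3\ell$ set of Lemma~\ref{223}, which is equivalent to~\eqref{cvxset}. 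The connection to $\varphi$ is therefore two reductions removed from the raw extension, not visible as two $I_\ell$-constrained blocks. After finding a low-rank $Z^t$ in each block, one checks consistency on overlaps (Claim~\ref{nonempclaim}) and glues via chordal completion.
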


The first rank bound in Theorem~\ref{ubthmtree} is not explicitly given 
in terms of $\ell$ due to the absence of a direct formula for $\varphi(\ell)$. Nonetheless, we can obtain {an} estimate of $\varphi(\ell)$ to achieve tightness within a factor of $\sqrt{3}$, {as stated in the following proposition with its proof presented in Subsection~\ref{Subsec:pfphi}.}

\begin{prop}\label{propphi}
For any $\ell\in \N^+,$ $\ell+1\leq \varphi(\ell)\leq \left\lfloor\(\sqrt{12\ell(\ell+1)+1}-1\)/2 \right\rfloor.$
\end{prop}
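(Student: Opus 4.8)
The plan is to establish the two bounds separately; the lower bound should be the easy direction and the upper bound the substantive one. For the lower bound $\varphi(\ell)\ge \ell+1$, I would exhibit a single pair $(U,M)$ for which the set in \eqref{cvxset} is nonempty but contains no element of rank $\le \ell$. A natural choice is to take $U = [I_\ell;\,-I_\ell]\in\R^{2\ell\times\ell}$ (so that $U^\top Y U = Y_{1:\ell,1:\ell}-Y_{1:\ell,\ell+1:2\ell}-Y_{\ell+1:2\ell,1:\ell}+Y_{\ell+1:2\ell,\ell+1:2\ell}$) and $M = 4I_\ell$ or $M=0$. With $Y$ written in block form $\begin{bmatrix}I_\ell & Z\\ Z^\top & I_\ell\end{bmatrix}$, the diagonal-block constraints already force the diagonal blocks to be identity; the constraint $U^\top Y U = M$ becomes an affine condition on $Z+Z^\top$. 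Choosing $M$ so that this forces $Z = -I_\ell$ (hence $Y = \begin{bmatrix}I_\ell & -I_\ell\\ -I_\ell & I_\ell\end{bmatrix}$, the unique feasible point) gives $\rr(Y)=\ell$; a slight perturbation of the target $M$ should instead force $Z+Z^\top = cI_\ell$ for $c\in(-2,2)$, making every feasible $Y$ have all eigenvalues of the form $1\pm\sigma_i(Z)$ strictly positive, hence $\rr(Y)=2\ell$ — but that overshoots. The cleanest route to exactly $\ell+1$ is to pick $M$ realized only by matrices $Z$ with $Z+Z^\top$ having a one-dimensional negative-curvature direction relative to the boundary, so that the minimal rank across the (affine slice of the) feasible set is $\ell+1$; concretely, arranging that the feasible $Z$ are exactly the orthogonal matrices $Q$ with $Q+Q^\top$ equal to a fixed matrix of signature $(\ell-1,1)$ yields $Y=\begin{bmatrix}I&Q\\Q^\top&I\end{bmatrix}$ of rank exactly $\ell+1$ for all of them.

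For the upper bound, I would invoke the Barvinok--Pataki / Dancis-type rank reduction machinery already cited in the paper (\cite{barvinok1995problems,pataki1998on}, and in spirit the argument behind \cite{dancis1992positive}), applied to the spectrahedron \eqref{cvxset} itself. The feasible set is $\{Y\in\S^{2\ell}_+ : \langle C_k, Y\rangle = d_k,\ k=1,\dots,N\}$ where the linear constraints are: the $\binom{\ell}{2}+\ell$ entry-constraints fixing each diagonal block to $I_\ell$, plus the $\binom{\ell+1}{2}$ constraints encoding $U^\top Y U = M$ (symmetric $\ell\times\ell$). So $N = 2\cdot\frac{\ell(\ell+1)}{2} + \frac{\ell(\ell+1)}{2} = \tfrac{3\ell(\ell+1)}{2}$. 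The Barvinok--Pataki bound then says a nonempty such spectrahedron has an extreme point $Y$ with $\binom{\rr(Y)+1}{2}\le N$, i.e. $\rr(Y)\big(\rr(Y)+1\big)\le 3\ell(\ell+1)$, which solves to $\rr(Y)\le \big(\sqrt{12\ell(\ell+1)+1}-1\big)/2$; taking the floor gives exactly the claimed upper bound. The only care needed here is to confirm the count $N$ — i.e. that we really do impose $\tfrac{3\ell(\ell+1)}{2}$ independent affine conditions and not more — and to note that the Barvinok--Pataki argument only needs an upper bound on $N$, so dependent constraints only help.

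The step I expect to be the main obstacle is pinning down the lower bound to be exactly $\ell+1$ rather than merely $\ge \ell$ or the trivial $\ge 1$: the family of feasible $Y$ is a ``rotated'' object (an affine slice of a set of orthogonal-matrix blocks), and one must show simultaneously that (a) the slice is nonempty for the chosen $M$ and (b) no feasible $Y$ in it has rank $\le \ell$. Part (b) amounts to checking that the $2\ell\times 2\ell$ block matrix $\begin{bmatrix}I_\ell&Z\\Z^\top&I_\ell\end{bmatrix}$ has rank $\le \ell$ only when $Z$ is orthogonal with $Z^\top = Z^{-1}$ arranged so $I - Z^\top Z$ vanishes, and then showing the constraint $U^\top Y U = M$ is incompatible with full rank-$\ell$ collapse unless $M$ is the degenerate value — so one must choose $M$ just barely off that degenerate value. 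I would handle this by an explicit $2\times 2$-block eigenvalue computation: $\operatorname{rank}\begin{bmatrix}I&Z\\Z^\top&I\end{bmatrix} = \ell + \operatorname{rank}(I - Z^\top Z) = \ell + \operatorname{rank}(I-Z Z^\top)$, reducing everything to linear algebra on $Z$, after which selecting $M$ to force $\operatorname{rank}(I-Z^\top Z)\ge 1$ on the whole feasible slice finishes the lower bound. If a fully explicit $(U,M)$ proves fiddly, a fallback is a dimension-counting / genericity argument showing the generic fibre of the map $Y\mapsto (U^\top Y U, \text{diag blocks})$ restricted to rank-$\ell$ matrices $Y$ is lower-dimensional than the target, hence a suitable $M$ is missed by all rank-$\le\ell$ feasible points.
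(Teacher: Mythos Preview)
Your upper bound is correct and identical to the paper's: count $N=3\ell(\ell+1)/2$ scalar equations and invoke Barvinok--Pataki.

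For the lower bound you have the right scaffolding (write $Y=\begin{bmatrix}I_\ell&Z\\Z^\top&I_\ell\end{bmatrix}$, use $\rr(Y)=\ell+\rr(I-Z^\top Z)$, and note $\|Z\|_2\le 1$), but the concrete construction you sketch does not work. In particular, if the feasible $Z$ are orthogonal matrices then $I-Z^\top Z=0$ and $\rr(Y)=\ell$, not $\ell+1$; so ``arranging that the feasible $Z$ are exactly the orthogonal matrices $Q$ \dots\ yields rank exactly $\ell+1$'' is false. Your other attempts either hit rank $\ell$ or overshoot to $2\ell$, and the genericity fallback would still leave you needing to produce one explicit $M$ that is attained by some feasible $Y$ but by no rank-$\le\ell$ feasible $Y$.

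The paper's construction supplies the missing idea. Take $U=[I_\ell;I_\ell]$ and $M=\dd(4,\dots,4,3)$, so the constraint becomes $Z+Z^\top=\dd(2,\dots,2,1)$. Thus $Z=\dd(1,\dots,1,\tfrac12)+\Omega$ with $\Omega$ skew-symmetric. The point is that $\ell-1$ diagonal entries sit exactly on the boundary value $1$ while one sits strictly inside at $\tfrac12$. One then checks that any nonzero $\Omega$ forces some $2\times2$ principal submatrix of $Z$ to have the form $\begin{bmatrix}1&\beta\\-\beta&\alpha\end{bmatrix}$ with $\alpha\in\{1,\tfrac12\}$, $\beta\ne0$, whose larger singular value exceeds $1$; this contradicts $\|Z\|_2\le1$. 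Hence $\Omega=0$, the feasible set is the singleton $\{Z=\dd(1,\dots,1,\tfrac12)\}$, and $\rr(I-Z^\top Z)=1$, giving $\rr(Y)=\ell+1$. The mechanism you were missing is this ``boundary-plus-one-interior'' choice of $M$ together with the $2\times2$ singular-value argument that pins $\Omega$ to zero.
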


{The following theorem shows the tightness of the bounds in Theorem~\ref{ubthmtree}, whose proof will be presented in Subsection~\ref{Subsec:pflb}.}

\begin{theo}\label{lbthmtree} 
{Consider} any $\omega,\ell,n\in \N^+.$
\begin{itemize}
\item [(i)] If $n\geq \omega+\ell+1,$ then there exists a problem (\ref{SDP}) with $(G,\ell)-$SPLR structure such that $\pw(G)=\tw(G)=\omega$ and the rank of any optimal solution is at least $\tw(G)+\ell+1$.
\item [(ii)] If $\omega\geq 3\ell-1$ and $n\geq \omega+3\ell+1$, then there exists a problem (\ref{SDP}) with $(G,\ell)-$SPLR structure such that $\tw(G)=\omega$ and the rank of any optimal solution is at least $\tw(G)+\varphi(\ell)+1$.
\end{itemize}
\end{theo}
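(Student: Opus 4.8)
The plan is to construct explicit worst-case instances by ``stitching together'' two ingredients: a sparse block that forces rank $\tw(G)$ through the Dancis mechanism, and a low-rank block that forces the extra $\ell+1$ (resp.\ $\varphi(\ell)+1$) via the structure in \eqref{cvxset}. First I would recall the standard tightness example for the Dancis bound: for a graph $G$ whose tree-width is $\omega$, one can pick data matrices supported on $\S^\omega(\E,0)$ so that the SDP \eqref{SDP} reduces, on the relevant clique, to a single constraint forcing an $\omega\times\omega$ positive definite (hence full-rank) block — for instance, a chordal path/band structure whose unique feasible point on each maximal clique is the identity. This yields a contribution of exactly $\tw(G)=\omega$ to the rank, and needs $n\ge\omega$ plus room for the remaining coordinates.

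For part (i), I would take $G$ to be a graph on $n$ vertices that is the disjoint-vertex union (or a suitable clique-sum) of the Dancis-tight path gadget on $\omega$ vertices with a separate low-rank gadget on $\ell+1$ vertices, padded so that $\pw(G)=\tw(G)=\omega$; this is possible precisely when $n\ge\omega+\ell+1$, matching the hypothesis. On the $\ell+1$ extra coordinates I would install $\ell$ dense rank-one data matrices $A^l_i$ together with constraints forcing the corresponding principal $(\ell+1)\times(\ell+1)$ block to be the all-ones matrix $J_{\ell+1}$ scaled appropriately, or more precisely a matrix that is forced to have rank exactly $\ell+1$ (e.g.\ a strictly diagonally dominant matrix pinned entrywise); the key point is that the low-rank constraints $\langle A^l_i,X\rangle$ and finitely many sparse ones can jointly pin down a full-rank $(\ell+1)$-block while each $A^l_i$ lies outside $\S^n(\E,0)$. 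Combining the two independent blocks, every feasible (hence every optimal) $X$ has rank at least $\omega+(\ell+1)=\tw(G)+\ell+1$, and the SPLR decomposition is immediate since all dense pieces lie in the span of the $A^l_i$, giving $\rr([A^l_0,\dots,A^l_m])\le\ell$.

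For part (ii), the low-rank gadget must be upgraded so that it forces rank $\varphi(\ell)+1$ rather than $\ell+1$. Here I would invoke Definition~\ref{cvxnb} directly: by minimality of $\varphi(\ell)$, there exist $U\in\R^{2\ell\times\ell}$ and $M\in\S^\ell$ for which the set \eqref{cvxset} is nonempty but contains \emph{no} element of rank $\le\varphi(\ell)-1$. I would transplant this set into the SDP by using $2\ell+$(extra) coordinates: the constraints $Y_{1:\ell,1:\ell}=I_\ell$, $Y_{\ell+1:2\ell,\ell+1:2\ell}=I_\ell$ are sparse (diagonal-block, hence expressible with matrices in $\S^n(\E,0)$ after choosing $\E$ to contain the two $\ell$-cliques), while $U^\top YU=M$ contributes the dense low-rank constraints — the matrices $U_{:,j}U_{:,k}^\top$ span a space of rank $\le 2\ell$, but by choosing $\E$ to also include a clique on $2\ell$ vertices they can be absorbed appropriately; the genuinely dense (rank-$\ell$) part is what remains after projecting onto $\S^n(\E,0)$, so the SPLR rank is at most $\ell$. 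The tree-width bookkeeping — choosing $\E$ so that $\tw(G)=\omega$ exactly, which forces the clique sizes and explains the hypotheses $\omega\ge 3\ell-1$ and $n\ge\omega+3\ell+1$ (one needs a $(3\ell)$-ish block for the gadget plus an $\omega$-block for the Dancis part, overlapping in a controlled way) — is the part I expect to be fussiest.

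The main obstacle, then, is not the rank lower bound itself (which follows by construction and from $\varphi$'s minimality) but verifying the two graph-theoretic side conditions simultaneously: that the constructed $G$ has tree-width (and, in (i), path-width) \emph{equal} to $\omega$ — not merely $\le\omega$ — and that the dense data matrices, after subtracting their $\P_\E$-projections, still have combined rank exactly $\le\ell$ so that the instance is genuinely $(G,\ell)$-SPLR. I would handle the tree-width equality by exhibiting an explicit tree-decomposition of width $\omega$ and a $K_{\omega+1}$-minor (or a bramble) certifying the lower bound, and handle the SPLR accounting by writing each dense $A_i$ explicitly as (clique-supported sparse part) $+$ (a linear combination of $\ell$ fixed rank-one matrices).
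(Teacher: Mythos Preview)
Your overall strategy---build a sparse block that pins rank and a low-rank gadget, then combine---matches the paper's, but there are two genuine gaps.

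First, for both parts, the claim that combining two blocks on disjoint vertex sets yields a rank lower bound equal to the \emph{sum} of the individual bounds is false. If only $X_{AA}$ and $X_{BB}$ are constrained and the off-diagonal $X_{AB}$ is free, then any factorizations $X_{AA}=R_AR_A^\top$, $X_{BB}=R_BR_B^\top$ with $R_A,R_B$ having $r=\max(\rr X_{AA},\rr X_{BB})$ columns give a feasible $X=[R_A;R_B][R_A;R_B]^\top$ of rank at most $r$. The paper handles this via Lemma~\ref{extlem}: it adds the explicit constraint $X_{n+1:n+\sigma,1:n}=0$, which forces block-diagonality and hence rank additivity, but this places all cross-edges into the sparsity pattern and increases the tree-width by exactly $\sigma$. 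Thus the extension preserves the quantity $(\text{min rank})-(\text{tree-width})$; it does not let you grow tree-width and rank independently, as your ``disjoint-union'' picture suggests.

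Second, and more seriously for part~(ii): once one uses the correct extension mechanism, the invariant that matters is this gap $(\text{min rank})-\tw(G)$. Your $2\ell$-dimensional gadget taken straight from Definition~\ref{cvxnb} has tree-width $\ell-1$ (the two $I_\ell$ constraints already force two $\ell$-cliques in $\E$) and min rank $\varphi(\ell)$, so its gap is only $\varphi(\ell)-\ell+1$; the theorem requires a gap of $\varphi(\ell)+1$. The paper closes this deficit by first reformulating $\varphi(\ell)$ via a $3\ell$-dimensional set (Lemma~\ref{223}) and then embedding it in a $6\ell$-dimensional SDP \eqref{exSDP6pro} whose sparsity pattern has tree-width $3\ell-1$ but every feasible point has rank at least $3\ell+\varphi(\ell)$: the constraint $X_{1:3\ell,1:3\ell}=I_{3\ell}$ contributes $3\ell$ to the rank, while a Schur-complement argument forces the remaining $3\ell\times 3\ell$ block into the set of Lemma~\ref{223}, contributing $\varphi(\ell)$ more. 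This two-layer construction is the missing idea, and it is exactly why the hypothesis $\omega\ge 3\ell-1$ appears.
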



\subsection{Outline of the paper}
In Section~\ref{Sec-twpw}, we will introduce some essential background material
on the tree-width and path-width of graphs. In Section~\ref{Sec:SET}, we will present the detailed procedure of our sparse extension and prove Theorem~\ref{theo:spex}. In Section~\ref{Sec:rank}, we will prove the theorems about the rank bounds stated in Subsection~\ref{Intro-rank}. In Section~\ref{Sec-conc}, we give a brief conclusion. In Appendix~\ref{app-illus}, we present some figures to illustrate some graph operations used {in the sparse extension of Section~\ref{Sec:SET}}. In Appendix~\ref{app-useful} and Appendix~\ref{app-useful-1}, we state and prove some results that are used in our theoretical analysis {in Section~\ref{Sec:rank}}.


\section{Preliminaries on tree-width, path-width and binary tree}\label{Sec-twpw}
Consider $G=\([n],\E\).$ $G$ is called a chordal graph if every cycle of more than 3 vertices contains a chord. A clique of $G$ is a complete subgraph. A maximal clique of $G$ is a clique that is not properly contained in any other clique. See Figure~\ref{fig:chordal graph} for an example of {a} chordal graph and its maximal cliques. We will use the following definition of tree decomposition.

\begin{defi}\label{treedefi}\cite[section 12.3]{diestel2005graph}
Consider a graph $G=([n],\E).$ Let $T$ be a tree and $\V=(V_t)_{t\in T}$ be a family of vertex sets $V_t\subset [n]$ indexed by the vertices of $T.$ The pair $(T,\V)$ is called a tree decomposition of $G$ if it satisfies the following three conditions:
\begin{itemize}
\item [(i)] (Vertex cover) $[n]=\cup_{t\in T}V_t$.
\item [(ii)] (Edge cover) For any $ij\in \E,$ there exists $t\in T$ such that $i,j\in V_t.$
\item [(iii)] (Running intersection) Suppose $t_1,t_2,t_3\in T$ and $t_2$ lies on the unique path connecting $t_1$ and $t_3,$ then $V_{t_1}\cap V_{t_3}\subset V_{t_2}.$
\end{itemize}
The width of the tree decomposition {${\rm wid}(T,\V)$} is equal to $\max_{t\in T}\{|V_t|\}-1.$ The {{\em tree-width}} $\tw(G)$ of $G$ is the minimum width among all its tree decompositions. If for all $t\in T,$ $V_t$ is a clique in $G,$ we call $(T,\V)$ a {\bf clique tree decomposition}. 
\end{defi}

\begin{figure}[htb]       
\centering
\tikzset{every picture/.style={line width=0.75pt}} 

\begin{tikzpicture}[x=0.75pt,y=0.75pt,yscale=-1,xscale=1]

\draw   (122.47,203.02) -- (249.42,203.02) -- (249.42,260.08) -- (122.47,260.08) -- cycle ;
\draw   (153.93,174.07) -- (185.38,203.02) -- (122.47,203.02) -- cycle ;
\draw   (217.97,174.07) -- (249.42,203.02) -- (185.38,203.02) -- cycle ;
\draw   (281.43,231.64) -- (249.4,260.08) -- (249.4,203.02) -- cycle ;
\draw   (90.02,231.05) -- (122.47,203.02) -- (122.47,260.08) -- cycle ;
\draw   (154.16,288.81) -- (122.47,260.08) -- (185.38,260.08) -- cycle ;
\draw   (217.07,288.39) -- (185.38,260.08) -- (249.42,260.08) -- cycle ;
\draw    (122.47,203.02) -- (185.38,260.08) ;
\draw    (122.47,260.08) -- (185.38,203.02) ;
\draw    (185.38,203.02) -- (249.42,260.08) ;
\draw    (185.38,259.67) -- (249.4,203.22) ;
\draw    (185.38,203.02) -- (185.38,259.67) ;

\draw (120.4,187.69) node [anchor=north west][inner sep=0.75pt]   [align=left] {{\tiny 1}};
\draw (183.99,187.69) node [anchor=north west][inner sep=0.75pt]   [align=left] {{\tiny 2}};
\draw (248.03,187.69) node [anchor=north west][inner sep=0.75pt]   [align=left] {{\tiny 3}};
\draw (247.58,269.62) node [anchor=north west][inner sep=0.75pt]   [align=left] {{\tiny 4}};
\draw (183.99,270.44) node [anchor=north west][inner sep=0.75pt]   [align=left] {{\tiny 5}};
\draw (120.85,269.62) node [anchor=north west][inner sep=0.75pt]   [align=left] {{\tiny 6}};
\draw (80.26,228.86) node [anchor=north west][inner sep=0.75pt]   [align=left] {{\tiny 7}};
\draw (151.52,162.42) node [anchor=north west][inner sep=0.75pt]   [align=left] {{\tiny 8}};
\draw (217.36,162.42) node [anchor=north west][inner sep=0.75pt]   [align=left] {{\tiny 9}};
\draw (286.82,228.86) node [anchor=north west][inner sep=0.75pt]   [align=left] {{\tiny 10}};
\draw (212.01,292.71) node [anchor=north west][inner sep=0.75pt]   [align=left] {{\tiny 11}};
\draw (147.52,292.71) node [anchor=north west][inner sep=0.75pt]   [align=left] {{\tiny 12}};
\end{tikzpicture}
\caption{Chordal graph $G=([12],\E)$ with $\tw(G)=3$. The maximal cliques are $\{1,6,7\},\{1,2,8\},\{2,3,9\},\{3,4,10\},\{4,5,11\},\{5,6,12\},\{1,2,5,6\},\{2,3,4,5\}.$}
\label{fig:chordal graph}
\end{figure}
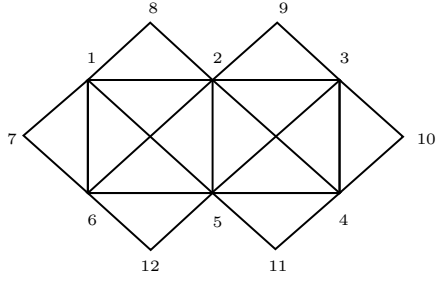

\begin{rem}\label{treerem}
Here we list some results concerning tree decomposition and tree-width. Readers can find them in subsections 12.3 and 12.4 of \cite{diestel2005graph} and section 3 of \cite{vandenberghe2015chordal}.
\begin{itemize}
\item [(1)] A graph $G$ is a chordal graph if and only if it has a clique tree decomposition.
\item [(2)] If $G$ is a chordal graph, then $\tw(G)$ is the size of its maximum clique minus 1.
\item [(3)] For a chordal graph $G=([n],\E),$ its {\bf clique tree} \cite[section 4]{blair1993introduction} is a clique tree decomposition $(T,\V)$ such that $\V$ is the set of maximal cliques. The width of its clique tree is equal to $\tw(G)$. See Figure~\ref{fig:clique tree} for the clique tree of the chordal graph in Figure~\ref{fig:chordal graph}.
\item [(4)] For any graph $G=([n],\E),$ there exists a chordal completion $G'=\([n],\E'\)$ such that $G'$ is a chordal graph, $\E\subset \E'$ and $\tw\(G'\)=\tw\(G\).$
\end{itemize}
\end{rem}

The definitions of path decomposition and path-width \cite{robertson1983graph} are as follows.

\begin{defi}\label{pathdefi}
Consider a graph $G=([n],\E).$ A tree decomposition $(T,\V:=\{V_t:\ t\in T\})$ is called a path decomposition if $T$ is a path. The {{\em path-width}} $\pw(G)$ of $G$ is the minimum width among all its path decompositions. If for any $t\in T,$
 $V_t$ is a clique in $G,$ we call $(T,\V)$ a {\bf clique path decomposition}.
\end{defi}

\begin{rem}\label{pathrem}
Because path decomposition is a special case of tree decomposition, we have that $\pw(G)\geq \tw(G).$ Similar to the tree-width, for any graph $G=([n],\E),$ there exists a chordal completion $G'=([n],\E')$ such that $\E\subset \E'$ and $\pw(G)=\pw(G').$
\end{rem}

\begin{figure}
\centering
\tikzset{every picture/.style={line width=0.75pt}} 

\begin{tikzpicture}[x=0.75pt,y=0.75pt,yscale=-1,xscale=1]

\draw    (116,190.25) -- (132.5,190.25) ;
\draw    (182,189.75) -- (207,190.25) ;
\draw    (260.5,189.75) -- (280.5,189.75) ;
\draw    (147,160.75) -- (147,178.75) ;
\draw    (230,160.75) -- (230,180.75) ;
\draw    (147,196.25) -- (147,214.25) ;
\draw    (230.5,196.25) -- (230.5,214.75) ;

\draw (129,179.5) node [anchor=north west][inner sep=0.75pt]   [align=left] {\{1,2,5,6\}};
\draw (204.5,179.5) node [anchor=north west][inner sep=0.75pt]   [align=left] {\{2,3,4,5\}};
\draw (129,143.5) node [anchor=north west][inner sep=0.75pt]   [align=left] {\{1,2,8\}};
\draw (73.5,180.5) node [anchor=north west][inner sep=0.75pt]   [align=left] {\{1,6,7\}};
\draw (130,214) node [anchor=north west][inner sep=0.75pt]   [align=left] {\{5,6,12\}};
\draw (211,214.5) node [anchor=north west][inner sep=0.75pt]   [align=left] {\{4,5,11\}};
\draw (277.5,180) node [anchor=north west][inner sep=0.75pt]   [align=left] {\{3,4,10\}};
\draw (214,143) node [anchor=north west][inner sep=0.75pt]   [align=left] {\{2,3,9\}};

\end{tikzpicture}
\caption{Clique tree of the chordal graph in Figure~\ref{fig:chordal graph}.}
\label{fig:clique tree}
\end{figure}
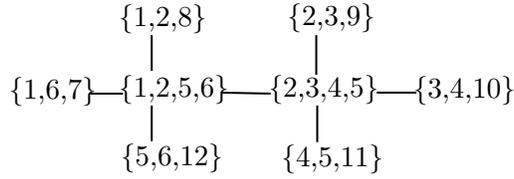

We now introduce a special tree decomposition that will be used in our proof of Theorem~\ref{ubthmtree}.

\begin{defi}\label{combdefi}
Consider a graph $G=([n],\E){.}$ {We} call a tree decomposition $(T,\V)$ {of $G$} a {\bf binary tree decomposition} if the maximum degree of $T$ is at most $3.$ Furthermore, we call it a {\bf binary clique tree decomposition} if it is {also} a clique tree decomposition.
\end{defi}

We call such a tree decomposition $(T,\mathcal{V})$ binary because a tree $T$ can be partially ordered as a rooted binary tree if and only if its maximum degree is at most 3. Such a partial ordering can be obtained by first designating a vertex $z\in T$ of degree less than 3 as the root.
For any edge $xy$ in the edge set $\E(T)$ of the tree decomposition, we call $x$ a child of $y$ {(and $y$ the parent of $x$)} if $y$ is contained in the unique path from $x$ to $z$. Because the degree of the root $z$ is less than 3, every vertex $x\in T$ of degree 3 is not the root and has a parent. Thus, $x$ has at most two children, so $T$ is a binary tree.

Now we will show that each graph $G=\([n],\E\)$ has a binary tree decomposition whose width equals $\tw(G)$. We first define an operation on its tree decomposition that won't influence its width but reduce the number of high-degree vertices in its tree decomposition. 

\begin{defi}\label{splitop}
Consider a graph $G=([n],\E)$ and a corresponding tree decomposition $\(T,\V:=\{V_t:\ t\in T\}\).$ For any $x\in T$ with degree $k\geq 4,$ let its neighbourhood be $\{y_1,y_2,\ldots,y_k\}\subset T.$ We define a new tree decomposition\footnote{We will show that it is indeed a tree decomposition later.} $\(T',\V':=\{V'_t:\ t\in T'\}\)$ of $G,$ which is obtained by splitting $x$ into a path of $k$ vertices. In detail, $T'$ has vertex set $\(T\setminus\{x\}\)\sqcup\left\{ x_1,x_2,\ldots,x_k \right\}$ and edge set $$\E\(T'\):=\(\E(T)\setminus\{xy_1,xy_2,\ldots,xy_k\}\) \sqcup \left\{ x_1y_1,x_2y_2,\ldots,x_ky_k,x_1x_2,x_2x_3,\ldots,x_{k-1}x_k \right\}.$$ Moreover, for any $i\in [k],$ let $V'_{x_i}:=V_x.$ For any $y\in T'\setminus\left\{ x_1,x_2,\ldots, x_k \right\},$ let $V'_y=V_y.$ We call $(T',\V')$ a\footnote{The splitting of a tree decomposition with respect to the same vertex $x$ is not unique, because for every neighbour $y_i,$ the vertex in $\{x_1,x_2,\ldots,x_k\}$ such that $y_i$ is connected to is not unique. Different connections might result in different tree decompositions. } splitting of $(T,\V)$ with respect to $x.$ 
\end{defi}
Figure~\ref{fig:split} illustrates the splitting operation with respect to a vertex of degree 4 {and} Figure~\ref{fig:split1} {illustrates} a splitting of the clique tree in Figure~\ref{fig:clique tree} with respect to $\{1,2,5,6\}$. The following {lemma} shows that the splitting operation results in another tree decomposition with the same width and fewer high-degree vertices.

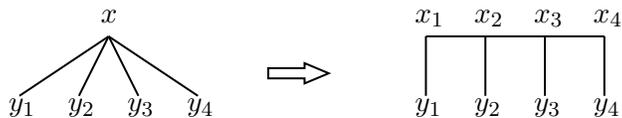
\begin{figure}
\centering
\begin{tikzpicture}[x=0.75pt,y=0.75pt,yscale=-1,xscale=1]

\draw    (140,190) -- (95,220) ;
\draw    (140,190) -- (125,220) ;
\draw    (140,190) -- (155,220) ;
\draw    (140,190) -- (185,220) ;
\draw    (300,190) -- (330,190) ;

\draw    (330,190) -- (360,190) ;

\draw    (360,190) -- (390,190) ;

\draw    (300,190) -- (300,220) ;

\draw    (330,190) -- (330,220) ;

\draw    (360,190) -- (360,220) ;

\draw    (390,190) -- (390,220) ;

\draw   (220.5,206.06) -- (238.8,206.06) -- (238.8,203.5) -- (251,208.63) -- (238.8,213.75) -- (238.8,211.19) -- (220.5,211.19) -- cycle ;
\draw (134.5,175) node [anchor=north west][inner sep=0.75pt]   [align=left] {$x$};
\draw (88,220) node [anchor=north west][inner sep=0.75pt]   [align=left] {$y_1$};
\draw (118,220) node [anchor=north west][inner sep=0.75pt]   [align=left] {$y_2$};
\draw (148,220) node [anchor=north west][inner sep=0.75pt]   [align=left] {$y_3$};
\draw (178,220) node [anchor=north west][inner sep=0.75pt]   [align=left] {$y_4$};
\draw (293,175) node [anchor=north west][inner sep=0.75pt]   [align=left] {$x_1$};
\draw (323,175) node [anchor=north west][inner sep=0.75pt]   [align=left] {$x_2$};
\draw (353,175) node [anchor=north west][inner sep=0.75pt]   [align=left] {$x_3$};
\draw (383,175) node [anchor=north west][inner sep=0.75pt]   [align=left] {$x_4$};
\draw (293,220) node [anchor=north west][inner sep=0.75pt]   [align=left] {$y_1$};
\draw (323,220) node [anchor=north west][inner sep=0.75pt]   [align=left] {$y_2$};
\draw (353,220) node [anchor=north west][inner sep=0.75pt]   [align=left] {$y_3$};
\draw (383,220) node [anchor=north west][inner sep=0.75pt]   [align=left] {$y_4$};
\end{tikzpicture}
\caption{Splitting vertex $x$ with degree 4 into a path with four vertices $x_1,x_2,x_3,x_4.$ Each of its neighbours is connected to one vertex on the path.}
\label{fig:split}
\end{figure}

\begin{lem}\label{splitprop}
Consider a graph $G=([n],\E)$ and a corresponding tree decomposition $(T,\V:=\{V_t\:\ t\in T\}).$ For any $x\in T$ with degree $k\geq 4,$ let $(T',\V')$ be a splitting of $(T,\V)$ with respect to $x.$ Then $(T',\V')$ is a tree decomposition of $G$ such that {${\rm wid}\(T',\V'\)={\rm wid}(T,\V).$} Moreover, we have that the number of vertices in $T'$ of degree greater than $3$ is that of $T$ minus 1.
\end{lem}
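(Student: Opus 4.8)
The plan is to verify the three conditions of Definition~\ref{treedefi} for $(T',\V')$, then check the width equality and the degree count.

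First I would establish that $T'$ is a tree: it is obtained from $T$ by deleting $x$ (which disconnects $T$ into subtrees hanging off $y_1,\ldots,y_k$) and then reconnecting these subtrees via the path $x_1x_2\cdots x_k$, with $y_i$ attached to $x_i$. Since the original $T$ was connected and acyclic, replacing the single branch vertex $x$ by a path reattached consistently keeps $T'$ connected and acyclic; a clean way to see this is that $T'$ has $|T|+k-1$ vertices and $|\E(T)|+k-1 = (|T|-1)+k-1 = |T'|-1$ edges, and it is connected, hence a tree.

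Next, the three tree-decomposition axioms. Vertex cover is immediate since $\bigcup_t V'_t \supseteq \bigcup_{t\neq x} V_t \cup V'_{x_1} = \bigcup_t V_t = [n]$. Edge cover: any edge $ij\in\E$ was covered by some bag $V_t$ in $(T,\V)$; if $t\neq x$ then $V'_t = V_t$ still covers it, and if $t=x$ then $V'_{x_1}=V_x$ covers it. The real content is the running-intersection property. Here I would argue: take $t_1,t_3\in T'$ with $t_2$ on the path between them; I want $V'_{t_1}\cap V'_{t_3}\subseteq V'_{t_2}$. The key observation is that all the new bags $V'_{x_1},\ldots,V'_{x_k}$ equal $V_x$, so the path $x_1\cdots x_k$ is "transparent'' — any vertex $v\in V_x$ lies in every bag along it. For a general pair $t_1,t_3$, map their positions back to $T$: each $x_i$ maps to $x$, each other vertex maps to itself, and the path in $T'$ from $t_1$ to $t_3$ projects onto the path in $T$ from $\pi(t_1)$ to $\pi(t_3)$ (possibly with a segment collapsed at $x$). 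If $t_2$ is some $x_i$, then $V'_{t_2}=V_x$; one checks that the $T$-path from $\pi(t_1)$ to $\pi(t_3)$ passes through $x$, so $V_{\pi(t_1)}\cap V_{\pi(t_3)}\subseteq V_x$ by running intersection in $(T,\V)$, and since $V'_{t_i}\subseteq V_{\pi(t_i)}$ the claim follows. If $t_2\neq x_i$ for all $i$, then $\pi(t_2)\neq x$ lies on the $T$-path between $\pi(t_1)$ and $\pi(t_3)$, and $V'_{t_2}=V_{\pi(t_2)}$, so again we are done by running intersection in $(T,\V)$ — using that $V'_{t_j}\subseteq V_{\pi(t_j)}$ with equality unless $t_j$ is some $x_i$, in which case $V'_{t_j}=V_x\subseteq V_{\pi(t_j)}=V_x$ still.

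Finally, the width is unchanged because the multiset of bag sizes only gains $k$ extra copies of $|V_x|$, which does not change the maximum; hence $\max_t|V'_t|-1 = \max_t|V_t|-1$, i.e.\ ${\rm wid}(T',\V')={\rm wid}(T,\V)$. For the degree count: in $T'$, each $x_i$ has degree exactly $3$ except the endpoints $x_1,x_k$ which have degree $2$ (one path-edge plus the edge to $y_i$), so none of the new vertices has degree $>3$; every other vertex $y\in T'\setminus\{x_1,\ldots,x_k\}$ retains its degree from $T$ (the edge $xy_i$ is simply replaced by $x_iy_i$); and the old high-degree vertex $x$ is gone. Thus the number of vertices of degree $>3$ drops by exactly one. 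I expect the running-intersection verification to be the main obstacle, mostly bookkeeping around the projection $\pi:T'\to T$ and the fact that the new bags are all equal to $V_x$; everything else is routine.
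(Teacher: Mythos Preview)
Your proposal is correct and follows essentially the same approach as the paper: the paper's proof is a two-line sketch relying on the single observation that all the new bags $V'_{x_1},\ldots,V'_{x_k}$ equal $V_x$, and you have simply filled in the details (tree-ness via an edge count, the running-intersection case analysis via the projection $\pi:T'\to T$, and the degree bookkeeping). Two harmless slips: you actually have $V'_t = V_{\pi(t)}$ with equality (not just $\subseteq$), which makes your argument cleaner, and the multiset of bag sizes gains $k-1$ net copies of $|V_x|$ rather than $k$, though of course the maximum is unchanged either way.
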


\begin{proof}
First, it is easy to verify that the vertex cover, edge cover and running intersection properties are all satisfied for $T'$ because all the newly added vertices in $T'$ have the same vertex set in $G$. Also, the splitting procedure doesn't change the width of the tree decomposition because the vertex sets in $\V'$ have the same sizes as before. After a splitting, the original vertex in $T$ of degree $>3$ is transferred to several vertices with degree $\leq 3.$ Also, the splitting won't change the degree of the other vertices. Thus, the number of vertices of degree $>3$ is decreased by $1.$
\end{proof}

{The following lemma gives an upper bound on the number of vertices after splitting all high degree vertices of a tree.}

\begin{lem}\label{lem:split}
For any $n\in \N^+,$ suppose $T=([n],\E)$ is a tree and $T'$ is obtained by splitting vertices of $T$ with degree greater than 3. Then $|T'|\leq 2n.$
\end{lem}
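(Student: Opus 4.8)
I want to bound $|T'|$, the number of vertices in the tree obtained after repeatedly splitting all vertices of degree $\geq 4$. The key observation is that the splitting operation is additive in a way that can be tracked exactly: when a vertex $x$ of degree $k$ is split, it is replaced by a path on $k$ new vertices, so the vertex count increases by exactly $k-1$. Since, by Lemma~\ref{splitprop}, the other vertices' degrees are unchanged and the newly created path vertices all have degree $\leq 3$, the order in which we process high-degree vertices does not matter, and each original vertex $x$ of $T$ is split at most once. Hence $|T'| = \sum_{v \in T} \max\{1, \deg_T(v) - 1\} \cdot [\text{contribution}]$ — more precisely, $|T'| = |T| + \sum_{v:\, \deg_T(v)\geq 4} (\deg_T(v) - 1)$, where the sum is over vertices of the original tree $T$.

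\medskip
\noindent\textbf{The counting argument.} The heart of the proof is then to show
\[
|T| + \sum_{v:\, \deg_T(v)\geq 4} (\deg_T(v) - 1) \;\leq\; 2n.
\]
Since $|T| = n$, this reduces to showing $\sum_{v:\, \deg_T(v)\geq 4} (\deg_T(v) - 1) \leq n$. I would bound this by the cruder but sufficient estimate $\sum_{v:\,\deg(v)\geq 4}(\deg(v)-1) \leq \sum_{v\in T}\deg(v) = 2|\E(T)| = 2(n-1) < 2n$ — wait, that only gives $2n$, not $n$. So a sharper bound is needed: I would use that for each vertex $v$ with $\deg(v) \geq 4$ one has $\deg(v) - 1 \leq \deg(v)$, but combined with the fact that summing degrees over \emph{all} vertices gives $2(n-1)$ and leaf vertices (degree $1$) contribute nothing to the high-degree sum. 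A clean way: $\sum_{v:\deg(v)\geq 4}(\deg(v)-1) \le \sum_{v\in T}\deg(v) - (\text{number of non-isolated vertices}) \le 2(n-1) - $ (something). Actually the sharpest elementary route is: each split of a degree-$k$ vertex adds $k-1$ vertices, and $k-1 \le k$; but we also know $\sum_v (\deg(v)-2) = 2(n-1) - 2n = -2 < 0$, so $\sum_{v:\deg(v)\ge 4}(\deg(v)-2) \le \sum_{v:\deg(v)\le 1}(2-\deg(v)) = $ (number of leaves) $\le n$. Then $\sum_{v:\deg(v)\ge 4}(\deg(v)-1) = \sum_{v:\deg(v)\ge 4}(\deg(v)-2) + |\{v:\deg(v)\ge4\}| \le n$, since the leaf count plus the high-degree vertex count is at most $n$. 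Hence $|T'| \le n + n = 2n$.

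\medskip
\noindent\textbf{Main obstacle.} The conceptual content is light, so the only real care needed is (i) justifying rigorously that the total increase in vertex count is exactly $\sum_{\deg(v)\ge 4}(\deg(v)-1)$ — this requires observing that splitting never creates \emph{new} high-degree vertices and never changes the degree of an untouched vertex, so the process terminates after exactly $|\{v : \deg_T(v)\ge 4\}|$ splits, each associated with a distinct original vertex of $T$; and (ii) getting the degree-sum inequality tight enough to land at $2n$ rather than something weaker. I would phrase (i) via Lemma~\ref{splitprop} (which already records that each split decreases the number of high-degree vertices by exactly one and leaves other degrees fixed) and a short induction on the number of high-degree vertices. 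For (ii), I would invoke the handshake identity $\sum_{v\in T}\deg_T(v) = 2(n-1)$ and the disjointness of $\{v:\deg(v)\le 1\}$ and $\{v:\deg(v)\ge 4\}$ to conclude $|\{v:\deg(v)\le1\}| + |\{v:\deg(v)\ge 4\}| \le n$, which closes the estimate.
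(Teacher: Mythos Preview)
Your argument is correct, but it takes a different route from the paper's. The paper does not compute the total vertex increase directly; instead it partitions the edge set of each intermediate tree $T_i$ into two classes $\E_1(T_i)$ (edges that descend from original edges of $T$) and $\E_2(T_i)$ (edges created along the new paths), observes that $|\E_1(T_i)|$ is invariant under splitting and hence equals $n-1$, and then notes that every vertex of the final tree $T'$ is an endpoint of at least one $\E_1$-edge, giving $|T'|\le 2(n-1)<2n$.

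Your approach is equally elementary: you track the exact increment $k-1$ per split, sum these over the original high-degree vertices (using Lemma~\ref{splitprop} to justify that no new high-degree vertices are created and no other degrees change), and then bound $\sum_{\deg(v)\ge 4}(\deg(v)-1)$ via the handshake identity and the disjointness of leaves and high-degree vertices. The paper's invariant-based argument is slightly slicker and yields the marginally sharper bound $|T'|\le 2n-2$, but both methods are short and self-contained; yours has the advantage of making the dependence on the original degree sequence completely explicit.
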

\begin{proof}
Suppose $T=T_0,T_1,T_2,\ldots,T_h=T'$ is the sequence of trees such that for any $i\in [h],$ $T_i$ is obtained by splitting some vertex of $T_{i-1}$ with degree greater than 3. For any $i\in \{0\}\sqcup [h],$ we define {the} edge sets $\E_1(T_i)$ and $\E_2(T_i)$ recursively as follows:

\begin{itemize}
\item $\E_1(T_0)=\E,$ $\E_2(T_0)=\emptyset.$
\item For $i=1,2,\ldots,h:$ suppose $T_i$ is obtained by splitting $x\in T_{i-1}$ into $x_1,x_2,\ldots,x_k$ such that $k\geq 4,$ $x_jx_{j+1}\in \E(T_i)$ and $x_jy_j\in \E(T_i)$ (see Figure~\ref{fig:split}). {Define}
\begin{align}\label{splitprocess}
\E_1(T_i)&:=\E_1(T_{i-1}\setminus\{x\})\sqcup\{ x_jy_j:\ j\in [k] \}\notag \\\E_2(T_i)&:=\E_2(T_{i-1}\setminus\{x\})\sqcup\{ x_jx_{j+1}:\ j\in [k-1] \}.
\end{align}
\end{itemize}
Intuitively speaking, $\E_2(T_i)$ is the set of edges in the path from the splitting of each vertex. $\E_1(T_i)$ is the set of the other edges. Because when we split a vertex, the new edges all come from the path, we have that $|\E_1(T_i)|=|\E_1(T_0)|=n-1.$ Also, for any $x\in T_i,$ it is the endpoint of at least one edge in $\E_1(T_i)$ as long as $n>1.$ Therefore, by counting the endpoints in $\E_1(T_h),$ we have that $|T'|=|T_h|\leq 1+2|\E_1(T_h)|=2n-1<2n.$ 
\end{proof}

With {Lemma~\ref{splitprop}} {and Lemma~\ref{lem:split}}, we are able to show that every graph has a binary tree decomposition.

\begin{coro}\label{splitcoro}
Consider a graph $G=([n],\E).$ {Suppose it has a (clique) tree decomposition $(T_0,\V_0)$ such that ${\rm wid}(T_0,\V_0)=\tw(G)$. Then it has a binary (clique) tree decomposition $(T,\V)$ such that ${\rm wid}(T,\V)=\tw(G)$ and $|T|\leq 2|T_0|.$}
\end{coro}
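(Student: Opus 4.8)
The plan is to prove Corollary~\ref{splitcoro} by simply iterating the splitting operation of Definition~\ref{splitop} until no high-degree vertices remain, and then controlling the size of the resulting tree via Lemma~\ref{lem:split}. First I would start from the given (clique) tree decomposition $(T_0,\V_0)$ with ${\rm wid}(T_0,\V_0)=\tw(G)$. As long as $T_0$ contains a vertex of degree at least $4$, I pick such a vertex $x$ and apply a splitting with respect to $x$, obtaining by Lemma~\ref{splitprop} a new tree decomposition of $G$ with the same width and one fewer high-degree vertex. Since each splitting strictly decreases the (finite) number of vertices of degree greater than $3$, after finitely many steps I reach a tree decomposition $(T,\V)$ of $G$ in which every vertex has degree at most $3$, i.e.\ a binary tree decomposition, and ${\rm wid}(T,\V)={\rm wid}(T_0,\V_0)=\tw(G)$.

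Next I would address the size bound $|T|\leq 2|T_0|$. The key observation is that the underlying tree of $(T,\V)$ is obtained from the underlying tree of $(T_0,\V_0)$ precisely by a sequence of splittings of vertices of degree greater than $3$, which is exactly the hypothesis of Lemma~\ref{lem:split} applied with $n=|T_0|$. Hence $|T|\leq 2|T_0|$ directly. (If $|T_0|=1$ there is nothing to split and $T=T_0$, so the bound holds trivially.)

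Finally I would check the parenthetical ``(clique)'' claim: if $(T_0,\V_0)$ is a clique tree decomposition, then every splitting preserves this property, because in Definition~\ref{splitop} each new vertex $x_i$ receives the vertex set $V'_{x_i}:=V_x$, which is a clique of $G$ by hypothesis, and all untouched vertices keep their (clique) vertex sets. So the final $(T,\V)$ is again a clique tree decomposition. This handles both the general and the clique versions simultaneously.

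I do not expect a genuine obstacle here, since all the substantive work is already done in Lemma~\ref{splitprop} and Lemma~\ref{lem:split}; the only point requiring a little care is the termination/bookkeeping argument — namely observing that the whole process from $T_0$ to $T$ is a valid instance of the splitting sequence $T_0,T_1,\dots,T_h$ appearing in Lemma~\ref{lem:split}, so that the $2n$ bound applies verbatim — together with the routine remark that splitting preserves the clique-tree property.
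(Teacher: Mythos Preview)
Your proposal is correct and follows essentially the same approach as the paper: iterate the splitting operation of Definition~\ref{splitop}, invoke Lemma~\ref{splitprop} for width preservation and termination, and then apply Lemma~\ref{lem:split} for the size bound. Your explicit remarks on termination and on preservation of the clique property are slightly more detailed than the paper's brief argument, but the substance is identical.
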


\begin{proof}
We apply the splitting operations to those vertices in the tree decomposition {$(T_0,\V_0)$} with degree $>3$ one by one until there are no vertices with degree greater $>3$ to get $(T,\V).$ From {Lemma~\ref{splitprop}}, $T$ is a binary tree decomposition with width equals to $\tw(G)$. From Lemma~\ref{lem:split}, we have that $|T|\leq {2|T_0|}.$ Similarly, if $G$ is a chordal graph, we can start from a clique tree and apply the splitting operations to obtain a binary clique tree decomposition with width $\tw(G)$.
\end{proof}

{We refer the reader to} Figure~\ref{fig:split1} and Figure~\ref{fig:split2} on how to split the clique tree in Figure~\ref{fig:clique tree} into a binary clique tree decomposition. {Readers might wonder why we need a {\em binary} tree decomposition. This is because our sparse extension of (\ref{SDP}) in Subsection~\ref{Subsec:spex} is an SDP problem whose sparsity pattern has tree-width proportional to the tree's maximum degree. We employ a splitting procedure to convert the tree decomposition into a binary tree decomposition to minimize the tree's maximum degree.}

\begin{figure}
\centering
\tikzset{every picture/.style={line width=0.75pt}} 

\begin{tikzpicture}[x=0.75pt,y=0.75pt,yscale=-1,xscale=1]

\draw    (183.5,378.25) -- (210,378.75) ;
\draw    (260,378.25) -- (282,378.25) ;
\draw    (148.5,349.25) -- (148.5,367.25) ;
\draw    (231.5,349.25) -- (231.5,368.75) ;
\draw    (302.5,385.75) -- (302.5,404.25) ;
\draw    (332.5,377.75) -- (355.5,377.75) ;
\draw    (374.5,350.25) -- (374.5,369.75) ;
\draw    (332,411.25) -- (355,411.25) ;
\draw    (262,411.75) -- (283,411.75) ;
\draw    (302.5,420.25) -- (302.5,438.75) ;

\draw (130.5,368) node [anchor=north west][inner sep=0.75pt]   [align=left] {\{1,2,5,6\}};
\draw (207,368.5) node [anchor=north west][inner sep=0.75pt]   [align=left] {\{1,2,5,6\}};
\draw (130.5,332) node [anchor=north west][inner sep=0.75pt]   [align=left] {\{1,2,8\}};
\draw (280,403.5) node [anchor=north west][inner sep=0.75pt]   [align=left] {\{2,3,4,5\}};
\draw (279,368.5) node [anchor=north west][inner sep=0.75pt]   [align=left] {\{1,2,5,6\}};
\draw (215.5,331.5) node [anchor=north west][inner sep=0.75pt]   [align=left] {\{1,6,7\}};
\draw (350.5,368) node [anchor=north west][inner sep=0.75pt]   [align=left] {\{1,2,5,6\}};
\draw (354,332.5) node [anchor=north west][inner sep=0.75pt]   [align=left] {\{5,6,12\}};
\draw (352,402) node [anchor=north west][inner sep=0.75pt]   [align=left] {\{2,3,9\}};
\draw (213.75,401.5) node [anchor=north west][inner sep=0.75pt]   [align=left] {\{4,5,11\}};
\draw (283,438.5) node [anchor=north west][inner sep=0.75pt]   [align=left] {\{3,4,10\}};

\end{tikzpicture}
\caption{A splitting of the clique tree in Figure~\ref{fig:clique tree} with respect to $\{1,2,5,6\}.$ What we obtain is a new clique tree decomposition with the same width and 1 fewer vertex of degree greater than $3.$}
\label{fig:split1}
\end{figure}
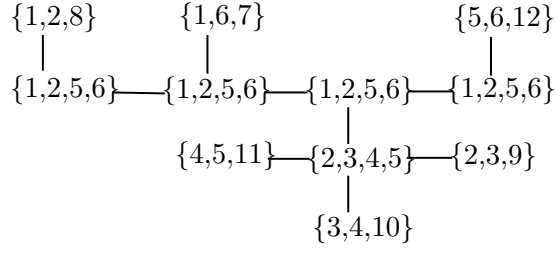

\begin{figure}
\centering
\tikzset{every picture/.style={line width=0.75pt}} 

\begin{tikzpicture}[x=0.75pt,y=0.75pt,yscale=-1,xscale=1]

\draw    (184.5,378.25) -- (211,378.75) ;
\draw    (260,378.25) -- (282,378.25) ;
\draw    (148.5,349.25) -- (148.5,367.25) ;
\draw    (231.5,349.25) -- (231.5,368.75) ;
\draw    (302.5,385.75) -- (302.5,404.25) ;
\draw    (332,378.25) -- (353,378.25) ;
\draw    (374.5,350.25) -- (374.5,369.75) ;
\draw    (332.5,411.75) -- (354,411.75) ;
\draw    (184.5,413.25) -- (211,413.75) ;
\draw    (260,413.25) -- (282,413.25) ;
\draw    (148.5,419.75) -- (148.5,437.75) ;
\draw    (232,420.25) -- (232,438.25) ;
\draw    (374,418.75) -- (374,436.75) ;

\draw (131,368.5) node [anchor=north west][inner sep=0.75pt]   [align=left] {\{1,2,5,6\}};
\draw (207,368.5) node [anchor=north west][inner sep=0.75pt]   [align=left] {\{1,2,5,6\}};
\draw (130.5,332) node [anchor=north west][inner sep=0.75pt]   [align=left] {\{1,2,8\}};
\draw (280,403.5) node [anchor=north west][inner sep=0.75pt]   [align=left] {\{2,3,4,5\}};
\draw (279,368.5) node [anchor=north west][inner sep=0.75pt]   [align=left] {\{1,2,5,6\}};
\draw (215.5,331.5) node [anchor=north west][inner sep=0.75pt]   [align=left] {\{1,6,7\}};
\draw (351,368.5) node [anchor=north west][inner sep=0.75pt]   [align=left] {\{1,2,5,6\}};
\draw (354,332.5) node [anchor=north west][inner sep=0.75pt]   [align=left] {\{5,6,12\}};
\draw (131.25,437) node [anchor=north west][inner sep=0.75pt]   [align=left] {\{4,5,11\}};
\draw (351.5,402) node [anchor=north west][inner sep=0.75pt]   [align=left] {\{2,3,4,5\}};
\draw (130.5,403) node [anchor=north west][inner sep=0.75pt]   [align=left] {\{2,3,4,5\}};
\draw (207,403.5) node [anchor=north west][inner sep=0.75pt]   [align=left] {\{2,3,4,5\}};
\draw (214.75,437.5) node [anchor=north west][inner sep=0.75pt]   [align=left] {\{3,4,10\}};
\draw (356.75,436) node [anchor=north west][inner sep=0.75pt]   [align=left] {\{2,3,9\}};

\end{tikzpicture}
\caption{A splitting of the clique tree decomposition in Figure~\ref{fig:split1} with respect to $\{2,3,4,5\}.$ What we obtain is a {\em binary} clique tree decomposition with width 3, which is the tree-width of the graph in Figure~\ref{fig:chordal graph}.}
\label{fig:split2}
\end{figure}
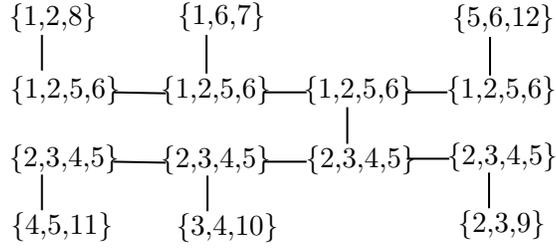

\section{Sparse extension of SDP with SPLR structure}\label{Sec:SET}

In this section, we {describe} our construction of sparse extension that decomposes {a} low-rank data matrix into several sparse data matrices. Because the sparse extension is {a} complicated {procedure} involving {many} matrix operations, we will first use a simple example to illustrate the main idea and later move on to the general case.


\subsection{A simple example}\label{Sec-simex}
Consider the following SDP problem:
\begin{equation}\label{simex}
\B:=\arg\min\left\{ 0:\ \dd(X)=e,\ \<{\bf a}{\bf a}^\top, X\> =b,\ X\in \S^n_+\right\},
\end{equation}
where ${\bf a}=[a_1;a_2;\ldots;a_n]\in \R^n$ is a nonzero vector, $b\in \R.$ It is easy to see that the SDP problem (\ref{simex}) has $\(G=\([n],\emptyset\),1\)-$SPLR structure with the sparse data matrices in the constraint $\dd(X)=e$ and a low-rank  data matrix ${\bf a}{\bf a}^\top.$ 

Now we are going to add auxiliary variables to transfer the low-rank constraint $\<{\bf a}{\bf a}^\top, X\> =b$ into sparse constraints. Suppose $X\in \B$ and $X=RR^\top$ for some $R\in \R^{n\times r},$ $r\in \N^+.$ Then we have that $\dd(RR^\top)=e$ and ${\bf a}^\top RR^\top {\bf a}=b.$ Consider $\widehat{R}\in \R^{2n\times r}$ such that $\widehat{R}_{1:n,:}:=R$ and the rest of the rows of $\widehat{R}$ are defined recursively as follows:
\begin{equation}\label{recurex}
{\rm For}\ i=1,2,\ldots,n:\ \widehat{R}_{n+i,:}:=\begin{cases} a_1 \widehat{R}_{1,:} & i=1 \\  \widehat{R}_{n+i-1,:}+a_i \widehat{R}_{i,:} & i\geq 2,\end{cases}
\end{equation}
which implies that $\widehat{R}_{2n,:}={\bf a}^\top \widehat{R}_{1:n,:}={\bf a}^\top R.$ Note that $\widehat{R}$ has the following matrix form
\begin{equation}\label{whatR}
\widehat{R}=\begin{bmatrix}R \\ a_1 R_{1,:} \\ a_1 R_{1,:}+a_2 R_{2,:}\\ \vdots \\ a_1 R_{1,:}+\cdots +a_n R_{n,:} \end{bmatrix}.
\end{equation}
Define $\widehat{X}=\widehat{R}\widehat{R}^\top.$ From (\ref{whatR}), we can see that the constraint $\<{\bf a}{\bf a}^\top,X\>=b$ is equivalent to $\widehat{X}_{2n, 2n}=b.$ Define the following new SDP problem:
\begin{multline}\label{simex1}
\widehat{\B}:=\arg\min\Big\{ 0:\ \dd\big(\widehat{X}_{1:n,1:n}\big)=e,\ \widehat{X}_{2n,2n}=b,\ [a_1,-1]\widehat{X}_{(1,n+1),(1,n+1)}[a_1;-1]=0,\\
 \forall i\in [n]\setminus \{1\},\ [a_i,1,-1]\widehat{X}_{(i,n+i-1,n+i),(i,n+i-1,n+i)}[a_i;1;-1]=0,\ \widehat{X}\in \S^{2n}_+\Big\},
\end{multline}
which has an optimal solution $\widehat{X}.$ {We recall that $[a_i,1,-1]$ and $[a_i;1;-1]$ denote a 3-dimensional row and column vector, respectively.}

\begin{claim}\label{simplecl}
The problem (\ref{simex1}) is an extension of (\ref{simex}) with $\I=[n].$
\end{claim}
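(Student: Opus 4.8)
The plan is to verify the two defining conditions of Definition~\ref{defi:ex} directly, using the index set $\I=[n]$ and the explicit recursive construction of $\widehat{R}$ (equivalently, the matrix form \eqref{whatR}). Since the objective in both \eqref{simex} and \eqref{simex1} is the constant $0$, the condition $\langle\widehat{A}_0,\widehat{X}\rangle=\langle A_0,\widehat{X}_{\I,\I}\rangle$ is automatic, so the whole content is the feasibility correspondence in both directions.

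First I would show that feasibility of $\widehat{X}$ for \eqref{simex1} implies feasibility of $\widehat{X}_{[n],[n]}$ for \eqref{simex}. Here I would not assume $\widehat X$ has a factorization a priori; instead I would argue purely at the level of matrix entries. The constraint $\dd(\widehat{X}_{1:n,1:n})=e$ is literally the constraint $\dd(X)=e$ after identifying $X:=\widehat X_{[n],[n]}$, and $\widehat{X}_{[n],[n]}\in\S^n_+$ follows since it is a principal submatrix of the PSD matrix $\widehat{X}$. It remains to recover $\langle{\bf a}{\bf a}^\top,X\rangle=b$ from the quadratic constraints. The key observation is that each constraint $[a_i,1,-1]\widehat X_{(i,n+i-1,n+i),(i,n+i-1,n+i)}[a_i;1;-1]=0$, being a constraint of the form $v^\top\widehat X_{S,S}v=0$ for a vector $v$ supported on $S$, says exactly that $\widehat X w = 0$ where $w$ is the vector in $\R^{2n}$ with $v$ placed in coordinates $S$ and zeros elsewhere — this uses that $\widehat X\succeq 0$, so $w^\top\widehat X w=0$ forces $\widehat X w=0$ (I would cite this as the standard fact $\widehat X\succeq 0,\ w^\top\widehat X w=0\Rightarrow \widehat X w=0$, which appears implicitly in the rank-reduction machinery and can be invoked). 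Chaining these linear identities for $i=1,\dots,n$ gives, for every column $j$, the relation $\widehat X_{n+i,j}=\sum_{k=1}^i a_k\widehat X_{k,j}$ by induction on $i$ (base case from the $i=1$ constraint, inductive step from the $i\ge 2$ constraints), hence in particular $\widehat X_{2n,j}=\sum_{k=1}^n a_k\widehat X_{k,j}=({\bf a}^\top X)_j$ when $j\in[n]$, and applying the same identity once more in the row index $2n$ gives $\widehat X_{2n,2n}=\sum_{k=1}^n a_k\widehat X_{2n,k}={\bf a}^\top X{\bf a}$. Combined with $\widehat X_{2n,2n}=b$ this yields $\langle{\bf a}{\bf a}^\top,X\rangle=b$, so $X$ is feasible for \eqref{simex}.

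Conversely, given any $X$ feasible for \eqref{simex}, I would produce the extension explicitly: factor $X=RR^\top$ with $R\in\R^{n\times r}$ (possible since $X\succeq0$; e.g. $r=n$ and $R$ a symmetric square root), define $\widehat R$ by \eqref{recurex}/\eqref{whatR}, and set $\widehat X=\widehat R\widehat R^\top\succeq0$. Then $\widehat X_{[n],[n]}=RR^\top=X$, so the diagonal constraint holds and $\widehat X_{\I,\I}=X$ as required; the recursive identities defining $\widehat R$ give $\widehat R_{n+1,:}=a_1\widehat R_{1,:}$ and $\widehat R_{n+i,:}=\widehat R_{n+i-1,:}+a_i\widehat R_{i,:}$, so each vector $[a_i,1,-1]$ (resp.\ $[a_1,-1]$) annihilates the corresponding rows of $\widehat R$, whence the quadratic constraints $v^\top\widehat X_{S,S}v=\|\widehat R^\top \text{(padded $v$)}\|^2=0$ hold; and $\widehat R_{2n,:}={\bf a}^\top R$ gives $\widehat X_{2n,2n}=\|{\bf a}^\top R\|^2={\bf a}^\top X{\bf a}=b$. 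This establishes the second bullet of Definition~\ref{defi:ex}.

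The only mildly delicate point — and the one I would treat most carefully — is the ``only if'' direction, where one must avoid presupposing a low-rank factorization of the \emph{given} $\widehat X$ and instead extract the linear relations $\widehat X_{n+i,:}=\sum_{k\le i}a_k\widehat X_{k,:}$ from the homogeneous quadratic (PSD) constraints; once that induction is in place everything else is bookkeeping with submatrix indices. I would also remark, as the text does, that the objective-value equality is trivial here because both objectives are $0$, so Definition~\ref{defi:ex} reduces entirely to the two feasibility statements just verified; this completes the proof that \eqref{simex1} is an extension of \eqref{simex} with $\I=[n]$. \qed
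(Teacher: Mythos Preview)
Your proof is correct and follows essentially the same strategy as the paper's: in both, the ``if'' direction (building $\widehat X$ from $X$) is done via the explicit recursion \eqref{recurex}/\eqref{whatR}, and the ``only if'' direction hinges on the observation that the homogeneous quadratic constraints $v^\top\widehat X_{S,S}v=0$ together with $\widehat X\succeq 0$ force the recursive linear relations among the rows indexed by $n+1,\dots,2n$, from which $\widehat X_{2n,2n}={\bf a}^\top X{\bf a}$ follows. The only cosmetic difference is that the paper factors $\widehat X=\widehat R\widehat R^\top$ and reads off the row relations on $\widehat R$, whereas you use the equivalent PSD null-space fact $w^\top\widehat Xw=0\Rightarrow\widehat Xw=0$ and argue directly at the level of entries of $\widehat X$; neither version requires any additional hypothesis, so this is a presentational choice rather than a different route.
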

\begin{proof}
For any $X\in \B$ such that $X=RR^\top,$ we know that $\widehat{X}=\widehat{R}\widehat{R}^\top\in \widehat{\B}$ where $\widehat{R}$ comes from (\ref{whatR}) and $\widehat{X}_{1:n,1:n}=X.$ For any $\widehat{X}=\widehat{R}\widehat{R}^\top\in \widehat{\B},$ from the constraints in (\ref{simex1}), we know that the rows of $\widehat{R}$ satisfy the recursive formula (\ref{recurex}). Therefore, we have that ${\bf a}^\top \widehat{X}_{1:n,1:n} {\bf a}=\widehat{X}_{2n,2n}=b$ and so $\widehat{X}_{1:n,1:n}\in \B.$ This completes the proof of Claim~\ref{simplecl}.
\end{proof}

Problem (\ref{simex1}) is a sparse SDP problem with the underlying graph $\widehat{G}=\big([2n],\widehat{\E}\big)$ being a chordal graph such that $\tw(\widehat{G})=\pw(\widehat{G})=2$ (see Figure~\ref{fig:simple chordal}), which is exactly {the upper bound} $\tw(G)+2\ell$ in Theorem~\ref{theo:spex} {since $\tw(G)=0$ and $\ell=1$}. {For the chordal graph $\widehat{G}$ in Figure~\ref{fig:simple chordal}, we have that $\(T:=[n],\V\),$ where 
\begin{multline}\label{simtrdecmp}
\V=\Big\{ V_1=\{1,n+1\},\ V_2=\{2,n+1,n+2\},\ldots,V_{n-1}=\{n-1,2n-2,2n-1\},\\
 V_n=\{n,2n-1,2n\} \Big\}
\end{multline}
forms a binary clique path decomposition.} In addition, each block $\widehat{Y}_{(i,n+i-1,n+i),(i,n+i-1,n+i)}$ ($i\geq 2$) has rank at most $2$ because of the constraint 
\begin{equation}\label{consY33}
[a_i,1,-1]\widehat{Y}_{(i,n+i-1,n+i),(i,n+i-1,n+i)}[a_i;1;-1]=0.
\end{equation}
Therefore, we can apply algorithm 3.1 in \cite{jiang2017minimum} to find a solution $\widehat{Y}$ such that $\rr\big(\widehat{Y}\big)\leq 2.$ Thus, $Y:=\widehat{Y}_{1:n,1:n}$ is an optimal solution of (\ref{simex}) such that $\rr(Y)\leq 2,$ which is exactly the rank bound $\tw(G)+\ell+1$ in Theorem~\ref{ubthmtree}.

{We should mention that while the sparse extension in \eqref{simex1} has increased the matrix dimension from $n$ to $2n$, the chordal conversion of \eqref{simex1} will give an SDP containing $n$  positive semidefinite blocks of dimension at most $3$. Thus in fact there is a reduction in the memory cost from $n^2$ to $9n$.
}

\begin{figure}       
\centering
\tikzset{every picture/.style={line width=0.75pt}} 

\begin{tikzpicture}[x=0.75pt,y=0.75pt,yscale=-1,xscale=1]
\draw    (4,10) -- (4,38) ;
\draw    (44,10) -- (44,38) ;
\draw    (84,10) -- (84,38) ;
\draw    (124,10) -- (124,38) ;
\draw    (164,10) -- (164,38) ;
\draw    (204,10) -- (204,38) ;
\draw    (244,10) -- (244,38) ;
\draw    (284,10) -- (284,38) ;

\draw    (44,38) -- (4,38) ;
\draw    (84,38) -- (44,38) ;
\draw    (124,38) -- (84,38) ;
\draw    (204,38) -- (164,38) ;
\draw    (244,38) -- (204,38) ;
\draw    (284,38) -- (244,38) ;

\draw    (44,10)-- (4,38) ;
\draw    (84,10) -- (44,38) ;
\draw    (124,10) -- (84,38) ;

\draw (134,18) node [anchor=north west][inner sep=0.75pt]   [align=left] {{$\cdots$}};

\draw    (204,10) -- (164,38) ;
\draw    (244,10) -- (204,38) ;
\draw    (284,10) -- (244,38) ;

\draw (0,0) node [anchor=north west][inner sep=0.75pt]   [align=left] {{\tiny 1}};
\draw (40,0) node [anchor=north west][inner sep=0.75pt]   [align=left] {{\tiny 2}};
\draw (80,0) node [anchor=north west][inner sep=0.75pt]   [align=left] {{\tiny 3}};
\draw (120,0) node [anchor=north west][inner sep=0.75pt]   [align=left] {{\tiny 4}};
\draw (0,40) node [anchor=north west][inner sep=0.75pt]   [align=left] {{\tiny $n+1$}};
\draw (40,40) node [anchor=north west][inner sep=0.75pt]   [align=left] {{\tiny $n+2$}};
\draw (80,40) node [anchor=north west][inner sep=0.75pt]   [align=left] {{\tiny $n+3$}};
\draw (120,40) node [anchor=north west][inner sep=0.75pt]   [align=left] {{\tiny $n+4$}};
\draw (160,0) node [anchor=north west][inner sep=0.75pt]   [align=left] {{\tiny $n-3$}};
\draw (200,0) node [anchor=north west][inner sep=0.75pt]   [align=left] {{\tiny $n-2$}};
\draw (240,0) node [anchor=north west][inner sep=0.75pt]   [align=left] {{\tiny $n-1$}};
\draw (280,0) node [anchor=north west][inner sep=0.75pt]   [align=left] {{\tiny $n$}};
\draw (160,40) node [anchor=north west][inner sep=0.75pt]   [align=left] {{\tiny $2n-3$}};
\draw (200,40) node [anchor=north west][inner sep=0.75pt]   [align=left] {{\tiny $2n-2$}};
\draw (240,40) node [anchor=north west][inner sep=0.75pt]   [align=left] {{\tiny $2n-1$}};
\draw (280,40) node [anchor=north west][inner sep=0.75pt]   [align=left] {{\tiny $2n$}};
\end{tikzpicture}
\caption{Chordal graph $\widehat{G}=\big([2n],\widehat{\E}\big)$ with $\tw\big(\widehat{G}\big)=\pw\big(\widehat{G}\big)=2.$}
\label{fig:simple chordal}
\end{figure}

Although the problem (\ref{simex}) is a rather simple SDP problem, our later sparse extension for the general problem (\ref{SDP}) and the proof of Theorem~\ref{theo:spex} are {based on similar ideas (even though the technical details are much more complicated)} in the sense that we will introduce auxiliary variables to transfer low-rank constraints into sparse constraints. In the next subsection, we will provide the details.


\subsection{Construction of sparse extension for (\ref{SDP})}\label{Subsec:spex}

We now provide an explicit construction of sparse extension for (\ref{SDP}) that is similar to the simple case in Subsection~\ref{Sec-simex}. Consider ${\bf A}:=\left[ {\bf a}^1,{\bf a}^2,\ldots,{\bf a}^\ell \right]\in \R^{n\times \ell}$ such that the {column vectors form} the range {space} of $[A_0^l,A_1^l,A_2^l,\ldots,A_m^l].$ {Then we} have that for any $i\in \setm,$ there exists $S_i\in \S^{\ell}$ such that $A_i^l={\bf A}S_i{\bf A}^\top.$ 

From Remark~\ref{treerem}, we may assume that $G$ is a chordal graph. {Let $(T_0,\V_0)$ be a clique tree decomposition of $G$ such that ${\rm wid}(T_0,\V_0)=\tw(G)$ and $|T_0|=p.$ From Corollary~\ref{splitcoro}, $G$ has a binary clique tree decomposition $\(T,\V:=\{V_t:\ t\in T\}\)$ such that $ {\rm wid}(T,\V)=\tw(G)$ and $|T|\leq 2{p}.$ If $T_0$ is a path and $\pw(G)=\tw(G),$ then we directly let $(T,\V)=(T_0,\V_0)$ and we have that $|T|\leq {p}.$} For convenience, we use $[k]$ to denote the vertex set of $T.$ We choose a vertex {\bf with degree less than 3} in $T$ as its root to make it a rooted binary tree (see Figure~\ref{fig:tree decomposition lem} {in Appendix~\ref{app-illus}}). For any $x\in T,$ define ${\rm chd}(x)\subset T$ be the set of children of $x,$ ${\rm par}(x)\subset T$ be the set of parent of $x.$ We have that $|{\rm chd}(x)|\leq 2$ and $|{\rm par}(x)|\leq 1.$ When $T$ is a path, then we choose the root to be one of the end points of $T$ so that each node has at most 1 child. The construction contains the following four steps.

\medskip 

\noindent{\bf Step 1.} Partitioning the vertex set $[n]$ in $G$ according to the tree decomposition $(T,\V).$

\medskip

{The goal of this step is to partition every long vector ${\bf a}^h$ into several short vectors such that each one is contained in a block {(corresponding to $V_t$ for some $t\in T$)} in the tree-decomposition. We do this partition so that later we can transfer a low-rank constraint matrix into several sparse matrices.} 

For any $t\in [k],$ define the vertex set $W_t:=V_t\setminus\( \cup_{i\in {\rm par}(t)}V_i \),$ which is the set of vertices in $V_t$ that don't appear in its parent's block. From the running intersection property of tree decomposition, we have that $\{W_t:\ t\in [k]\}$ is a vertex partition of $G,$ i.e., $[n]=\sqcup_{t\in T}W_t$ (see Figure~\ref{fig:tree decomposition partition} {in Appendix~\ref{app-illus}}). 

With the partition of $[n],$ we can partition each vector ${\bf a}^h$ {of ${\bf A}$} into smaller vectors. For any $h\in [\ell],t\in [k],$ let ${\bf a}^{h,t}:={\bf a}^h_{W_t}\in \R^{W_t}$ be the subvector of ${\bf a}^h$ in the index set $W_t.$

\medskip

\noindent{\bf Step 2.} Extending $G$ to a larger graph with the same tree structure.

\medskip

{We extend $G$ to a larger graph because later we are going to add auxiliary variables to extend (\ref{SDP}) into a higher dimensional space. The new vertices willcorrespond to new matrix dimensions in the extended problem. The new edges will correspond to new constraint matrices in the extended problem.}

For any $t\in [k],$ define the vertex set $U_t:=\{n+(t-1)\ell+1,n+(t-1)\ell+2,\ldots,n+t\ell\}.$ Define a new graph $\widetilde{G}=\big( [n+k\ell],\widetilde{\E} \big)$ 
(see Figure~\ref{fig:tree extension} {in Appendix~\ref{app-illus}}) such that 
\begin{equation}\label{edgedefi}
\widetilde{\E}:=\E\sqcup\(\sqcup_{t\in [k]}E(U_t)\)\sqcup \(\sqcup_{t\in [k]}\cup_{i\in {\rm par}(t)\sqcup\{t\}} E(U_t,V_i)\),
\end{equation}
{where $E(U_t)$ denotes the set of $|U_t|(|U_t|-1)/2$ edges in $U_t$ and $E(U_t,V_i)$ denotes the $|U_t||V_i|$ edges between $U_t$ and $V_i.$} Here we don't require $\widetilde{G}$ to be a chordal graph. With the extended graph $\widetilde{G},$ we also extend the binary clique tree decomposition $(T,\V)$ of $G$ to a new binary tree decomposition $\big(T,\widetilde{\V}:=\big\{\widetilde{V}_t:\ t\in [k]\big\}\big)$ of $\widetilde{G}$ (see Figure~\ref{fig:tree decomposition extention} {in Appendix~\ref{app-illus}}) such that for any $t\in [k],$ 
\begin{equation}\label{blockeqv}
\widetilde{V}_t:=V_t\sqcup U_t \sqcup_{i\in {\rm chd}(t)}U_i.
\end{equation}
Note that $\big(T,\widetilde{\V}\big)$ may {\bf not} be a {\bf clique} tree decomposition. Indeed, from (\ref{edgedefi}) and (\ref{blockeqv}), we know that for any $t\in [k],$ the edge set in the subgraph $\widetilde{G}\big[ \widetilde{V}_t \big]$ is
\begin{equation}\label{edgesetblkt}
\E\big( \widetilde{G}\big[ \widetilde{V}_t \big] \big)=E(V_t)\sqcup \(E(U_t)\sqcup E(U_t,V_t)\)\sqcup \(\sqcup_{j\in {\rm chd}(t)}\( E(U_j)\sqcup E(U_j,V_t)\)\).
\end{equation}

\begin{claim}\label{claimtr}
$\big(T,\widetilde{\V}\big)$ is a binary tree decomposition of $\widetilde{G}$ with width at most $\tw(G)+3\ell$. Moreover, if $\pw(G)=\tw(G),$ and $(T,\V)$ is a path-decomposition, then $\tw(\widetilde{G})\leq \tw(G)+2\ell.$
\end{claim}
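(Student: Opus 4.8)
The plan is to verify the three defining conditions of a tree decomposition for $\big(T,\widetilde{\V}\big)$ directly, and then separately track how the width grows and how the extra edges affect the path case. Recall that $\widetilde{V}_t = V_t \sqcup U_t \sqcup_{i\in{\rm chd}(t)}U_i$, so each bag $\widetilde{V}_t$ contains the original bag $V_t$ together with the ``local'' auxiliary block $U_t$ and the auxiliary blocks of its (at most two) children.

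\medskip

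\noindent\textbf{Step 1: Vertex cover and edge cover.} First I would check vertex cover: every $i\in[n]$ lies in some $V_t\subset\widetilde{V}_t$ since $(T,\V)$ covers $[n]$, and every auxiliary vertex in $U_t$ lies in $\widetilde{V}_t$ by construction, so $[n+k\ell]=\cup_{t\in[k]}\widetilde{V}_t$. For edge cover, I would consult the formula \eqref{edgedefi} for $\widetilde{\E}$: the original edges $\E$ are covered because $(T,\V)$ covers them and $V_t\subset\widetilde{V}_t$; each clique $E(U_t)$ is covered by $\widetilde{V}_t$ since $U_t\subset\widetilde{V}_t$; for the bipartite edges $E(U_t,V_i)$ with $i\in{\rm par}(t)\sqcup\{t\}$, if $i=t$ then $U_t,V_t\subset\widetilde{V}_t$ works, and if $i\in{\rm par}(t)$ then $t\in{\rm chd}(i)$, so $U_t\subset\widetilde{V}_i$ and $V_i\subset\widetilde{V}_i$, hence $\widetilde{V}_i$ covers it. In fact this matches the subgraph description \eqref{edgesetblkt}.

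\medskip

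\noindent\textbf{Step 2: Running intersection.} This is the step I expect to require the most care. I would argue it vertex-by-vertex: for a fixed vertex $v$ (original or auxiliary), the set of bags $\{t : v\in\widetilde{V}_t\}$ must induce a connected subtree of $T$. For an original vertex $v\in[n]$, $v\in\widetilde{V}_t$ iff $v\in V_t$, so this set is exactly the (connected) subtree given by running intersection of $(T,\V)$. For an auxiliary vertex $v\in U_s$, we have $v\in\widetilde{V}_t$ iff $t=s$ or $t\in{\rm par}(s)$; since $s$ and its parent are adjacent in $T$, this is an edge (or a single vertex if $s$ is the root), hence connected. Therefore the subtree condition holds for every vertex, which is equivalent to the running intersection property.

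\medskip

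\noindent\textbf{Step 3: Width bounds.} For the general bound, $|\widetilde{V}_t| = |V_t| + |U_t| + \sum_{i\in{\rm chd}(t)}|U_i| \le (\tw(G)+1) + \ell + 2\ell = \tw(G)+1+3\ell$, using $|V_t|\le\tw(G)+1$, $|U_i|=\ell$, and $|{\rm chd}(t)|\le 2$ (since $T$ is binary). Hence ${\rm wid}\big(T,\widetilde{\V}\big)\le\tw(G)+3\ell$. For the path case, we chose the root to be an endpoint of the path $T$, so each node has at most one child, giving $|\widetilde{V}_t|\le(\tw(G)+1)+\ell+\ell$ and thus width at most $\tw(G)+2\ell$; since $T$ is a path, $\big(T,\widetilde{\V}\big)$ is a path decomposition of $\widetilde{G}$, so $\tw(\widetilde{G})\le\pw(\widetilde{G})\le\tw(G)+2\ell$. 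The binary property of $\big(T,\widetilde{\V}\big)$ is immediate since the underlying tree $T$ is unchanged and already has maximum degree at most $3$. The one subtlety worth flagging is that $\big(T,\widetilde{\V}\big)$ is deliberately \emph{not} claimed to be a clique tree decomposition (indeed \eqref{edgesetblkt} shows $\widetilde{G}[\widetilde{V}_t]$ is not complete), but this is harmless: the width bound alone is what feeds into Theorem~\ref{theo:spex}, and a chordal completion can be taken afterward without increasing tree-width by Remark~\ref{treerem}(4).
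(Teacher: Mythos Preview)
Your proposal is correct and takes essentially the same approach as the paper: direct verification of the tree-decomposition axioms from \eqref{edgedefi} and \eqref{blockeqv}, followed by the bag-size count $|\widetilde{V}_t|\le(\tw(G)+1)+\ell+|{\rm chd}(t)|\cdot\ell$. The paper's own argument simply asserts that ``it is easy to see'' that the three properties hold, so your Steps~1--2 (especially the vertex-by-vertex running-intersection check, noting that each auxiliary vertex in $U_s$ appears only in $\widetilde{V}_s$ and $\widetilde{V}_{{\rm par}(s)}$) are a welcome expansion of what the paper leaves implicit.
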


From (\ref{edgedefi}) and (\ref{blockeqv}), it is easy to see that the vertex cover, edge cover and running intersection properties in Definition~\ref{treedefi} are all satisfied. Thus, $\big(T,\widetilde{\V}\big)$ is a binary tree decomposition of $\widetilde{G}.$ For any $t\in [k],$ because $|U_t|=\ell,$ from (\ref{blockeqv}) and the fact that $|{\rm chd}(t)|\leq 2$, we get $|\widetilde{V}_t|\leq \tw(G)+3\ell+1.$ This implies that the ${\rm wid}\big(T,\widetilde{\V}\big)\leq \tw(G)+3\ell.$ Moreover, if $\pw(G)=\tw(G),$ and $(T,\V)$ is a path-decomposition, then every node in $T$ has at most one child, which implies that ${\rm wid}\big(T,\widetilde{\V}\big)\leq \tw(G)+2\ell.$ This completes the proof of Claim~\ref{claimtr}. 

\medskip

\noindent{\bf Step 3.} Extending matrix $X$ into higher dimensional space.

\medskip

{With the extended graph in Step 2, now we extend the variable $X$ into a higher dimensional space through a recursive procedure that is similar to (\ref{recurex}) for the simple example in Section \ref{Sec-simex}.}

Consider {a} feasible solution $X$ of (\ref{SDP}). Let $r:=\rr(X),$ because $X$ is positive semidefinite, it has the decomposition $X=RR^\top$ such that $R\in \R^{n\times r}.$ Now we extend $R$ to $\widetilde{R}\in \R^{(n+k\ell)\times r}$ recursively as follows. Consider an ordering $t_1,t_2,\ldots,t_{k}$ of the vertex set $[k]$ of $T$ such that parents always appear {\bf later} than their children. Such an ordering can be found by depth-first search. We recursively define the rows of $\widetilde{R}$ as follows:

\begin{itemize}
\item [1,] $\widetilde{R}_{1:n,:}:=R.$
\item [2,] For $t=t_1,t_2,\ldots,t_k:$
\begin{equation}\label{recurR}
\forall h\in [\ell],\ \quad \widetilde{R}_{n+(t-1)\ell+h,:}:={({\bf a}^{h,t})^\top} R_{W_t,:}+\sum_{j\in {\rm chd}(t)}\widetilde{R}_{n+(j-1)\ell+h,:}
\end{equation}
\end{itemize}
{Note that the} above recursive definition is similar to (\ref{recurex}). In (\ref{recurR}), when we define $\widetilde{R}_{n+(t-1)\ell+h,:},$ for $j\in {\rm chd}(t),$ $\widetilde{R}_{n+(j-1)\ell+h,:}$ has already been defined because parents appear later than their children. Thus, (\ref{recurR}) is well-defined. From (\ref{recurR}) and the tree structure of $T,$ we have that for any $h\in [\ell],$ $t\in [k],$
\begin{equation}\label{inductiontree}
\widetilde{R}_{n+(t-1)\ell+h,:} =\sum_{j\in T_t}{({\bf a}^{h,j})^\top} R_{W_j,:}\,,
\end{equation}
where $T_t$ is the subtree of $T$ with $t$ as its root. From (\ref{inductiontree}) and the partition property $[n]=\sqcup_{t\in [k]}W_t$ in Step 1, we have that
\begin{equation}\label{passequi}
\forall h\in [\ell],\ \widetilde{R}_{n+(k-1)\ell+h,:} ={({\bf a}^{h})^\top} R,
\end{equation}
which implies that 
\begin{equation}\label{Rsum}
\forall h_1,h_2\in [\ell],\ \widetilde{R}_{n+(k-1)\ell+h_1,:}\widetilde{R}_{n+(k-1)\ell+h_2,:}^\top={({\bf a}^{h_1})^\top}X {\bf a}^{h_2}.
\end{equation}
Define $\widetilde{X}:=\widetilde{R}\widetilde{R}^\top\in \S^{n+k\ell}_+$ as the extension of $X$ in the higher dimensional space $\S^{n+k\ell}_+.$ {Then from} (\ref{Rsum}), we have that
\begin{equation}\label{Xsum}
\forall h_1,h_2\in [\ell],\ \widetilde{X}_{n+(k-1)\ell+h_1,n+(k-1)\ell+h_2}={({\bf a}^{h_1})^\top} X {\bf a}^{h_2}.
\end{equation}
Moreover, because $\widetilde{R}_{1:n,:}=R,$ we have that
\begin{equation}\label{fX}
\widetilde{X}_{1:n,1:n}=X.
\end{equation}


\noindent{\bf Step 4.} {Extending (\ref{SDP}) into a higher dimensional space and transferring low-rank data matrices into sparse data matrices.}

\medskip

{In the last step, we are going to extend (\ref{SDP}) into a higher dimensional space such that the extended variable $\widetilde{X}$ is a feasible solution. The new problem is similar to (\ref{simex1}) because we decompose each low-rank constraint matrix into several sparse constraint matrices.}

For any $h\in [\ell],$ $t\in [k],$ define the vector $\tilde{\bf a}^{h,t}\in \R^{\widetilde{V}_t}$ such that for any $i\in \widetilde{V}_t$,
\begin{equation}\label{hadefi}
\tilde{\bf a}^{h,t}_i:=\begin{cases}
{\bf a}^{h,t}_i & i\in W_t\\
1 & i=n+(j-1)\ell+h,\ j\in {\rm chd}(t)\\
-1 & i=n+(t-1)\ell+h \\
0 & {\rm otherwise.}
\end{cases}
\end{equation}
Comparing (\ref{hadefi}) and (\ref{recurR}), we can see that (\ref{recurR}) is equivalent to that
\begin{equation}\label{haequ}
\forall h\in [\ell],\ t\in [k],\  {(\tilde{\bf a}^{h,t})^\top} \widetilde{R}_{\widetilde{V}_t,:}=0,
\end{equation}
which is further equivalent to that 
\begin{equation}\label{haequX}
\forall h\in [\ell],\ t\in [k],\  {(\tilde{\bf a}^{h,t})^\top} \widetilde{X}_{\widetilde{V}_t,\widetilde{V}_t}\tilde{\bf a}^{h,t}=0.
\end{equation}
For notational convenience, let $\I$ be the index set $[n]$ and $\J$ be the index set {$\{n+(k-1)\ell+1,\ldots,n+k\ell\}.$} From (\ref{Xsum}) and (\ref{fX}), we have that 
\begin{equation}\label{lscons}
\forall i\in \setm,\ \<A_i,X\>=\<A_i^s,X\>+\<A_i^l,X\>=\<A_i^s,\widetilde{X}_{\I,\I}\>+\<S_i,\widetilde{X}_{\J,\J}\>.
\end{equation}
Similar to (\ref{simex1}), we define the following new problem,
\begin{multline}\label{newprob}
\min\Big\{ \<A_0^s,\widetilde{X}_{\I,\I}\>+\<S_0,\widetilde{X}_{\J,\J}\>:\ \forall i\in [m]\ b_i^l\leq \<A_i^s,\widetilde{X}_{\I,\I}\>+\<S_i,\widetilde{X}_{\J,\J}\>\leq b_i^u, 
\\
 \forall h\in [\ell],\ t\in [k],\  {(\tilde{\bf a}^{h,t})^\top} \widetilde{X}_{\widetilde{V}_t,\widetilde{V}_t}\tilde{\bf a}^{h,t}=0,\ \widetilde{X}\in \S^{n+k\ell}_+\Big\}.
\end{multline}
Similar to Claim~\ref{simplecl}, we have the following claim.


\begin{claim}\label{claimex}
The problem (\ref{newprob}) is an extension of (\ref{SDP}). 
\end{claim}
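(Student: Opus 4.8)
The plan is to verify the two conditions in Definition~\ref{defi:ex} with the index set $\I=[n]$, mirroring the argument in Claim~\ref{simplecl} but tracking the extra bookkeeping coming from the tree structure. Concretely, I need to show: (a) for every feasible $X$ of (\ref{SDP}), the matrix $\widetilde{X}$ constructed in Steps 3--4 is feasible for (\ref{newprob}) and satisfies $\widetilde{X}_{\I,\I}=X$ together with the matching objective values; and (b) conversely, for every feasible $\widehat{X}$ of (\ref{newprob}), the principal submatrix $\widehat{X}_{\I,\I}$ is feasible for (\ref{SDP}) with the matching objective value.

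For direction (a), I would take $X=RR^\top$ with $R\in\R^{n\times r}$, $r=\rr(X)$, build $\widetilde{R}$ via the recursion (\ref{recurR}), and set $\widetilde{X}=\widetilde{R}\widetilde{R}^\top\in\S^{n+k\ell}_+$. The linear (in)equality constraints of (\ref{newprob}) hold by (\ref{lscons}), which rewrites $\<A_i,X\>$ as $\<A_i^s,\widetilde{X}_{\I,\I}\>+\<S_i,\widetilde{X}_{\J,\J}\>$; here I use (\ref{fX}) to identify $\widetilde{X}_{\I,\I}=X$ and (\ref{Xsum}) together with $A_i^l={\bf A}S_i{\bf A}^\top$ to handle the low-rank part (so $\<A_i^l,X\>=\sum_{h_1,h_2}(S_i)_{h_1h_2}({\bf a}^{h_1})^\top X{\bf a}^{h_2}=\<S_i,\widetilde{X}_{\J,\J}\>$). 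The quadratic constraints $(\tilde{\bf a}^{h,t})^\top\widetilde{X}_{\widetilde{V}_t,\widetilde{V}_t}\tilde{\bf a}^{h,t}=0$ hold because (\ref{recurR}) is equivalent to (\ref{haequ}), which squares to (\ref{haequX}). Finally $\widetilde{X}_{\I,\I}=X$ gives both feasibility of the restriction and equality of objective values via (\ref{lscons}) with $i=0$.

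For direction (b), given feasible $\widehat{X}$ of (\ref{newprob}), write $\widehat{X}=\widehat{R}\widehat{R}^\top$. The constraints (\ref{haequX}) force $(\tilde{\bf a}^{h,t})^\top\widehat{R}_{\widetilde{V}_t,:}=0$ for all $h,t$ (a rank-one quadratic form vanishing on a PSD matrix means the vector is in the kernel of any square-root factor), which by the definition (\ref{hadefi}) of $\tilde{\bf a}^{h,t}$ is exactly the recursion (\ref{recurR}) satisfied by the rows of $\widehat{R}$. Running the induction (\ref{inductiontree})--(\ref{passequi}) with $R_{W_j,:}$ replaced by $\widehat{R}_{W_j,:}$ and using $[n]=\sqcup_t W_t$, I obtain $\widehat{R}_{n+(k-1)\ell+h,:}=({\bf a}^h)^\top\widehat{R}_{1:n,:}$, hence $\widehat{X}_{\J,\J}$ is the Gram matrix of the vectors ${\bf a}^h$ against $\widehat{X}_{\I,\I}$, i.e.\ $\widehat{X}_{\J,\J}={\bf A}^\top\widehat{X}_{\I,\I}{\bf A}$. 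Plugging this into (\ref{lscons}) shows $\<A_i^s,\widehat{X}_{\I,\I}\>+\<S_i,\widehat{X}_{\J,\J}\>=\<A_i,\widehat{X}_{\I,\I}\>$ for all $i\in\setm$, so the linear (in)equalities of (\ref{SDP}) are met; since $\widehat{X}\succeq 0$ forces $\widehat{X}_{\I,\I}\succeq 0$, the restriction is feasible for (\ref{SDP}), and the $i=0$ case gives the objective equality. Combining (a) and (b) with $\I=[n]$ establishes that (\ref{newprob}) is an extension of (\ref{SDP}).

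I expect the only real subtlety to be the well-definedness and correctness of the recursion: making sure the depth-first ordering $t_1,\dots,t_k$ (parents after children) makes (\ref{recurR}) well-posed, that (\ref{inductiontree}) is correctly proved by induction over the subtree $T_t$ using $|{\rm chd}(t)|\le 2$, and that the telescoping over the partition $\{W_t\}$ really collects each ${\bf a}^{h,j}$ exactly once so that (\ref{passequi}) yields the full inner product $({\bf a}^h)^\top\widehat{X}_{\I,\I}{\bf a}^{h}$. All of this is already done in Step 3 of the construction, so for the proof of the claim I would simply cite equations (\ref{haequ})--(\ref{lscons}) and (\ref{fX}), noting that the same identities hold for an arbitrary factor $\widehat{R}$ of a feasible $\widehat{X}$, not just for the specific $\widetilde{R}$ built from a feasible $X$.
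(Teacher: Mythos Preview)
Your proposal is correct and follows essentially the same approach as the paper: both directions of Definition~\ref{defi:ex} are verified with $\I=[n]$ by citing the equivalence (\ref{recurR})$\Leftrightarrow$(\ref{haequ})$\Leftrightarrow$(\ref{haequX}) together with (\ref{fX}) and (\ref{lscons}). You spell out a few more details (e.g., the identity $\widehat{X}_{\J,\J}={\bf A}^\top\widehat{X}_{\I,\I}{\bf A}$ and why vanishing of the rank-one quadratic form forces $(\tilde{\bf a}^{h,t})^\top\widehat{R}_{\widetilde{V}_t,:}=0$), but the argument is otherwise identical to the paper's.
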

\begin{proof}
For any feasible solution $X=RR^\top$ of (\ref{SDP}), we can define $\widetilde{X}{=\tilde{R}\tilde{R}^\top}$ following the process of (\ref{recurR}). {Then from} (\ref{fX}), $\widetilde{X}_{\I,\I}=X.$ {Now from} (\ref{lscons}), $\widetilde{X}$ is {a} feasible solution of (\ref{newprob}) and $\<A_0,X\>=\<A_0^s,\widetilde{X}_{\I,\I}\>+\<S_0,\widetilde{X}_{\J,\J}\>.$ {Conversely, for} any feasible $\widetilde{X}=\widetilde{R}\widetilde{R}^\top$ of (\ref{newprob}), because of the constraints $(\tilde{\bf a}^{h,t})^\top \widetilde{X}_{\widetilde{V}_t,\widetilde{V}_t}\tilde{\bf a}^{h,t}=0$ in (\ref{newprob}) and the equivalence between (\ref{recurR}), (\ref{haequ}) and (\ref{haequX}), we have that the recursive formula (\ref{recurR}) is still satisfied for the row vectors of $\widetilde{R}.$ Therefore, we have that (\ref{lscons}) hold{s}. {This} implies that {$\widetilde{X}_{\I,\I}$} is feasible for (\ref{SDP}) with the same function value as that of $\widetilde{X}$ in (\ref{newprob}). This completes the proof of Claim~\ref{claimex}. 
\end{proof}

\subsection{Proof of Theorem~\ref{theo:spex}}
\begin{proof}

With the sparse extension (\ref{newprob}) and Claim~\ref{claimex} in Subsection~\ref{Subsec:spex}, in order to prove Theorem~\ref{theo:spex}, we only have to show that the tree-width and matrix dimension of the problem (\ref{newprob}) satisfy the condition{s} in Theorem~\ref{theo:spex}. From the data matrices in the problem (\ref{newprob}) and (\ref{hadefi}), we can see that the sparsity pattern of (\ref{newprob}) is contained in the chordal completion of $\widetilde{G}$ (see Figure~\ref{fig:tree decomposition completion} {in Appendix~\ref{app-illus}}). From (\ref{blockeqv}) and the fact that $T$ is a binary tree, we have that $\tw(\widetilde{G})\leq \tw(G)+3\ell$ and the matrix dimension of (\ref{newprob}) {is at most $n+2p\cdot\ell$ because $|T|\leq 2p$.} When $\pw(G)=\tw(G)$ {and $T$ is a path,} because every node has at most $1$ child, we have that $\tw(\widetilde{G})\leq \tw(G)+2\ell$ and Also, the matrix dimension of (\ref{newprob}) {is at most $n+p\cdot\ell$ because $|T|\leq p$}. This completes the proof of Theorem~\ref{theo:spex}.
\end{proof}

\subsection{Chordal conversion of sparse extension}

In Subsection~\ref{Subsec:spex}, we discussed how to convert (\ref{SDP}) into a sparse SDP problem with bounded tree-width (\ref{newprob}). In practice, we can apply chordal conversion to transfer it into a multi-block SDP problem. In detail, let 
$\widehat{G}:=\big(n+k\ell,\widehat{\E}\big)$ be the chordal completion of $\widetilde{G}$ (see Figure~\ref{fig:tree decomposition completion} {in Appendix~\ref{app-illus}}). {According} to  Grone’s and Dancis's theorems\cite{grone1984positive,dancis1992positive}, problem (\ref{newprob}) is equivalent to the following problem{:}
\begin{multline}\label{newprob1}
\min\Big\{ \<A_0^s,\widetilde{X}_{\I,\I}\>+\<S_0,\widetilde{X}_{\J,\J}\>:\ \forall i\in [m],\ b_i^l\leq \<A_i^s,\widetilde{X}_{\I,\I}\>+\<S_i,\widetilde{X}_{\J,\J}\>\leq b_i^u, 
\\
 \forall h\in [\ell],\ t\in [k],\  {(\tilde{\bf a}^{h,t})^\top} \widetilde{X}_{\widetilde{V}_t,\widetilde{V}_t}\tilde{\bf a}^{h,t}=0,\ \widetilde{X}_{\widetilde{V}_t,\widetilde{V}_t}\in \S^{|\widetilde{V}_t|}_+, \;\widetilde{X}\in \S(\widehat{\E},0)\Big\},
\end{multline}
where the PSD constraint of the whole matrix variable is decomposed into the PSD constraints of the corresponding smaller blocks in the tree decomposition $(T,\widetilde{\V}).$ Moreover, $\widetilde{X}$ is a sparse matrix with 
sparsity pattern contained in $\widehat{E}.$

To solve (\ref{newprob1}), we can use the alternating {direction} method of multiplier (ADMM) by {considering} the following equivalent formulation:  
\begin{multline}\label{newprob2}
\min\Big\{ \<A_0^s,\widetilde{X}_{\I,\I}\>+\<S_0,\widetilde{X}_{\J,\J}\>:\ \forall i\in [m],\ b_i^l\leq \<A_i^s,\widetilde{X}_{\I,\I}\>+\<S_i,\widetilde{X}_{\J,\J}\>\leq b_i^u, 
\\
 \forall h\in [\ell],\ t\in [k],\ \widetilde{X}_{\widetilde{V}_t,\widetilde{V}_t}=Y_t,\ {(\tilde{\bf a}^{h,t})^\top} Y_t\tilde{\bf a}^{h,t}=0,\ Y_t\in \S^{|\widetilde{V}_t|}_+, \; \widetilde{X}\in \S(\widehat{\E},0)\Big\},
\end{multline}
where we introduce {the} variables $\{Y_t\}_{t\in T}$ to separate the linear constraints and PSD constraints. Another popular method for {solving} sparse SDP is interior point method (IPM). However, IPM is not applicable to solve (\ref{newprob2}) directly because of the constraints ${(\tilde{\bf a}^{h,t})^\top} Y_t\tilde{\bf a}^{h,t}=0$, which implies that there is no {strict primal} feasible solution. In order to alleviate this issue, we can apply facial reduction \cite{wolkowicz2012handbook} to remove the constraint ${(\tilde{\bf a}^{h,t})^\top} Y_t\tilde{\bf a}^{h,t}=0.$

\section{The rank bound of SDP with SPLR structure}\label{Sec:rank}


\subsection{Proof of Theorem~\ref{ubthmtree}.}\label{Subsec:pfthm}

\begin{proof}

Consider the problem (\ref{newprob}), which is the sparse extension of (\ref{SDP}).

When $\pw(G)=\tw(G),$ we use the path decomposition $(T,\V)$ of $G$ and $\widetilde{G}$ defined in (\ref{edgedefi}). Consider the chordal extension 
$\widehat{G}=\big({[n+k\ell]},\widehat{\E}\big)$ of $\widetilde{G}.$ We have that $\tw(\widehat{G})=\pw(\widehat{G})=\tw(G)+2\ell$. Moreover, $\widehat{G}$ is the sparsity pattern of (\ref{newprob}). From (\ref{blockeqv}), we know that for any $t\in [k],$ the block size $|\widetilde{V}_t|\leq \tw(G)+2\ell+1.$ Let $\widetilde{X}$ be an {\em optimal} solution of (\ref{newprob}) and consider the following set 

\begin{equation}\label{Sset}
\S_+\big(\widetilde{X},\widehat{G}\big):=\left\{ \widetilde{Y}\in \S^{n+k\ell}_+:\ \P_{\widehat{\E}}\big(\widetilde{Y}\big)
=\P_{\widehat{\E}}\big(\widetilde{X}\big)\right\}.
\end{equation}
Because $\widehat{G}$ is the sparsity pattern of (\ref{newprob}), we know that every element of $\S_+\big(\widetilde{X},\widehat{G}\big)$ is an optimal solution of (\ref{newprob}). Moreover, from (\ref{haequX}) and the fact that for any $t\in [k],$ the vectors in $\left\{\hat{\bf a}^{h,t}:\ h\in [\ell]\right\}$ are linearly independent, we have that for any $t\in [k],$ $\rr\big(\widehat{X}_{\widehat{V}_t,\widehat{V}_t}\big)\leq \tw(G)+\ell+1$. Therefore, we can apply algorithm 3.1 in \cite{jiang2017minimum} to find an element $\widetilde{Y}\in\S_+\big(\widetilde{X},\widehat{G}\big)$ whose rank is at most $\pw(G)+\ell+1.$ {Thus} $\widetilde{Y}_{1:n,1:n}$ is an optimal solution of (\ref{SDP}) with rank at most $\pw(G)+\ell+1.$

Now we move on to the more general case where $\tw(G)$ is not necessarily equal to $\pw(G).$ In this case, because $T$ is a binary tree, a node might have two children, we have that $|\widetilde{V}_t|\leq \tw(G)+3\ell+1$ from (\ref{blockeqv}). Similarly, we can consider the chordal completion of $\widetilde{G}$ to get the sparsity pattern $\widehat{G}$ of (\ref{newprob}) (see Figure~\ref{fig:tree decomposition completion} in Appendix A) and apply algorithm 3.1 in \cite{jiang2017minimum} to find a low-rank element $\S_+\big(\widetilde{X},\widehat{G}\big)$. However, by doing this, we will get the rank bound of
$\tw(G)+2\ell+1$ instead of $\tw(G)+\varphi(\ell)+1$. In order to get the tighter rank bound {of} $\tw(G)+\varphi(\ell)+1,$ we will first apply a rank reduction procedure in each 
smaller block $\widetilde{X}_{\widetilde{V}_t,\widetilde{V}_t}$ independently and then apply chordal completion to $\widetilde{G}.$ Here is where the function $\varphi(\cdot)$ in Definition~\ref{cvxnb} comes into play. Consider the following set
\begin{multline}\label{tildeSset}
\widetilde{\S}_+\big(\widetilde{X},\widetilde{G}\big):=\Big\{ \widetilde{Y}\in \S^{n+k\ell}_+:\ \P_{\widetilde{\E}}\big(\widetilde{Y}\big)=\P_{\widetilde{\E}}\big(\widetilde{X}\big),
\\
 \forall h\in [\ell],\ t\in [k],\  {(\tilde{\bf a}^{h,t})^\top} \widetilde{Y}_{\widetilde{V}_t,\widetilde{V}_t}\tilde{\bf a}^{h,t}=0\Big\},
\end{multline}
where $\tilde{\bf a}^{h,t}$ is defined as in \eqref{hadefi}.
It is easy to see that any element in (\ref{tildeSset}) is an optimal solution of (\ref{newprob}). For any $t\in [k],$ define the matrix 
\begin{equation}\label{matbfA}
{\bf A}^t:=\left[ \tilde{\bf a}^{1,t},\tilde{\bf a}^{2,t},\ldots, \tilde{\bf a}^{\ell,t}\right]\in \R^{\widetilde{V}_t\times\ell}.
\end{equation}
We have the following equivalence,
\begin{equation}\label{condYbfA}
\forall h\in [\ell],\ t\in [k],\  {(\tilde{\bf a}^{h,t})^\top} \widetilde{Y}_{\widetilde{V}_t,\widetilde{V}_t}\tilde{\bf a}^{h,t}=0 \;\;
\Leftrightarrow \;\;
\forall t\in [k],\ {\bf A}^{t\top} \widetilde{Y}_{\widetilde{V}_t,\widetilde{V}_t}{\bf A}^{t}={\bf 0}_{\ell\times \ell}.
\end{equation}
Moreover, from (\ref{hadefi}), we have that 
\begin{equation}\label{matA}
\forall t\in [k],\ \begin{cases}
{\bf A}^t_{W_t,:}={\bf A}_{W_t,:}, &\\
{\bf A}^t_{U_j,:}=I_\ell, & \forall j\in {\rm chd}(t)\\
{\bf A}^t_{U_t,:}=-I_\ell,&\\
{\bf A}^t_{V_t\setminus W_t,:}={\bf 0}_{(|V_t|-|W_t|)\times \ell}. &
\end{cases}
\end{equation}
For any $t\in [k],$ define the following set
\begin{multline}\label{setbfA}
\widetilde{\S}_+\big(\widetilde{X},t\big):=\Big\{ Z^t\in \S^{\widetilde{V}_t}_+:\ \P_{\E\(\widetilde{G}\left[\widetilde{V_t}\right]\)}\( Z^t \)=\P_{\E\big(\widetilde{G}\left[\widetilde{V_t}\right]\big)}\big( \widetilde{X}_{\widetilde{V}_t,\widetilde{V}_t} \big),
\;\;
 {\bf A}^{t\top} Z^t {\bf A}^t={\bf 0}_{\ell\times \ell}
 \Big\}.
\end{multline}
From (\ref{tildeSset}) and (\ref{condYbfA}), we have that 
\begin{equation}\label{equivS}
\widetilde{\S}_+\big(\widetilde{X},\widetilde{G}\big)= \left\{ \widetilde{Y}\in \S^{n+k\ell}_+:\ \forall t\in [k],\ \widetilde{Y}_{\widetilde{V}_t,\widetilde{V}_t}\in \widetilde{\S}_+\big(\widetilde{X},t\big) \right\}.
\end{equation}
The following claim is the rank reduction procedure mentioned before. 

\begin{claim}\label{claimStilde}
For any $t\in [k],$ $\widetilde{\S}_+(\widetilde{X},t)$ contains an element of rank at most $\tw(G)+\varphi(\ell)+1.$
\end{claim}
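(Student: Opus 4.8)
The plan is to reduce Claim~\ref{claimStilde} to the structure encapsulated by $\varphi(\ell)$ in Definition~\ref{cvxnb}, using the explicit block form of ${\bf A}^t$ in \eqref{matA}. First I would fix $t\in [k]$ and partition the index set $\widetilde{V}_t$ into four groups according to \eqref{blockeqv}: the ``sparse'' part $S_t:=V_t$ (of size $|V_t|\leq \tw(G)+1$), the parent-copy block $U_t$ (size $\ell$), and the child-copy blocks $\{U_j: j\in{\rm chd}(t)\}$ (each of size $\ell$, with at most two of them). The constraint ${\bf A}^{t\top}Z^t{\bf A}^t={\bf 0}$ together with $Z^t\succeq 0$ forces the columns of ${\bf A}^t$ to lie in the kernel of $Z^t$: indeed, $v^\top Z^t v=0$ with $Z^t\succeq 0$ implies $Z^t v=0$, so each $\tilde{\bf a}^{h,t}\in\ker Z^t$. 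Hence $Z^t$ is supported on a subspace of dimension at most $|\widetilde{V}_t|-\ell$. The remaining task is to further reduce the rank \emph{within} that subspace while preserving the prescribed sparse projection $\P_{\E(\widetilde{G}[\widetilde{V}_t])}$.

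Next I would separate the two kinds of constraints. The sparsity-preservation constraint $\P_{\E(\widetilde{G}[\widetilde{V}_t])}(Z^t)=\P_{\E(\widetilde{G}[\widetilde{V}_t])}(\widetilde{X}_{\widetilde{V}_t,\widetilde{V}_t})$ is exactly of the type handled by Dancis's theorem (Remark~\ref{treerem}, and the use of \cite[algorithm 3.1]{jiang2017minimum} elsewhere in this section): for the \emph{chordal} part of the graph, one can find a PSD completion/alternative whose rank is bounded by the relevant clique-tree width. The idea is that the sparsity pattern $\widetilde{G}[\widetilde{V}_t]$ decomposes (via \eqref{edgesetblkt}) into the clique $V_t$ together with the ``low-rank gadget'' edges $E(U_t)\sqcup E(U_t,V_t)$ and $\sqcup_{j\in{\rm chd}(t)}(E(U_j)\sqcup E(U_j,V_t))$. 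So I would treat the $V_t$-block as contributing $\tw(G)+1$ to the rank via the chordal machinery, and treat the $U$-blocks via the $\varphi(\ell)$ gadget. Concretely, after using $\ker Z^t\supseteq{\rm range}({\bf A}^t)$, one can change coordinates so that the $U_t$-rows of any admissible $Z^t$ are determined by the $V_t$- and $U_j$-rows (mirroring \eqref{recurR}); this effectively collapses the two or three $\ell$-dimensional copies into a single $2\ell$-dimensional system with two identity constraints and a fixed Gram matrix $M\in\S^\ell$ — precisely the set \eqref{cvxset} whose rank is controlled by $\varphi(\ell)$.

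The order of operations I would carry out: (1) show ${\bf A}^{t\top}Z^t{\bf A}^t=0$ forces $Z^t{\bf A}^t=0$; (2) use \eqref{matA} to write the $U_t$-rows of $Z^t$ explicitly in terms of the other rows, so that $Z^t$ is a congruence image of a smaller matrix $\widehat Z^t\in\S^{|V_t|+2\ell}_+$ (or $+\ell$ when ${\rm chd}(t)$ is a singleton) supported on the chordal ``clique + one or two $\ell$-gadgets'' pattern; (3) within $\widehat Z^t$, isolate the pure $\ell\times\ell$ subproblems on the child copies: each such subproblem, after normalizing the diagonal blocks to $I_\ell$ using the free entries of the sparse pattern, is an instance of \eqref{cvxset}, hence admits a representative of rank $\leq\varphi(\ell)$; (4) glue the reduced gadget blocks back with a rank-$(\tw(G)+1)$ chordal completion of the $V_t$-clique part (Dancis / \cite{jiang2017minimum}), getting total rank $\leq(\tw(G)+1)+\varphi(\ell)$; (5) map back through the congruence to produce $Z^t\in\widetilde{\S}_+(\widetilde{X},t)$ of rank $\leq\tw(G)+\varphi(\ell)+1$.

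The main obstacle I anticipate is step (3)–(4): carefully decoupling the shared block $V_t$ (which is glued to \emph{both} child gadgets $U_j$ and to the parent gadget $U_t$) so that the rank contributions add rather than multiply. The subtlety is that ${\bf A}^t$ couples all four groups simultaneously through the single identity $\sum_{j\in{\rm chd}(t)}(\cdot)_{U_j}-(\cdot)_{U_t}+{\bf A}_{W_t,:}^\top(\cdot)_{W_t}=0$, so one cannot naively reduce each $U_j$-block in isolation — one must verify that after eliminating the $U_t$-rows, the residual structure on $V_t\cup\bigcup_j U_j$ is genuinely a chordal sparsity pattern plus \emph{independent} copies of \eqref{cvxset}, with no leftover cross-coupling that would inflate the rank beyond the claimed bound. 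I would handle this by checking that the relevant union graph is chordal with a clique tree whose bags are $V_t\cup U_t$ and, separately, the $\ell$-gadget structure, invoking Definition~\ref{cvxnb} exactly once per element of ${\rm chd}(t)$ and Dancis once for the $V_t$-clique, and confirming the ranks are additive across the overlap $V_t$ (which has rank contribution $\le \tw(G)+1$ counted only once).
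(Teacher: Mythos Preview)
Your outline has a genuine gap in how $\varphi(\ell)$ is deployed. First, you omit the case split on $|{\rm chd}(t)|$: when $t$ has at most one child, $|\widetilde V_t|\le \tw(G)+2\ell+1$, and since $\rr({\bf A}^t)=\ell$ the kernel constraint ${\bf A}^{t\top}Z^t{\bf A}^t=0$ already forces $\rr(Z^t)\le \tw(G)+\ell+1\le \tw(G)+\varphi(\ell)+1$ for \emph{every} element of $\widetilde\S_+(\widetilde X,t)$. The function $\varphi$ is only needed when $|{\rm chd}(t)|=2$. Second, and more seriously, you propose to invoke Definition~\ref{cvxnb} ``once per element of ${\rm chd}(t)$'' on ``pure $\ell\times\ell$ subproblems''. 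But the set \eqref{cvxset} is a $2\ell\times 2\ell$ structure with \emph{two} $I_\ell$ diagonal blocks; it is precisely the gadget that captures the \emph{joint} interaction of the two children $U_{j_1},U_{j_2}$, not something applied to each child separately. There is a single free $\ell\times\ell$ off-diagonal block (between $U_{j_1}$ and $U_{j_2}$), not one per child.

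Your step~(4) also misfires: the $V_t$-block is a full clique in $\widetilde G[\widetilde V_t]$, so $Z^t_{V_t,V_t}=\widetilde X_{V_t,V_t}$ is entirely fixed by the projection constraint and there is no Dancis-type completion freedom to exploit there. The paper's route (packaged as Lemma~\ref{rankreduc}) is the opposite of yours: it \emph{Schur-complements out} the fixed $V_t$-block $B=\widetilde X_{V_t,V_t}$, which contributes $\rr(B)\le |V_t|\le \tw(G)+1$ to the rank, and leaves a $3\ell\times 3\ell$ PSD matrix with three fixed diagonal $\ell$-blocks $M_1',M_2',M_3'$ and the single constraint $[I_\ell,I_\ell,-I_\ell]M'[I_\ell;I_\ell;-I_\ell]=0$. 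Lemma~\ref{223} shows this residual set admits an element of rank $\le\varphi(\ell)$ (by a QR reduction to \eqref{cvxset}); undoing the Schur complement gives total rank $\le |V_t|+\varphi(\ell)\le \tw(G)+\varphi(\ell)+1$. So the correct decomposition is ``$V_t$ via Schur complement, then $\varphi(\ell)$ once on the three-block residual'', not ``eliminate $U_t$, then Dancis on $V_t$, then $\varphi$ per child''.
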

{First,} $\widetilde{\S}_+(\widetilde{X},t)\neq \emptyset$ because $\widetilde{X}_{\widetilde{V}_t,\widetilde{V}_t}\in \widetilde{\S}_+(\widetilde{X},t).$ If $|{\rm chd}(t)|\leq 1,$ then from (\ref{blockeqv}), $|\widetilde{V}_t|\leq \tw(G)+2\ell+1.$ Because of the constraints $ {\bf A}^{t\top} Z^t {\bf A}^t={\bf 0}_{\ell\times \ell}$ and the fact that $\rr\({\bf A}^{t}\)=\ell$ (see (\ref{matA})), we have that any element $Z^t\in \widetilde{\S}_+(\widetilde{X},t)$ has rank at most $\tw(G)+\ell+1\leq \tw(G)+\varphi(\ell)+1.$ If ${\rm chd}(t)=\{j_1,j_2\},$ by rearranging the rows and columns of $Z^t\in \S^{\widetilde{V}_t}_+$ in the order $V_t,U_{j_1},U_{j_2},U_t,$ the linear constraints in $\widetilde{\S}_+(\widetilde{X},t)$ can be written as follows:
\begin{equation}\label{matSt}
[{\bf A}^{t\top}_{V_t},I_\ell,I_\ell,-I_\ell]\(Z^t=\begin{bmatrix}
\widetilde{X}_{V_t,V_t} & \widetilde{X}_{V_t,U_{j_1}} & \widetilde{X}_{V_t,U_{j_2}} & \widetilde{X}_{V_t,U_t}\\
\widetilde{X}_{U_{j_1},V_t} & \widetilde{X}_{U_{j_1},U_{j_1}} & * & *\\
\widetilde{X}_{U_{j_2},V_t} & * & \widetilde{X}_{U_{j_2},U_{j_2}} & * \\
\widetilde{X}_{U_t,V_t} & * & * & \widetilde{X}_{U_t,U_t}
\end{bmatrix}\)\begin{bmatrix}{\bf A}^{t}_{V_t}\\I_\ell\\I_\ell\\-I_\ell\end{bmatrix}={\bf 0}_{\ell\times \ell},
\end{equation}
where we have used (\ref{matA}) in deriving the above matrix formulation. Using Lemma~\ref{rankreduc} (note that (\ref{matSt}) and (\ref{3lmat}) have the same structure), we know that $\widetilde{\S}_+(\widetilde{X},t)$ has an element of rank at most $\tw(G)+\varphi(\ell)+1.$ The completes the proof of Claim~\ref{claimStilde}.

With Claim~\ref{claimStilde}, for any $t\in [k],$ {consider} $Z^t\in \widetilde{\S}_+\big(\widetilde{X},t\big)$ such that $\rr\(Z^t\)\leq \tw(G)+\varphi(\ell)+1.$ Now, since we have found a low-rank element in each block, we move on to consider the chordal completion of $\widetilde{G}$ (see Figure~\ref{fig:tree decomposition completion} {in Appendix~\ref{app-illus}}). In detail, {let} 
\begin{equation}\label{whatGE}
\widehat{G}:=\big([n+k\ell],\widehat{\E}:=\cup_{t\in [k]}E\big(\widetilde{V}_t\big)\big),
\end{equation}
which is obtained by adding edges in each block $\widetilde{V}_t$ of $\widetilde{G}$ to make it a clique. From (\ref{edgesetblkt}), we can see that the newly added edges are $\sqcup_{j\in {\rm chd}(t)}E\(U_t,U_j\) \sqcup_{j_1,j_2\in {\rm chd}(t),j_1\neq j_2} E\(U_{j_1},U_{j_2}\),$ which corresponds to the ``$\ast$" blocks in (\ref{matSt}). It is easy to see that $\widehat{G}$ is a chordal graph with binary clique tree decomposition $\big(T,\widehat{\V}:=\big\{\widehat{V}_t:\ \widehat{V}_t=\widetilde{V}_t,\ t\in [k]\big\}\big).$ Consider the following set
\begin{equation}\label{setwhatS}
\widehat{\S}\big( \left\{Z^t:\ t\in [k]\right\},\widehat{G} \big):=\left\{ \widehat{Y}\in \S^{n+k\ell}_+:\ \forall t\in [k] ,\ \widehat{Y}_{\widehat{V}_t,\widehat{V}_t}=Z^t \right\}.
\end{equation}

\begin{claim}\label{nonempclaim}
$\widehat{\S}\big( \left\{Z^t:\ t\in [k]\right\},\widehat{G} \big)\neq \emptyset.$
\end{claim}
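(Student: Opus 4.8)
The plan is to read Claim~\ref{nonempclaim} as a positive semidefinite matrix completion problem on the chordal graph $\widehat{G}$. Recall from \eqref{whatGE} that $\big(T,\widehat{\V}\big)$ is a clique tree decomposition of $\widehat{G}$ with $\widehat{V}_t=\widetilde{V}_t$, so the $\widehat{V}_t$ are cliques of $\widehat{G}$ covering every edge of $\widehat{\E}$. I would define a partial symmetric matrix $W$, specified on the diagonal together with the pattern $\widehat{\E}$, by setting $W_{ij}:=Z^t_{ij}$ for any $t\in[k]$ with $i,j\in\widehat{V}_t$ (one exists by the edge cover property). Once $W$ is shown to be well defined, i.e.\ the $Z^t$ are pairwise consistent on their overlaps, one appeal to Grone's positive semidefinite completion theorem \cite{grone1984positive} will produce $\widehat{Y}\in\S^{n+k\ell}_+$ with $\widehat{Y}_{\widehat{V}_t,\widehat{V}_t}=Z^t$ for all $t$, which is exactly an element of $\widehat{\S}\big(\{Z^t:t\in[k]\},\widehat{G}\big)$.

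\textbf{Consistency.} The heart of the argument is to show that for every edge $tt'$ of $T$, say $t'={\rm par}(t)$, the blocks $Z^t$ and $Z^{t'}$ agree on the submatrix indexed by $\widehat{V}_t\cap\widehat{V}_{t'}$. By \eqref{blockeqv} and the disjointness of the auxiliary sets $U_s$, one has $\widehat{V}_t\cap\widehat{V}_{t'}=(V_t\cap V_{t'})\sqcup U_t$. I claim every diagonal position and every pair of distinct vertices inside this set is an edge of $\widetilde{G}$ lying in both $\widetilde{G}\big[\widetilde{V}_t\big]$ and $\widetilde{G}\big[\widetilde{V}_{t'}\big]$: a pair in $V_t\cap V_{t'}$ is an edge of $G$ (an intersection of cliques is a clique), a pair in $U_t$ lies in $E(U_t)$, and a mixed pair lies in $E(U_t,V_t)\cap E(U_t,V_{t'})$; in each case the membership in $\E\big(\widetilde{G}\big[\widetilde{V}_t\big]\big)$ and in $\E\big(\widetilde{G}\big[\widetilde{V}_{t'}\big]\big)$ follows from \eqref{edgesetblkt} (using $t\in{\rm chd}(t')$ on the $t'$-side). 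By the definition \eqref{setbfA}, $Z^t\in\widetilde{\S}_+(\widetilde{X},t)$ forces $Z^t$ to coincide with $\widetilde{X}$ on the diagonal and on all edges of $\widetilde{G}$ inside $\widetilde{V}_t$, and similarly for $Z^{t'}$; hence both agree with $\widetilde{X}$, and therefore with each other, on the whole submatrix indexed by $\widehat{V}_t\cap\widehat{V}_{t'}$. For arbitrary $t_1,t_3$ and $i,j\in\widehat{V}_{t_1}\cap\widehat{V}_{t_3}$, the running intersection property of $\big(T,\widehat{\V}\big)$ places both $i$ and $j$ in every block along the $t_1$-$t_3$ path of $T$, so agreement on consecutive overlaps propagates and yields $Z^{t_1}_{ij}=Z^{t_3}_{ij}$. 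Thus $W$ is well defined, and since all pairs inside a clique $\widehat{V}_t$ are specified, $W_{\widehat{V}_t,\widehat{V}_t}=Z^t$ for every $t$.

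\textbf{Completion.} It remains to invoke Grone's theorem \cite{grone1984positive}: $\widehat{G}$ is chordal with clique tree $\big(T,\widehat{\V}\big)$, every maximal clique of $\widehat{G}$ occurs as some $\widehat{V}_t$, and the corresponding principal submatrix $W_{\widehat{V}_t,\widehat{V}_t}=Z^t$ is positive semidefinite by Claim~\ref{claimStilde}; hence $W$ admits a positive semidefinite completion $\widehat{Y}\in\S^{n+k\ell}_+$. Since the entire submatrix on each clique $\widehat{V}_t$ belongs to the specified pattern, $\widehat{Y}_{\widehat{V}_t,\widehat{V}_t}=W_{\widehat{V}_t,\widehat{V}_t}=Z^t$ for all $t\in[k]$, so $\widehat{Y}\in\widehat{\S}\big(\{Z^t:t\in[k]\},\widehat{G}\big)$ and the set is nonempty.

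\textbf{Main obstacle.} The delicate step is the consistency check, precisely because the $Z^t$ were obtained by running the rank reduction of Claim~\ref{claimStilde} in each block independently, with no coordination across the tree. What rescues the argument is the structural fact that each pairwise overlap $\widehat{V}_t\cap\widehat{V}_{t'}$ consists only of diagonal positions and edges of $\widetilde{G}$, on which every admissible block is pinned to the common reference $\widetilde{X}$. I would want to verify carefully against the explicit edge list \eqref{edgesetblkt} that all three types of pairs ($V$-$V$, $U$-$U$, $V$-$U$) indeed appear as edges in both incident blocks, and to confirm that when $t$ has two children no extra $U$-set sneaks into the overlap $\widehat{V}_t\cap\widehat{V}_{t'}$.
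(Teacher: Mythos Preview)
Your proposal is correct and follows essentially the same route as the paper: identify the parent--child overlap as $(V_t\cap V_{t'})\sqcup U_t$ via \eqref{blockeqv}, verify from \eqref{edgesetblkt} that every pair in this overlap is an edge of $\widetilde{G}$ lying in both blocks, conclude from the definition \eqref{setbfA} that $Z^t$ and $Z^{t'}$ both coincide with $\widetilde{X}$ there, and then invoke a chordal PSD completion result. The only cosmetic differences are that the paper phrases the construction as a recursive block-by-block assignment of a global matrix $\widehat{Z}$ (processing parents before children) and cites Theorem~2.7 of \cite{jiang2017minimum} rather than Grone's theorem for the final completion step.
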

Consider an ordering $t_1,t_2,\ldots,t_k$ of $[k]$ such that the parents appear {\bf before} their children. Such an ordering can be found by breadth-first search. We recursively construct a matrix $\widehat{Z}\in \S^{n+k\ell}\big(\widehat{\E},0\big)$ as follows:
\begin{itemize}
\item [(1)] For any $i,j\in [n+k\ell]$ such that $i\neq j$ and $ij\notin \widehat{\E},$ let $\widehat{Z}_{ij}=\widehat{Z}_{ji}=0.$
\item [(2)] For $t=t_1,t_2,\ldots,t_k:$ let $\widehat{Z}_{\widehat{V}_t,\widehat{V}_t}:=Z^t.$ 
\end{itemize}
Note that in {item} (2) of the above construction, when we assign entries in the block $\widehat{Z}_{\widehat{V}_t,\widehat{V}_t},$ some entries of $\widehat{Z}_{\widehat{V}_t,\widehat{V}_t}$ might have been assigned before because different blocks may intersect. From the running intersection property of tree decomposition, those entries that have been assigned are in the block $\widehat{Z}_{\widehat{V}_p\cap\widehat{V}_t,\widehat{V}_p\cap\widehat{V}_t},$ where $p\in {\rm par}(t).$ From (\ref{blockeqv}), we know that 
\begin{equation}\label{intesectblk}
\widehat{V}_p\cap\widehat{V}_t= U_t\sqcup \(V_p\cap V_t\).
\end{equation}
From (\ref{edgesetblkt}) we have that
\begin{equation}\label{subsetedge}
E\( U_t\sqcup \(V_p\cap V_t\) \)\subset 
\E\big( \widetilde{G}\big[ \widetilde{V}_p \big] \big)\cap 
\E\big( \widetilde{G}\big[ \widetilde{V}_t \big] \big) .
\end{equation}
Because $Z^t\in \widetilde{\S}_+\big(\widetilde{X},t\big)$ and $Z^p\in \widetilde{\S}_+\big(\widetilde{X},p\big),$ from (\ref{setbfA}) and (\ref{subsetedge}), we have that
\begin{equation}\label{welldefiZ}
Z^t_{U_t\sqcup \(V_p\cap V_t\),U_t\sqcup \(V_p\cap V_t\)}=\widetilde{X}_{U_t\sqcup \(V_p\cap V_t\),U_t\sqcup \(V_p\cap V_t\)}=Z^p_{U_t\sqcup \(V_p\cap V_t\),U_t\sqcup \(V_p\cap V_t\)},
\end{equation}
which means that the assigned entries in $\widehat{Z}_{\widehat{V}_p\cap\widehat{V}_t,\widehat{V}_p\cap\widehat{V}_t}$ {based on $Z^p$ and $Z^t$} are {consistent}. Thus there is no conflict and hence $\widehat{Z}$ is well-defined. Because $\widehat{Z}\in \S^{n+k\ell}\big(\widehat{\E},0\big)$ and for any $t\in [k],$ $\widehat{Z}_{\widehat{V}_t,\widehat{V}_t}=Z^t,$ from Theorem 2.7 in \cite{jiang2017minimum}, we know that $\widehat{\S}\big( \left\{Z^t:\ t\in [k]\right\},\widehat{G} \big)\neq\emptyset.$ This completes the proof of Claim~\ref{nonempclaim}.

From Claim~\ref{nonempclaim}, we can use algorithm 3.1 in \cite{jiang2017minimum} to find $\widehat{Y}\in\widehat{\S}\big(\left\{Z^t:\ t\in [k]\right\},\widehat{G} \big) $ such that 
\begin{equation}\label{whatYcond1}
\rr\big(\widehat{Y}\big)\leq \max\left\{\rr(Z^t):\ t\in [k]\right\}\leq \tw(G)+\varphi(\ell)+1.
\end{equation}
For any $t\in [k],$ because $Z^t\in \widetilde{\S}_+\big(\widetilde{X},t\big)$ and $\widehat{Y}_{\widehat{V}_t,\widehat{V}_t}=Z^t,$ we have that $\widehat{Y}_{\widehat{V}_t,\widehat{V}_t}\in \widetilde{\S}_+\big(\widetilde{X},t\big).$ 
From (\ref{equivS}), we have that $\widehat{Y}\in \widetilde{\S}_+\big( \widetilde{X},\widetilde{G} \big).$ Let $Y:=\widehat{Y}_{1:n,1:n}.$ Then $Y$ is an optimal solution of (\ref{SDP}) of rank at most $\tw(G)+\varphi(\ell)+1.$
\end{proof}


\subsection{Proof of Proposition~\ref{propphi}}\label{Subsec:pfphi}

\begin{proof}

\noindent{\bf Step 1.} $\varphi(\ell)\leq \left\lfloor\(\sqrt{12\ell(\ell+1)+1}-1\)/2 \right\rfloor.$

\medskip

For any $U\in \R^{2\ell\times \ell}$ and $M\in \S^\ell.$ Suppose the set (\ref{cvxset}) is nonempty. Since there are $3\ell(\ell+1)/2$ linear constraints in (\ref{cvxset}), from the Barvinok-Pataki bound, we have that it has an element of rank at most $\left\lfloor\(\sqrt{12\ell(\ell+1)+1}-1\)/2 \right\rfloor.$

\medskip 

\noindent{\bf Step 2.} $\ell+1\leq \varphi(\ell).$

\medskip

Consider $U:=[I_\ell;I_\ell]$ and $M:=\dd ( [4\cdot {\bf 1}_{(\ell-1)\times 1};3]).$ It is easy the check that the following matrix is inside the set (\ref{cvxset}):
\begin{equation}\label{matelement}
\begin{bmatrix}
I_\ell & \dd ( [{\bf 1}_{(\ell-1)\times 1};1/2])\\
\dd ( [{\bf 1}_{(\ell-1)\times 1};1/2]) & I_\ell
\end{bmatrix},
\end{equation}
which implies that the set (\ref{cvxset}) is nonempty. For any $Y$ inside the set (\ref{cvxset}), it has the following matrix form 
\begin{equation}\label{matY222}
Y=\begin{bmatrix}
I_\ell & R^\top\\
R & I_\ell
\end{bmatrix},
\end{equation}
for some $R\in \R^{\ell\times \ell}.$ Because $Y\in \S^{2\ell}_+,$ we have that $\|R\|_2\leq 1,$ i.e., the largest singular value of $R$ is upper bounded by $1.$ From the constraint $U^\top Y U=M,$ we have that $R+R^\top=M-2I_\ell,$ which further implies that
\begin{equation}\label{matRskew}
R=\dd ( [{\bf 1}_{(\ell-1)\times 1};1/2])+\Omega,
\end{equation}
for some skew-symmetric matrix $\Omega\in \R^{\ell\times \ell}.$ If $\Omega\neq {\bf 0}_{\ell\times \ell},$ then a certain principle minor in $R$ will have the following form
\begin{equation}\label{Pmino}
\begin{bmatrix}
1&\beta\\-\beta& \alpha
\end{bmatrix},
\end{equation}
where $\alpha\in \{1/2,1\}$ and $\beta\neq 0.$ Simple calculations show that the singular values of the above matrix are
\begin{equation}\label{singvp}
\sqrt{\frac{2\beta^2+\alpha^2+1\pm \sqrt{(1-\alpha^2)^2+4\beta^2(\alpha-1)^2}}{2}},
\end{equation}
where the larger one is greater than $1.$ This implies that $\|R\|_2>1,$ which is a contradiction. Therefore, we have that $\Omega={\bf 0}_{\ell\times \ell}.$ From (\ref{matY222}) and  (\ref{matRskew}), we have that $Y$ is exactly the matrix (\ref{matelement}), whose rank is $\ell+1.$ This implies that the matrix (\ref{matelement}) is the unique element in the set (\ref{cvxset}) and hence $\varphi(\ell)\geq \ell+1.$ 
\end{proof}

\subsection{Proof of Theorem~\ref{lbthmtree}}\label{Subsec:pflb}

\begin{proof}
We first prove case {(i)}. For any $\omega,\ell\in \N$ and $n\geq \omega+\ell+1,$ from Lemma~\ref{omega=0}, there exists an SDP problem (\ref{exSDP}) with matrix dimension $\ell+1$ and $(G:=([\ell+1],\emptyset),\ell)-$SPLR structure. Moreover, any of its optimal solution has rank $\ell+1.$ By using Lemma~\ref{extlem}, we may extend this problem into an SDP problem of the form (\ref{extSDP}) with matrix dimension $n$ and $\big(\widehat{G},\ell\big)-$SPLR structure such that $\tw\big(\widehat{G}\big)=\pw\big(\widehat{G}\big)=\omega$ and any of its optimal solution has rank at least $\omega+\ell+1.$ Here we have used the fact that $\widehat{G}$ is the sparsity pattern {of a} block arrow matrix whose tree-width equals the path-width. Note that the corresponding notations in Lemma~\ref{extlem} should be $\sigma=\omega,$ $\hat{n}=n$ and $r=\ell+1.$ This completes the proof of case {(i)}.

Now, we {prove} the more complicated case {(ii)}. From Lemma~\ref{223}, there exists $M_1,M_2,M_3\in \S^\ell_+$ such that the following set is nonempty and any element has rank at least $\varphi(\ell):$ 
\begin{multline}\label{setBpro}
\B:=\big\{ X\in \S^{3\ell}_+:\ X_{1:\ell,1:\ell}=M_1,\ X_{\ell+1:2\ell,\ell+1:2\ell}=M_2,\ X_{2\ell+1:3\ell,2\ell+1:3\ell}=M_3,\\
 [I_\ell,I_\ell,-I_\ell]X[I_\ell;I_\ell;-I_\ell]={\bf 0}_{\ell\times \ell} \Big\}.
\end{multline}
Consider the following problem
\begin{multline}\label{exSDP6pro}
\min\Big\{ 0:\ X_{4\ell+1:5\ell,1:\ell}={\bf 0}_{\ell\times \ell},\ X_{4\ell+1:5\ell,2\ell+1:3\ell}={\bf 0}_{\ell\times \ell},\ X_{5\ell+1:6\ell,1:2\ell}={\bf 0}_{\ell\times 2\ell},\\
X_{3\ell+1:4\ell,3\ell+1:4\ell}=I_\ell+M_1,\ X_{4\ell+1:5\ell,4\ell+1:5\ell}=I_\ell+M_2,\  X_{5\ell+1:6\ell,5\ell+1:6\ell}=I_\ell+M_3,\\
X_{1:3\ell,1:3\ell}=I_{3\ell},\ X_{3\ell+1:4\ell,\ell+1:3\ell}={\bf 0}_{\ell\times 2\ell},\ V^\top X V={\bf 0}_{\ell\times \ell},\ X\in \S^{6\ell}_+\Big\},
\end{multline}
where $V:=[-I_\ell;-I_\ell;-I_\ell;I_\ell;I_\ell;-I_\ell]\in \R^{6\ell\times \ell}.$ All the affine constraints in (\ref{exSDP6pro}) can be written in the following matrix form
\begin{equation}\label{spXpro}
V^\top\(X=\begin{bmatrix}
I_\ell & {\bf 0}_{\ell\times \ell} & {\bf 0}_{\ell\times \ell} & Y_1^\top & {\bf 0}_{\ell\times \ell} & {\bf 0}_{\ell\times \ell}\\
 {\bf 0}_{\ell\times \ell}& I_\ell & {\bf 0}_{\ell\times \ell} & {\bf 0}_{\ell\times \ell} & Y_2^\top & {\bf 0}_{\ell\times \ell}\\
  {\bf 0}_{\ell\times \ell}& {\bf 0}_{\ell\times \ell} & I_\ell & {\bf 0}_{\ell\times \ell}  & {\bf 0}_{\ell\times \ell} & Y_3^\top\\
Y_1 & {\bf 0}_{\ell\times \ell} & {\bf 0}_{\ell\times \ell} & I_\ell+M_1 & Z_1^\top & Z_2^\top \\
{\bf 0}_{\ell\times \ell}  & Y_2 & {\bf 0}_{\ell\times \ell} & Z_1& I_\ell+M_2  & Z_3^\top \\
{\bf 0}_{\ell\times \ell}  & {\bf 0}_{\ell\times \ell}  & Y_3  & Z_2& Z_3  & I_\ell+M_3 \\
\end{bmatrix}\)V={\bf 0}_{\ell\times \ell},
\end{equation}
where $Y_1,Y_2,Y_3,Z_1,Z_2,Z_3\in \R^{\ell\times \ell}$ are the undetermined blocks. If  we consider the first 8 linear constraints in (\ref{exSDP6pro}) as the sparse part and the last one as the low-rank part, then problem $(\ref{exSDP6pro})$ has $(G,\ell)-$SPLR structure with the following aggregate sparsity pattern:
\begin{equation}\label{sparepatternG}
\begin{bmatrix}
* & * & * & \empty & * & *\\
*& * & * & * & \empty & *\\
*& * & * & *  & * & \empty\\
\empty & * & * & * & \empty & \empty \\
*  & \empty & * & \empty& *  & \empty \\
*  & *  & \empty  & \empty & \empty & * \\
\end{bmatrix},
\end{equation}
where $*$'s denote the blocks that appear in certain sparse constraints in (\ref{exSDP6pro}) while the empty parts denote the blocks that don't appear in any sparse constraints {and they correspond} to the variables $Y_i$'s, $Z_i$'s in (\ref{spXpro}). From (\ref{sparepatternG}), it is easy to see that $G$ is a chordal graph with tree-width $\tw(G)=3\ell-1.$ 

\medskip

\noindent{\bf Step 1.} The problem (\ref{exSDP6pro}) has an optimal solution.

\medskip

Consider $M\in \B$ such that
\begin{equation}\label{3b3pro}
M=\begin{bmatrix}
M_1&Z_1^\top & Z_2^\top\\
Z_1&M_2&Z_3^\top\\
Z_2&Z_3&M_3
\end{bmatrix}
\end{equation}
for some $Z_1,Z_2,Z_3\in \R^{\ell\times \ell}.$ Consider the following matrix
\begin{equation}\label{6lmatex}
X=\begin{bmatrix}
I_\ell & {\bf 0}_{\ell\times \ell} & {\bf 0}_{\ell\times \ell} & I_\ell & {\bf 0}_{\ell\times \ell} & {\bf 0}_{\ell\times \ell}\\
 {\bf 0}_{\ell\times \ell}& I_\ell & {\bf 0}_{\ell\times \ell} & {\bf 0}_{\ell\times \ell} & I_\ell & {\bf 0}_{\ell\times \ell}\\
  {\bf 0}_{\ell\times \ell}& {\bf 0}_{\ell\times \ell} & I_\ell & {\bf 0}_{\ell\times \ell}  & {\bf 0}_{\ell\times \ell} & -I_\ell\\
I_\ell & {\bf 0}_{\ell\times \ell} & {\bf 0}_{\ell\times \ell} & I_\ell+M_1 & Z_1^\top & Z_2^\top \\
{\bf 0}_{\ell\times \ell}  & I_\ell & {\bf 0}_{\ell\times \ell} & Z_1& I_\ell+M_2  & Z_3^\top \\
{\bf 0}_{\ell\times \ell}  & {\bf 0}_{\ell\times \ell}  & -I_\ell  & Z_2& Z_3  & I_\ell+M_3 \\
\end{bmatrix}.
\end{equation}
From (\ref{spXpro}), $X$ satisfies the sparse constraints in (\ref{exSDP6pro}). Note that the Schur complement of $X$ with respect to the first $3\ell$ by $3\ell$ block is exactly $M.$ Because $M\in \S^{3\ell}_+,$ we have that $X\in \S^{6\ell}_+.$ It remains to show that $V^\top X V={\bf 0}_{\ell\times \ell}.$ Because $M\in \B,$ we have that $M[I_\ell;I_\ell;-I_\ell]={\bf 0}_{3\ell\times \ell}.$ Combining this and the matrix form (\ref{6lmatex}), we can see that $XV={\bf 0}_{6\ell\times \ell}.$ Thus, $V^\top X V={\bf 0}_{\ell\times \ell}$ and hence $X$ is an optimal solution of (\ref{exSDP6pro}).

\medskip

\noindent{\bf Step 2.} Any optimal solution of the problem (\ref{exSDP6pro}) has rank at least $3\ell+\varphi(\ell).$ 

\medskip

Suppose $X$ is an optimal solution of (\ref{exSDP6pro}). From the constraints $V^\top X V={\bf 0}_{\ell\times \ell}$ and $X\in \S^{6\ell}_+,$ we have that $XV={\bf 0}_{6\ell\times \ell}.$ From the first $3\ell$ rows of the equation $XV={\bf 0}_{6\ell\times \ell},$ we have that in the matrix form (\ref{spXpro}), $Y_1=I_\ell,$ $Y_2=I_\ell$ and $Y_3=-I_\ell.$ Thus, $X$ still has the matrix form in (\ref{6lmatex}) in Step 1. From the constraint $V^\top X V={\bf 0}_{\ell\times \ell}$ and the matrix form (\ref{6lmatex}), we have that the Schur complement of $X$ with respect to 
 the first $3\ell$ by $3\ell$ block, which has the same matrix form as (\ref{3b3pro}), is inside $\B.$ Because any element of $\B$ has rank at least $\varphi(\ell),$ we have that $\rr\(X\)\geq 3\ell+\varphi(\ell).$

\medskip

\noindent{\bf Step 3.} Extending the problem (\ref{exSDP6pro}) into higher dimensional space.

From Step 2, we know that (\ref{exSDP6pro}) is an SDP problem with matrix dimension $6\ell$ and $(G,\ell)-$SPLR structure such that $\tw(G)=3\ell-1.$ Its optimal solution has rank at least $3\ell+\varphi(\ell).$ From Lemma~\ref{extlem}, we can see that for any $\omega\geq 3\ell-1$ and $n\geq \omega+3\ell+1,$ there exists an SDP problem (\ref{SDP}) with $\big(\widehat{G},\ell\big)-$SPLR structure such that $\tw\big(\widehat{G}\big)=\omega$ and any of its optimal solution has rank at least $\omega+\varphi(\ell)+1.$ Note that we apply Lemma~\ref{extlem} with $\sigma=\omega-3\ell+1,$ $\hat{n}=n$ and $r=3\ell+\varphi(\ell).$ This problem satisfies the conditions in Theorem~\ref{lbthmtree}. The proof is completed.
\end{proof}


\section{Conclusion}\label{Sec-conc}

In this study, we derive {a} unified framework called sparse extension to transfer {an} SDP problem with sparse plus low-rank data matrices into a sparse SDP problem with bounded tree-width. Based on this, we derive rank bounds for SDP problems with SPLR structure that are tight in the worst case. In the future, it would be interesting to design efficient algorithms for solving SDPs with SPLR structure and conduct {computational} complexity analysis.

\appendix

\section{Illustration of the sparse extension in Section~\ref{Sec:SET}}\label{app-illus}

In this subsection, we use some figures to illustrate the graph operations in the sparse extension in Section~\ref{Sec:SET}. For simplicity, we assume $k=7.$ 


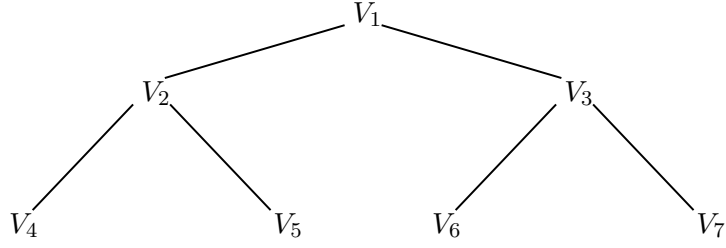
\begin{figure}[!h]      
\centering
\tikzset{every picture/.style={line width=0.75pt}} 

\begin{tikzpicture}[x=1pt,y=1pt,yscale=-1,xscale=1]
\draw (0,0) node [anchor=north west][inner sep=0.75pt]   [align=left] {$V_1$};
\draw    (-2,10) -- (-70,30) ; 
\draw    (12,10) -- (80,30) ; 

\draw (-80,30) node [anchor=north west][inner sep=0.75pt]   [align=left] {$V_2$};
\draw    (-82,40) -- (-120,80) ; 
\draw    (-68,40) -- (-30,80) ; 

\draw (80,30) node [anchor=north west][inner sep=0.75pt]   [align=left] {$V_3$};
\draw    (78,40) -- (40,80) ; 
\draw    (92,40) -- (130,80) ; 

\draw (-130,80) node [anchor=north west][inner sep=0.75pt]   [align=left] {$V_4$};

\draw (-30,80) node [anchor=north west][inner sep=0.75pt]   [align=left] {$V_5$};

\draw (30,80) node [anchor=north west][inner sep=0.75pt]   [align=left] {$V_6$};

\draw (130,80) node [anchor=north west][inner sep=0.75pt]   [align=left] {$V_7$};

\end{tikzpicture}
\caption{The binary clique tree decomposition of $G$ in Subsection~\ref{Subsec:spex}.}
\label{fig:tree decomposition lem}
\end{figure}

\begin{figure}[!h]      
\centering
\tikzset{every picture/.style={line width=0.75pt}} 

\begin{tikzpicture}[x=1pt,y=1pt,yscale=-1,xscale=1]
\draw (0,0) node [anchor=north west][inner sep=0.75pt]   [align=left] {$W_1=V_1$};
\draw    (-2,10) -- (-70,30) ; 
\draw    (12,10) -- (80,30) ; 

\draw (-90,30) node [anchor=north west][inner sep=0.75pt]   [align=left] {$W_2=V_2\setminus V_1$};
\draw    (-82,40) -- (-120,80) ; 
\draw    (-68,40) -- (-30,80) ; 

\draw (70,30) node [anchor=north west][inner sep=0.75pt]   [align=left] {$W_3=V_3\setminus V_1$};
\draw    (78,40) -- (40,80) ; 
\draw    (92,40) -- (130,80) ; 

\draw (-140,80) node [anchor=north west][inner sep=0.75pt]   [align=left] {$W_4=V_4\setminus V_2$};

\draw (-45,80) node [anchor=north west][inner sep=0.75pt]   [align=left] {$W_5=V_5\setminus V_2$};

\draw (33,80) node [anchor=north west][inner sep=0.75pt]   [align=left] {$W_6=V_6\setminus V_3$};

\draw (120,80) node [anchor=north west][inner sep=0.75pt]   [align=left] {$W_7=V_7\setminus V_3$};

\end{tikzpicture}
\caption{({\bf Step 1} in Subsection~\ref{Subsec:spex}) Partition of $[n]$ according to the tree decomposition of $G$.}
\label{fig:tree decomposition partition}
\end{figure}
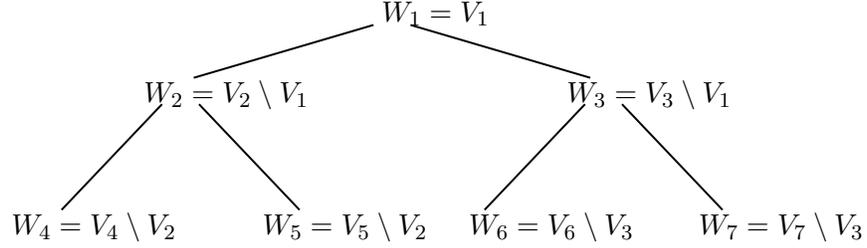

\begin{figure}[!h]      
\centering
\tikzset{every picture/.style={line width=0.75pt}} 

\begin{tikzpicture}[x=1pt,y=1pt,yscale=-1,xscale=1]
\draw (0,0) node [anchor=north west][inner sep=0.75pt]   [align=left] {$V_1$};
\draw    (-2,10) -- (-70,50) ; 
\draw    (12,10) -- (80,50) ; 
\draw (0,-40) node [anchor=north west][inner sep=0.75pt]   [align=left] {$U_1$};
\draw (4,-2) -- (4,-28); 
\draw (6,-2) -- (6,-28); 

\draw (-65,3)--(0,3); 
\draw (-65,5)--(0,5); 

\draw (12,3)--(77,3); 
\draw (12,5)--(77,5); 

\draw (-80,50) node [anchor=north west][inner sep=0.75pt]   [align=left] {$V_2$};
\draw    (-82,60) -- (-120,100) ; 
\draw    (-68,60) -- (-30,100) ; 
\draw (-80,53) -- (-115,53);
\draw (-80,55) -- (-115,55);
\draw (-30,53) -- (-65,53);
\draw (-30,55) -- (-65,55);

\draw (-80,0) node [anchor=north west][inner sep=0.75pt]   [align=left] {$U_2$};
\draw (-74,48) -- (-74,12); 
\draw (-76,48) -- (-76,12); 

\draw (80,50) node [anchor=north west][inner sep=0.75pt]   [align=left] {$V_3$};
\draw    (78,60) -- (40,100) ; 
\draw    (92,60) -- (130,100) ; 
\draw (80,0) node [anchor=north west][inner sep=0.75pt]   [align=left] {$U_3$};
\draw (84,48) -- (84,12); 
\draw (86,48) -- (86,12); 
\draw (45,53) -- (80,53);
\draw (45,55) -- (80,55);
\draw (95,53) -- (130,53);
\draw (95,55) -- (130,55);

\draw (-130,100) node [anchor=north west][inner sep=0.75pt]   [align=left] {$V_4$};
\draw (-130,50) node [anchor=north west][inner sep=0.75pt]   [align=left] {$U_4$};
\draw (-124,98) -- (-124,62); 
\draw (-126,98) -- (-126,62); 

\draw (-30,100) node [anchor=north west][inner sep=0.75pt]   [align=left] {$V_5$};
\draw (-30,50) node [anchor=north west][inner sep=0.75pt]   [align=left] {$U_5$};
\draw (-24,98) -- (-24,62); 
\draw (-26,98) -- (-26,62); 

\draw (30,100) node [anchor=north west][inner sep=0.75pt]   [align=left] {$V_6$};
\draw (30,50) node [anchor=north west][inner sep=0.75pt]   [align=left] {$U_6$};
\draw (36,98) -- (36,62); 
\draw (34,98) -- (34,62); 

\draw (130,100) node [anchor=north west][inner sep=0.75pt]   [align=left] {$V_7$};
\draw (130,50) node [anchor=north west][inner sep=0.75pt]   [align=left] {$U_7$};
\draw (136,98) -- (136,62); 
\draw (134,98) -- (134,62); 

\end{tikzpicture}
\caption{({\bf Step 2} in Subsection~\ref{Subsec:spex}) Extension $G$ to $\widetilde{G}$, where $\widetilde{G}$ may not be a chordal graph.}
\label{fig:tree extension}
\end{figure}
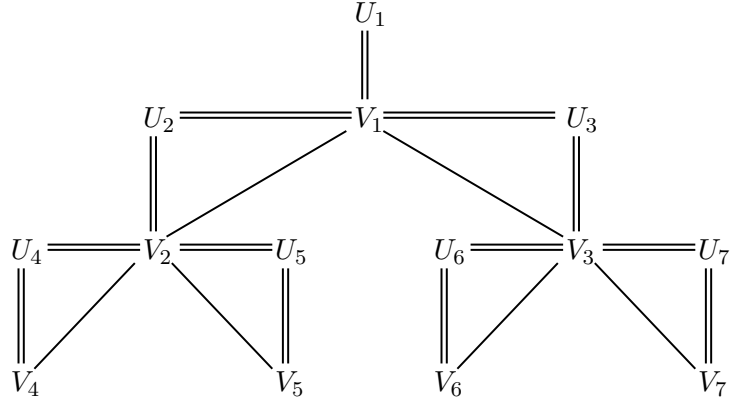

\begin{figure}[!h]      
\centering
\tikzset{every picture/.style={line width=0.75pt}} 

\begin{tikzpicture}[x=1pt,y=1pt,yscale=-1,xscale=1]
\draw (0,0) node [anchor=north west][inner sep=0.75pt]   [align=left] {$V_1$};
\draw (0,-30) node [anchor=north west][inner sep=0.75pt]   [align=left] {$U_1$};
\draw (-34,20) node [anchor=north west][inner sep=0.75pt]   [align=left] {$U_2$};
\draw (34,20) node [anchor=north west][inner sep=0.75pt]   [align=left] {$U_3$};
\draw (4,0)--(4,-20); 
\draw (6,0)--(6,-20); 

\draw (0,7)--(-23,20); 
\draw (1,9)--(-22,22); 

\draw (13,7)--(36,20); 
\draw (12,9)--(35,22); 


\draw (-36,30)--(-90,60); 


\draw (48,30)--(102,60); 

\draw (-100,90) node [anchor=north west][inner sep=0.75pt]   [align=left] {$V_2$};
\draw (-100,60) node [anchor=north west][inner sep=0.75pt]   [align=left] {$U_2$};
\draw (-134,110) node [anchor=north west][inner sep=0.75pt]   [align=left] {$U_4$};
\draw (-66,110) node [anchor=north west][inner sep=0.75pt]   [align=left] {$U_5$};
\draw (-94,90)--(-94,70); 
\draw (-96,90)--(-96,70); 
\draw (-100,97)--(-123,110); 
\draw (-99,99)--(-122,112); 
\draw (-87,97)--(-64,110); 
\draw (-88,99)--(-65,112); 


\draw (-130,120)--(-130,140); 


\draw (-62,120)--(-62,140);

\draw (100,90) node [anchor=north west][inner sep=0.75pt]   [align=left] {$V_3$};
\draw (100,60) node [anchor=north west][inner sep=0.75pt]   [align=left] {$U_3$};
\draw (66,110) node [anchor=north west][inner sep=0.75pt]   [align=left] {$U_6$};
\draw (134,110) node [anchor=north west][inner sep=0.75pt]   [align=left] {$U_7$};
\draw (104,90)--(104,70); 
\draw (106,90)--(106,70); 
\draw (100,97)--(77,110); 
\draw (101,99)--(78,112); 
\draw (113,97)--(136,110); 
\draw (112,99)--(135,112); 


\draw (70,120)--(70,140); 


\draw (138,120)--(138,140); 

\draw (-134,140) node [anchor=north west][inner sep=0.75pt]   [align=left] {$U_4$};
\draw (-134,160) node [anchor=north west][inner sep=0.75pt]   [align=left] {$V_4$};
\draw (-128,150)--(-128,160); 
\draw (-130,150)--(-130,160); 

\draw (-66,140) node [anchor=north west][inner sep=0.75pt]   [align=left] {$U_5$};
\draw (-66,160) node [anchor=north west][inner sep=0.75pt]   [align=left] {$V_5$};
\draw (-60,150)--(-60,160); 
\draw (-62,150)--(-62,160); 

\draw (66,140) node [anchor=north west][inner sep=0.75pt]   [align=left] {$U_6$};
\draw (66,160) node [anchor=north west][inner sep=0.75pt]   [align=left] {$V_6$};
\draw (70,150)--(70,160); 
\draw (72,150)--(72,160); 

\draw (134,140) node [anchor=north west][inner sep=0.75pt]   [align=left] {$U_7$};
\draw (134,160) node [anchor=north west][inner sep=0.75pt]   [align=left] {$V_7$};
\draw (138,150)--(138,160); 
\draw (140,150)--(140,160); 

\end{tikzpicture}
\caption{({\bf Step 2} in Subsection~\ref{Subsec:spex}) The binary tree decomposition $\big(T,\widetilde{\V}\big)$ of $\widetilde{G}.$}
\label{fig:tree decomposition extention}
\end{figure}

\begin{figure}[!h]      
\centering
\tikzset{every picture/.style={line width=0.75pt}} 

\begin{tikzpicture}[x=1pt,y=1pt,yscale=-1,xscale=1]
\draw (0,0) node [anchor=north west][inner sep=0.75pt]   [align=left] {$V_1$};
\draw (0,-30) node [anchor=north west][inner sep=0.75pt]   [align=left] {$U_1$};
\draw (-34,20) node [anchor=north west][inner sep=0.75pt]   [align=left] {$U_2$};
\draw (34,20) node [anchor=north west][inner sep=0.75pt]   [align=left] {$U_3$};
\draw (4,0)--(4,-20); 
\draw (6,0)--(6,-20); 

\draw (0,7)--(-23,20); 
\draw (1,9)--(-22,22); 

\draw (13,7)--(36,20); 
\draw (12,9)--(35,22); 

\draw (0,-26)--(-31,20); 
\draw (0,-21)--(-28,20); 

\draw (13,-26)--(41,20); 
\draw (13,-21)--(38,20); 

\draw (-22,23)--(34,23); 
\draw (-22,25)--(34,25); 


\draw (-36,30)--(-90,70); 


\draw (48,30)--(102,70);

\draw (-100,100) node [anchor=north west][inner sep=0.75pt]   [align=left] {$V_2$};
\draw (-100,70) node [anchor=north west][inner sep=0.75pt]   [align=left] {$U_2$};
\draw (-134,120) node [anchor=north west][inner sep=0.75pt]   [align=left] {$U_4$};
\draw (-66,120) node [anchor=north west][inner sep=0.75pt]   [align=left] {$U_5$};
\draw (-94,100)--(-94,80); 
\draw (-96,100)--(-96,80); 
\draw (-100,107)--(-123,120); 
\draw (-99,109)--(-122,122); 
\draw (-87,107)--(-64,120); 
\draw (-88,109)--(-65,122); 

\draw (-100,75)--(-131,121); 
\draw (-100,80)--(-128,121); 

\draw (-86,74)--(-59,120); 
\draw (-86,79)--(-62,120); 

\draw (-122,123)--(-66,123); 
\draw (-122,125)--(-66,125); 


\draw (-130,130)--(-130,150); 


\draw (-62,130)--(-62,150);

\draw (100,100) node [anchor=north west][inner sep=0.75pt]   [align=left] {$V_3$};
\draw (100,70) node [anchor=north west][inner sep=0.75pt]   [align=left] {$U_3$};
\draw (66,120) node [anchor=north west][inner sep=0.75pt]   [align=left] {$U_6$};
\draw (134,120) node [anchor=north west][inner sep=0.75pt]   [align=left] {$U_7$};
\draw (104,100)--(104,80); 
\draw (106,100)--(106,80); 
\draw (100,107)--(77,120); 
\draw (101,109)--(78,122); 
\draw (113,107)--(136,120); 
\draw (112,109)--(135,122); 

\draw (100,74)--(69,120); 
\draw (100,79)--(72,120); 

\draw (114,74)--(141,120); 
\draw (114,79)--(138,120); 

\draw (78,123)--(134,123); 
\draw (78,125)--(134,125); 


\draw (70,130)--(70,150); 


\draw (138,130)--(138,150); 

\draw (-134,150) node [anchor=north west][inner sep=0.75pt]   [align=left] {$U_4$};
\draw (-134,170) node [anchor=north west][inner sep=0.75pt]   [align=left] {$V_4$};
\draw (-128,160)--(-128,170); 
\draw (-130,160)--(-130,170); 

\draw (-66,150) node [anchor=north west][inner sep=0.75pt]   [align=left] {$U_5$};
\draw (-66,170) node [anchor=north west][inner sep=0.75pt]   [align=left] {$V_5$};
\draw (-60,160)--(-60,170); 
\draw (-62,160)--(-62,170); 

\draw (66,150) node [anchor=north west][inner sep=0.75pt]   [align=left] {$U_6$};
\draw (66,170) node [anchor=north west][inner sep=0.75pt]   [align=left] {$V_6$};
\draw (70,160)--(70,170); 
\draw (72,160)--(72,170); 

\draw (134,150) node [anchor=north west][inner sep=0.75pt]   [align=left] {$U_7$};
\draw (134,170) node [anchor=north west][inner sep=0.75pt]   [align=left] {$V_7$};
\draw (138,160)--(138,170); 
\draw (140,160)--(140,170); 

\end{tikzpicture}
\caption{(Proof of Theorem~\ref{ubthmtree}) The binary clique tree decomposition 
$\big(T,\widehat{\V}\big)$ of the chordal completion graph $\widehat{G}$.}
\label{fig:tree decomposition completion}
\end{figure}

\section{Some results needed in Subsection~\ref{Subsec:pfthm} and \ref{Subsec:pfphi}}\label{app-useful}

\begin{lem}\label{223}
For any $\ell\in \N^+,$ $\varphi(\ell)$ is the smallest integer such that for any $M_1,M_2,M_3\in \S^\ell_+,$ the following set
\begin{multline}\label{lemsetB}
\B:=\Big\{ X\in \S^{3\ell}_+:\ X_{1:\ell,1:\ell}=M_1,\ X_{\ell+1:2\ell,\ell+1:2\ell}=M_2,\ X_{2\ell+1:3\ell,2\ell+1:3\ell}=M_3,\\
 [I_\ell,I_\ell,-I_\ell]X[I_\ell;I_\ell;-I_\ell]={\bf 0}_{\ell\times \ell} \Big\}.
\end{multline}
is either an empty set or has an element of rank at most $\varphi(\ell),$ {where $\varphi(\ell)$ is given in Definition~\ref{cvxnb}.} 
\end{lem}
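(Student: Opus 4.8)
The plan is to prove that $\varphi(\ell)$ from Definition~\ref{cvxnb} equals $\psi(\ell)$, the smallest integer such that every set $\B$ of the form \eqref{lemsetB} is either empty or contains a matrix of rank at most $\psi(\ell)$ (such an integer exists, e.g.\ by the Barvinok--Pataki bound as in the proof of Proposition~\ref{propphi}). The bridge is a rank-preserving dictionary between the two families of feasibility problems. First I would eliminate the third block of $\B$: if $X\in\B$ and $X=RR^\top$ with $R=[R_1;R_2;R_3]$, then the constraint $[I_\ell,I_\ell,-I_\ell]X[I_\ell;I_\ell;-I_\ell]={\bf 0}_{\ell\times\ell}$ forces $R_3=R_1+R_2$; hence the leading $2\ell\times 2\ell$ principal submatrix $Y$ of $X$ has the same rank as $X$ and belongs to
$$\B':=\left\{Y\in\S^{2\ell}_+:\ Y_{1:\ell,1:\ell}=M_1,\ Y_{\ell+1:2\ell,\ell+1:2\ell}=M_2,\ [I_\ell,I_\ell]Y[I_\ell;I_\ell]=M_3\right\},$$
and conversely every $Y\in\B'$ extends (by $X_{i3}:=Y_{i1}+Y_{i2}$, $i=1,2$, and $X_{33}:=M_3$) to a member of $\B$ of the same rank, so $X\mapsto Y$ is a rank-preserving bijection. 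When $M_1,M_2$ are positive definite, writing $M_i=L_iL_i^\top$ with $L_i$ invertible and applying the rank-preserving congruence $Y\mapsto\mathrm{blkdiag}(L_1^{-1},L_2^{-1})\,Y\,\mathrm{blkdiag}(L_1^{-1},L_2^{-1})^\top$ carries $\B'$ onto the set \eqref{cvxset} with $U=[L_1^\top;L_2^\top]$ and $M=M_3$; conversely every instance of \eqref{cvxset} with $U=[U_1;U_2]$ and $U_1,U_2$ invertible is of this form, with $M_i=U_i^\top U_i$ and $M_3=M$.

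For the inequality $\psi(\ell)\le\varphi(\ell)$ I would argue by perturbation. Given $M_1,M_2,M_3\in\S^\ell_+$ with $\B\ne\emptyset$, pick $Y\in\B'$; for each $t>0$ the matrix $Y+tI_{2\ell}$ lies in the set $\B'$ associated with the data $(M_1+tI_\ell,\ M_2+tI_\ell,\ M_3+2tI_\ell)$, whose first two diagonal blocks are now positive definite, so by the correspondence above and the definition of $\varphi(\ell)$ this perturbed set contains a matrix $Y^t$ with $\rr(Y^t)\le\varphi(\ell)$. Each $Y^t$ is positive semidefinite with diagonal entries bounded uniformly for $t\in(0,1]$, hence the family $\{Y^t\}$ is bounded; choosing $t_k\downarrow 0$ with $Y^{t_k}\to Y^0$, the limit $Y^0$ lies in $\B'$ for the original data $(M_1,M_2,M_3)$ and, by lower semicontinuity of the rank, $\rr(Y^0)\le\varphi(\ell)$. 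Transporting $Y^0$ back through the first-step bijection produces the required low-rank element of $\B$.

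For the reverse inequality $\psi(\ell)\ge\varphi(\ell)$ I would show that restricting \eqref{cvxset} to instances with $U_1,U_2$ invertible does not lower the worst-case minimal feasible rank. Fix any instance $(U,M)$ of \eqref{cvxset} with nonempty feasible set and a feasible point $Y^\star$; pick $U_n\to U$ with both $\ell\times\ell$ blocks invertible and set $M_n:=U_n^\top Y^\star U_n$, so $(U_n,M_n)\to(U,M)$ (using $U^\top Y^\star U=M$) and each instance $(U_n,M_n)$ is feasible, containing $Y^\star$. A compactness argument --- take minimal-rank feasible matrices, use the fixed diagonal blocks $I_\ell$ for boundedness, pass to a subsequential limit, and invoke lower semicontinuity of the rank --- shows the minimal feasible rank is lower semicontinuous along $(U_n,M_n)$; meanwhile, by the first step each $(U_n,M_n)$ corresponds to an instance of $\B$, so its minimal feasible rank is at most $\psi(\ell)$. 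Letting $n\to\infty$ gives that the minimal feasible rank of $(U,M)$ is at most $\psi(\ell)$, and taking the supremum over all instances $(U,M)$ yields $\varphi(\ell)\le\psi(\ell)$. Combined with the previous paragraph, $\varphi(\ell)=\psi(\ell)$, which is the assertion of the lemma.

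I expect the main obstacle to be the cases where $M_1$ or $M_2$ (equivalently $U_1$ or $U_2$) fails to be invertible, since then the clean congruence reduction breaks down; the perturbation-and-compactness device is designed precisely to bypass this, and the point requiring care is to check that the limiting matrix remains feasible for the \emph{unperturbed} data, so that lower semicontinuity of rank can legitimately be applied.
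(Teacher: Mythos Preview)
Your argument is correct, and the key bijection $\B\leftrightarrow\B'$ together with the congruence to \eqref{cvxset} is a clean structural observation. However, the route differs from the paper's. The paper avoids perturbation and compactness entirely: for the direction $\psi(\ell)\le\varphi(\ell)$ it takes an arbitrary $X=RR^\top\in\B$, uses the relation $R_3=R_1+R_2$ and QR factorizations $R_i=R_i'Q_i$ with $Q_iQ_i^\top=I_\ell$ (valid regardless of the rank of $M_i$) to exhibit directly an instance of \eqref{cvxset} whose low-rank solution, guaranteed by Definition~\ref{cvxnb}, is reassembled into a low-rank element of $\B$. For the reverse direction the paper takes a worst-case instance $(U,M)$ of \eqref{cvxset}, sets $M_i=U_iU_i^\top$ (again without any invertibility hypothesis), and argues by contradiction, using the fact that $R_i'{R_i'}^\top=U_iU_i^\top$ always admits an orthonormal $Q_i'$ with $U_iQ_i'=R_i'$ (after padding). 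Thus the paper's proof is fully constructive and handles the degenerate cases by the same algebra as the nondegenerate ones, whereas your approach trades that explicit construction for a limiting argument. Your version has the advantage of making the rank-preserving correspondence $\B\leftrightarrow\B'\leftrightarrow\eqref{cvxset}$ transparent; the paper's buys a self-contained argument with no topology.
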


\begin{proof}

\noindent{\bf Step 1.} Suppose $\B\neq \emptyset.$ Then $\B$ has an element of rank at most $\varphi(\ell).$

\medskip

Let $X\in \B$ be an element of rank $r.$ If $r\leq \varphi(\ell),$ then we have that $\rr(X)\leq \varphi(\ell).$ Thus, we suppose that $r\geq \varphi(\ell)+1.$ $X$ has decomposition $X=RR^\top$ such that $R\in \R^{3\ell\times r}$ and $R=[R_1;R_2;R_3]$ {satisfies that}
\begin{equation}\label{condR223}
\forall i\in [3],\ R_i\in \R^{\ell\times r}, \;\; R_iR_i^\top=M_i,\ R_1+R_2=R_3.
\end{equation}
From (\ref{condR223}), we have that $(R_1+R_2)(R_1+R_2)^\top=R_3R_3^\top,$ which implies that
\begin{equation}\label{condR123}
M_1+M_2+R_1R_2^\top+R_2R_1^\top=M_3.
\end{equation}
Because $r\geq \varphi(\ell)+1>\ell,$ by considering the reduced QR decompositions, there exists $R_1',R_2',R_3'\in \R^{\ell\times \ell}$ and $Q_1,Q_2,Q_3\in \R^{\ell\times r}$ such that for any $i\in [3],$ $Q_iQ_i^\top=I_\ell$ and $R_i'Q_i=R_i.$ Substitute this into (\ref{condR123}), we get
\begin{equation}\label{condR'123}
R_1'{R_1'}^\top+R_2'{R_2'}^\top+R_1'Q_1Q_2^\top {R_2'}^\top+R_2'Q_2Q_1^\top {R_1'}^\top=M_3,
\end{equation}
which can be reformulated as
\begin{equation}\label{condR'123}
[R_1',R_2']\begin{bmatrix} I_\ell & Q_1Q_2^\top \\ Q_2Q_1^\top & I_\ell \end{bmatrix}[R_1',R_2']^\top=M_3.
\end{equation}
From Definition~\ref{cvxnb}, there exists $W\in \S^{2\ell}_+$ such that $\ell\leq \rr(W)=:r'\leq \varphi(\ell)$ and
\begin{equation}\label{condW22}
[R_1',R_2']\(W=\begin{bmatrix} I_\ell & W_1^\top \\ W_1 & I_\ell \end{bmatrix}\)[R_1',R_2']^\top=M_3
\end{equation}
Then, we can decompose $W=[Q_1';Q_2'][Q_1';Q_2']^\top$ such that $Q_1',Q_2'\in \R^{\ell\times r'}$ and $Q_1'{Q_1'}^\top=Q_2'{Q_2'}^\top=I_\ell.$ Substitute this into (\ref{condW22}) and using the fact that $M_3=R_3'{R_3'}^\top,$ we have that there exists $Q_3'\in \R^{\ell\times r'}$ such that
\begin{equation}\label{condW'22}
R_1'Q_1'+R_2'Q_2'=R_3'Q_3'.
\end{equation}
Define $R':=\left[R_1'Q_1';R_2'Q_2';R_3'Q_3'\right]\in \R^{3\ell\times r'}$ and $X':=R'{R'}^\top.$ From (\ref{condW'22}), we know that $X'\in \B$ and $\rr(X')\leq r'\leq \varphi(\ell).$ 

\bigskip

\noindent{\bf Step 2.} There exists $M_1,M_2,M_3\in \S^\ell$ such that $\B\neq \emptyset$ and any element of $\B$ has rank at least $\varphi(\ell).$ 

\medskip

From Definition~\ref{cvxnb}, we know that there exists $U_1,U_2\in \R^{\ell\times \ell}$ and $M_3\in \S^\ell$ such that the following set is nonempty and any of its element has rank at least $\varphi(\ell)$:
\begin{equation}\label{cvxsetl}
\C:=\left\{ Y\in \S^{2\ell}_+:\ [U_1,U_2] Y[U_1,U_2]^\top=M_3,\ Y_{1:\ell,1:\ell}=I_\ell,\ Y_{\ell+1:2\ell,\ell+1:2\ell}=I_\ell \right\}.
\end{equation}
Let $M_1:=U_1U_1^\top,$ $M_2:=U_2U_2^\top$ and $M_3=U_3U_3^\top$ for some $U_3\in \R^{\ell\times \ell}.$ Suppose $Y\in \C,$ then it has decomposition 
{$Y=[Q_1;Q_2][Q_1;Q_2]^\top$ with $[Q_1;Q_2]\in \R^{2\ell\times r}$} such that $r\geq \varphi(\ell)$ and for any $i\in [2],$ $Q_iQ_i^\top=I_\ell.$ Because $[U_1,U_2] Y[U_1,U_2]^\top=U_3U_3^\top,$ we have that there exists $Q_3\in \R^{\ell\times r}$ such that $Q_3Q_3^\top=I_\ell$ and 
\begin{equation}\label{condU123}
U_1Q_1+U_2Q_2=U_3Q_3.
\end{equation}
Let $R:=[U_1Q_1;U_2Q_2;U_3Q_3]$ and $X:=RR^\top.$ From (\ref{condU123}), we can see that $X\in \B$ for the above defined $M_1,M_2,M_3.$ Now, we prove that every element of $\B$ has rank at least $\varphi(\ell).$
 Assume on the contrary that $\B$ has an element $X'$ such that $\rr(X')=r'\leq \varphi(\ell)-1.$ Let $X'=R'{R'}^\top$ such that $R'=[R'_1;R'_2;R'_3]$ and for any $i\in [3],$ $R'_i\in \R^{\ell\times r'}.$ We have that
\begin{equation}\label{R'R}
\forall i\in [3],\ R_i'{R_i'}^\top=M_i=U_iU_i^\top,\ R_1'+R_2'=R_3'.
\end{equation}
From (\ref{R'R}), we can see that for any $i\in [3],$ if $r'\geq \ell,$ then there exists $Q_i'\in \R^{\ell\times r'}$ such that $U_iQ_i'=R_i'$ and $Q_i'Q_i'^\top = I_\ell$. If $r'<\ell,$ then there exists $Q_i'\in \R^{\ell\times \ell}$ such that $U_iQ_i'=\left[ R_i',{\bf 0}_{\ell\times (\ell-r')} \right]$ and $Q_i'Q_i'^\top = I_\ell$.
In each case, we have find a set of matrices $Q_1',Q_2',Q_3'\in \R^{\ell\times r''}$ such that $\ell\leq r''\leq \varphi(\ell)-1$\footnote{Note that we have used the fact that $\varphi(\ell)\geq \ell+1$ in Proposition~\ref{propphi}, which was proved independently.} and
\begin{equation}\label{condQ'22}
\forall i\in [3],\ Q_i'{Q_i'}^\top=I_\ell,\ U_1Q_1'+U_2Q_2'=U_3Q_3'.
\end{equation}
Let $H:=[Q_1';Q_2']\in \R^{2\ell\times r''}$ and $Y':=HH^\top.$ From (\ref{condQ'22}), we have that $Y'\in \C.$ Because $\rr(Y')=r''\leq \varphi(\ell)-1,$ we get a contradiction to that any element of $\C$ has rank at least $\varphi(\ell).$ 
\end{proof}

\begin{lem}\label{rankreduc}
For any $p,\ell\in \N,$ consider $U:=[V;I_\ell;I_\ell;-I_\ell]\in \R^{(p+3\ell)\times \ell}$ such that $V\in \R^{p\times \ell},$ $B\in \S^p,$ $C\in \R^{3\ell\times p}$ and $M_1,M_2,M_3\in \S^\ell.$ Then the following set is either empty or has an element of rank at most $p+\varphi(\ell).$
\begin{multline}\label{set3lX}
\B:=\Big\{X\in \S^{p+3\ell}_+:\ X_{1:p,1:p}=B,\ X_{p+1:p+3\ell,1:p}=C,\
 X_{p+1:p+\ell,p+1:p+\ell}=M_1,\\ X_{p+\ell+1:p+2\ell,p+\ell+1:p+2\ell}=M_2,\ X_{p+2\ell+1:p+3\ell,p+2\ell+1:p+3\ell}=M_3,\ U^\top X U={\bf 0}_{\ell\times \ell}\Big\}.
\end{multline}
\end{lem}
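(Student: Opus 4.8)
The plan is to reduce this statement to Lemma~\ref{223} (the $3\ell$-block rank bound in terms of $\varphi(\ell)$) by a Schur-complement argument that peels off the first $p$ coordinates. Suppose $\B\neq\emptyset$ and pick any $X\in\B$. Since $X\succeq 0$ and $X_{1:p,1:p}=B$, the range condition of positive semidefiniteness forces $C=X_{p+1:p+3\ell,1:p}$ to lie in the column space of $B$; write $X=GG^\top$ with $G=[G_0;G_1;G_2;G_3]$, $G_0\in\R^{p\times r}$, $G_i\in\R^{\ell\times r}$ for $i\in[3]$. The constraint $U^\top X U = \mathbf{0}_{\ell\times\ell}$ with $U=[V;I_\ell;I_\ell;-I_\ell]$ says exactly that the rows of $G$ satisfy $V^\top G_0 + G_1 + G_2 - G_3 = \mathbf{0}$, i.e. $G_3 = G_1+G_2+V^\top G_0$. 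First I would substitute this relation to eliminate $G_3$; what remains is a system in $G_0,G_1,G_2$ where $G_0$ is pinned down (up to right orthogonal action) by $G_0G_0^\top=B$, and the diagonal-block constraints become $G_1G_1^\top=M_1$, $G_2G_2^\top=M_2$, and $(G_1+G_2+V^\top G_0)(G_1+G_2+V^\top G_0)^\top = M_3$.

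The key step is then a change of variables that absorbs the $V^\top G_0$ shift. Set $\widetilde G_3 := G_1+G_2 = G_3 - V^\top G_0$. After a reduced QR / common-orthogonal-basis step as in the proof of Lemma~\ref{223} (using $r\geq\ell$; if $r<\ell$ pad with zero columns), one may assume $G_0,G_1,G_2$ each have orthonormal rows times an $\ell\times\ell$ factor, sharing a common ambient of dimension at most something controlled. The point is that $G_0$ contributes an \emph{affine} shift to the third block, and the remaining freedom in $(G_1,G_2)$ — subject to $G_1G_1^\top=M_1$, $G_2G_2^\top=M_2$, and the shifted equation on the third block — is precisely an instance of the set $\B$ in Lemma~\ref{223} with data $M_1,M_2$ and a modified $M_3'$ (obtained by conjugating out the fixed $G_0$-contribution). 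Applying Lemma~\ref{223}, I get an alternative choice of $G_1,G_2$ (and hence $\widetilde G_3$, hence $G_3$) whose total column rank is at most $\varphi(\ell)$; reattaching the $p$ rows $G_0$, whose rank is at most $p$, produces an element $X'\in\B$ with $\rr(X')\leq p+\varphi(\ell)$.

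The main obstacle I anticipate is bookkeeping the affine shift $V^\top G_0$ correctly: Lemma~\ref{223} is stated for the \emph{homogeneous} constraint $[I_\ell,I_\ell,-I_\ell]X[I_\ell;I_\ell;-I_\ell]=\mathbf{0}$, so I must verify that after fixing the first $p$ rows $G_0$ (which is legitimate since $B$, $C$ determine $G_0$'s Gram structure and cross terms), the residual problem in $(G_1,G_2,\widetilde G_3)$ genuinely has the homogeneous $2$-$2$-$3$ form — in particular that the cross-block entries $X_{p+1:p+3\ell,1:p}=C$ are automatically matched once $G_0$ is reused verbatim and the $G_i$'s are replaced, because $C = [G_1;G_2;G_3]G_0^\top$ and replacing $G_i\mapsto G_i'$ with $G_i'Q = G_i$ for a common orthogonal $Q$ preserves $G_i' G_0'^\top$ provided $G_0$ is transformed compatibly. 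Handling the degenerate ranges (when $B$ or the $M_i$ are rank-deficient, so the "reduced QR" must be done with care and possibly zero-padding, exactly as in the footnote-flagged step of Lemma~\ref{223}) is the other place where one must be careful, but it is routine once the reduction is set up. Since the matrix layout in \eqref{set3lX} is identical in structure to \eqref{matSt} in the proof of Theorem~\ref{ubthmtree} (with $p = |V_t|$ and $B = \widetilde X_{V_t,V_t}$), this lemma is exactly what is invoked there, so the argument must produce the bound $p+\varphi(\ell)$ and nothing worse.
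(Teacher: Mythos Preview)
Your strategy---peel off the first $p$ coordinates and reduce to Lemma~\ref{223}---is the paper's approach, but your handling of the cross-block $C$ has a genuine gap. The paper applies the congruence $Y'=LYL^\top$ with $L=\left[\begin{smallmatrix}I_p&0\\-CB^\dagger&I_{3\ell}\end{smallmatrix}\right]$, which zeroes the $p\times 3\ell$ off-diagonal. The residual $3\ell\times 3\ell$ block is then the generalized Schur complement $M'$, whose diagonal blocks are $M_i'=M_i-C_iB^\dagger C_i^\top$ for \emph{all} $i\in[3]$ (not only $i=3$ as you claim), and $U^\top XU=0$ splits as a sum of two PSD terms, forcing $[I_\ell,I_\ell,-I_\ell]M'[I_\ell;I_\ell;-I_\ell]=0$. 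Lemma~\ref{223} then gives a low-rank replacement $M''$ with the same diagonal blocks $M_i'$; transforming $\mathrm{diag}(B,M'')$ back by $L^{-1}(\cdot)L^{-\top}$ automatically restores both $B$ and $C$.

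In your factorization language the correct move is the orthogonal decomposition $G_i=D_i+H_i$ with $D_i$ in the row span of $G_0$ and $H_i$ orthogonal to it: then $C_i=D_iG_0^\top$ pins down $D_i$, and the free parts satisfy $H_iH_i^\top=M_i'$ with $H_1+H_2=H_3$, which is exactly Lemma~\ref{223}. Your proposal skips this and asserts that $C$ is preserved because ``$G_i'Q=G_i$ for a common orthogonal $Q$ \ldots\ provided $G_0$ is transformed compatibly.'' But if $QQ^\top=I$ then $G'Q(G'Q)^\top=G'G'^\top$, so $X'=X$ and no rank has been reduced; a common right-orthogonal factor never changes the Gram matrix. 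The rank reduction inside Lemma~\ref{223} is a genuine change of the off-diagonal Gram blocks, not a rotation, so once you alter $G_1,G_2$ the products $G_i'G_0^\top$ will in general differ from $C_i$ unless you have first isolated and frozen the $D_i$ component as above. Without that Schur-complement/orthogonal-splitting step, the constraint $G_i'G_0^\top=C_i$ is an extra condition that Lemma~\ref{223} does not accommodate, and the reduction does not go through as written.
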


\begin{proof}
We assume that $\ell\in \N^+$ because otherwise $\ell=0,$ $\varphi(\ell)=0$ and there is nothing to prove. The linear constraints in $\B$ can be written as follows

\begin{equation}\label{3lmat}
[V^\top,I_\ell,I_\ell,-I_\ell]\(X=\begin{bmatrix}
    \begin{array}{c|c}
        B
        &
        C^\top
        \\
    \hline
    C
    &
    \begin{array}{ccc}
        M_1 & * & * \\
        * & M_2 & * \\
        * & * & M_3
    \end{array}
\end{array}
\end{bmatrix}\)
\begin{bmatrix}
V\\
I_\ell\\
I_\ell\\
-I_\ell
\end{bmatrix}={\bf 0}_{\ell\times \ell},
\end{equation}
where $*$ denotes the blocks that are undetermined in the sparse linear constraints in $\B.$ Suppose $\B\neq \emptyset,$ then it has an element $Y$ such that

\begin{equation}\label{3lmatY}
Y=\begin{bmatrix}
    \begin{array}{c|c}
        B
        &
        C^\top
        \\
    \hline
    C
    &
    \begin{array}{ccc}
        M_1 & Y_1^\top & Y_2^\top \\
        Y_1 & M_2 & Y_3^\top \\
        Y_2 & Y_3 & M_3
    \end{array}
\end{array}
\end{bmatrix}\in \S^{p+3\ell}_+,\quad U^\top YU={\bf 0}_{\ell\times \ell}
\end{equation}
for some matrices $Y_1,Y_2,Y_3\in \R^{\ell\times \ell}.$ Define $L\in \R^{(p+3\ell)\times (p+3\ell)}$ as follows:
\begin{equation}\label{Lmat}
L:=\begin{bmatrix}
I_p&{\bf 0}_{p\times 3\ell}\\
-CB^{\dagger}&I_{3\ell}
\end{bmatrix},
\end{equation}
where $B^{\dagger}$ is the pseudo inverse of $B.$ Let $Y'=LYL^\top.$ We have that $Y'$ satisfies the follow conditions
\begin{equation}\label{3lmatY'}
Y'=\begin{bmatrix}
    \begin{array}{c|c}
        B
        &
        {\bf 0}_{p\times 3\ell}
        \\
    \hline
    {\bf 0}_{3\ell\times p}
    &
    \begin{array}{ccc}
        M_1' & {Y_1'}^\top & {Y_2'}^\top \\
        Y_1' & M_2' & {Y_3'}^\top \\
        Y_2' & Y_3' & M_3'
    \end{array}
\end{array}
\end{bmatrix}\in \S^{p+3\ell}_+,\quad U^\top L^{-1}Y'L^{-\top}U={\bf 0}_{\ell\times \ell},
\end{equation}
where $Y'_1,Y'_2,Y'_3\in \R^{\ell\times \ell}$ and
\begin{equation}\label{Y'block}
M':=\begin{bmatrix}
\begin{array}{ccc}
        M_1' & {Y_1'}^\top & {Y_2'}^\top \\
        Y_1' & M_2' & {Y_3'}^\top \\
        Y_2' & Y_3' & M_3'
    \end{array}
    \end{bmatrix}=
    \begin{bmatrix}
     \begin{array}{ccc}
        M_1 & Y_1^\top & Y_2^\top \\
        Y_1 & M_2 & Y_3^\top \\
        Y_2 & Y_3 & M_3
    \end{array}
    \end{bmatrix}-CB^{\dagger}C^\top\in \S^{3\ell}_+
\end{equation}
is the generalized Schur complement of $B$ in $Y$ and $L^{-\top}U=[V';I_\ell;I_\ell;-I_\ell]$ such that $V'=V+B^{\dagger}C^\top[I_\ell;I_\ell;-I_\ell].$ From (\ref{3lmatY'}), we have that
\begin{equation}\label{condBcl}
{V'}^\top B {V'}={\bf 0}_{\ell\times \ell},\quad [I_\ell,I_\ell,-I_\ell]M'[I_\ell;I_\ell;-I_\ell]={\bf 0}_{\ell\times \ell}.
\end{equation}
Consider the following set
\begin{multline}\label{setC}
\B':=\Big\{ X\in \S^{3\ell}_+:\ X_{1:\ell,1:\ell}=M_1',\ X_{\ell+1:2\ell,\ell+1:2\ell}=M_2',\ X_{2\ell+1:3\ell,2\ell+1:3\ell}=M_3',\\
 [I_\ell,I_\ell,-I_\ell]X[I_\ell;I_\ell;-I_\ell]={\bf 0}_{\ell\times \ell} \Big\}.
\end{multline}
From (\ref{Y'block}) and (\ref{condBcl}), we have that $M'\in \B'.$ From Lemma~\ref{223}, we know that $\B'$ contains an element of rank at most $\varphi(\ell).$ Let $M''\in \B'$ such that $\rr(M'')\leq \varphi(\ell).$ Consider the following matrix
\begin{equation}\label{matY''}
Y'':=\begin{bmatrix}
B&{\bf 0}_{p\times 3\ell}\\
{\bf 0}_{3\ell\times p} & M''
\end{bmatrix}=
\begin{bmatrix}
    \begin{array}{c|c}
        B
        &
        {\bf 0}_{p\times 3\ell}
        \\
    \hline
    {\bf 0}_{3\ell\times p}
    &
    \begin{array}{ccc}
        M_1' & {Y_1''}^\top & {Y_2''}^\top \\
        Y_1'' & M_2' & {Y_3''}^\top \\
        Y_2'' & Y_3'' & M_3'
    \end{array}
\end{array}
\end{bmatrix}\in \S^{p+3\ell}_+,
\end{equation}
where $Y_1'',Y_2'',Y_3''\in \R^{\ell\times \ell}.$ Because $\rr(M'')\leq \varphi(\ell),$ we have that 
\begin{equation}\label{rankY''}
\rr(Y'')=\rr(B)+\rr(M'')\leq p+\varphi(\ell).
\end{equation}
From (\ref{condBcl}) and (\ref{setC}), we can see that 
\begin{equation}\label{afconY''}
U^\top L^{-1}Y''L^{-\top} U={\bf 0}_{\ell\times \ell}.
\end{equation}
Moreover, from (\ref{Lmat}) and (\ref{Y'block}), $L^{-1}Y''L^{-\top}$ has the following matrix form
\begin{equation}\label{matLY''L}
L^{-1}Y''L^{-\top}=\begin{bmatrix}
    \begin{array}{c|c}
        B
        &
        C^\top
        \\
    \hline
    C
    &
    \begin{array}{ccc}
        M_1 & * & * \\
        * & M_2 & * \\
        * & * & M_3
    \end{array}
\end{array}
\end{bmatrix}
\end{equation}
From (\ref{rankY''}), (\ref{afconY''}) and (\ref{matLY''L}), we have that $L^{-1}Y''L^{-\top}$ is an element of $\B$ with rank at most $p+\varphi(\ell).$
\end{proof}

\section{Some results needed in Subsection~\ref{Subsec:pflb}}\label{app-useful-1}


\begin{lem}\label{omega=0}
For any $\ell\in \N,$ consider the following SDP problem
\begin{equation}\label{exSDP}
\min\left\{ 0:\ \dd(X)={\bf 1}_{\ell+1},\ A^\top X A= I_{\ell}+{\bf 1}_{\ell}{\bf 1}_{\ell}^\top,\ X\in \S^{\ell+1}_+ \right\},
\end{equation}
where $A=\left[ I_{\ell};{\bf 1}_{\ell}^\top \right].$ Then problem (\ref{exSDP}) has $\(G:=\([\ell+1],\emptyset\),\ell\)-$SPLR structure. Moreover, any optimal solution of (\ref{exSDP}) has rank $\ell+1.$
\end{lem}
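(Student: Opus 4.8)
The plan is to show that (\ref{exSDP}) has a \emph{unique} feasible point, namely $I_{\ell+1}$. Since the objective is identically zero, this point is then the unique optimal solution, and its rank is manifestly $\ell+1$, which proves the rank claim; the SPLR structure is a separate, routine verification.

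First I would verify the $(G,\ell)$-SPLR structure. As $G=([\ell+1],\emptyset)$ is edgeless, $\S^{\ell+1}(\E,0)$ is exactly the space of diagonal matrices, so the data matrices $e_ie_i^\top$ of the constraints $\dd(X)={\bf 1}_{\ell+1}$ are already sparse, and I take their low-rank parts to be zero. Writing $a_1,\dots,a_\ell\in\R^{\ell+1}$ for the columns of $A$, each remaining constraint (an entry of $A^\top XA=I_\ell+{\bf 1}_\ell{\bf 1}_\ell^\top$) has a data matrix of the form $\tfrac12(a_ia_j^\top+a_ja_i^\top)$, whose column space lies in $\mathrm{range}(A)$; I take the whole of such a matrix as its low-rank part and zero as its sparse part, and likewise $A_0=0=0+0$. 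Then the concatenation of all low-rank parts has column space contained in $\mathrm{range}(A)$, which has dimension $\ell$ because $A$ has full column rank $\ell$; hence its rank is at most $\ell$, giving the $(G,\ell)$-SPLR property.

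The heart of the argument is uniqueness. For symmetric $X\in\S^{\ell+1}$, split off the last coordinate and write $X=\left[\begin{smallmatrix}\bar X & x\\ x^\top & 1\end{smallmatrix}\right]$ with $\bar X:=X_{1:\ell,1:\ell}\in\S^\ell$ and $x:=X_{1:\ell,\ell+1}\in\R^\ell$ (the bottom-right entry being $1$ by the diagonal constraint, and $\dd(\bar X)={\bf 1}_\ell$). Using $A=[I_\ell;{\bf 1}_\ell^\top]$, a short computation gives $A^\top XA=\bar X+x{\bf 1}_\ell^\top+{\bf 1}_\ell x^\top+{\bf 1}_\ell{\bf 1}_\ell^\top$, so the constraint $A^\top XA=I_\ell+{\bf 1}_\ell{\bf 1}_\ell^\top$ becomes $\bar X+x{\bf 1}_\ell^\top+{\bf 1}_\ell x^\top=I_\ell$. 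Comparing diagonals and using $\dd(\bar X)={\bf 1}_\ell$ together with $\dd(x{\bf 1}_\ell^\top)=\dd({\bf 1}_\ell x^\top)=x$, the diagonal of the left-hand side is ${\bf 1}_\ell+2x$, which must equal $\dd(I_\ell)={\bf 1}_\ell$; hence $x=0$ and then $\bar X=I_\ell$, i.e.\ $X=I_{\ell+1}$. Conversely $I_{\ell+1}$ is feasible, since $A^\top A=I_\ell+{\bf 1}_\ell{\bf 1}_\ell^\top$, $\dd(I_{\ell+1})={\bf 1}_{\ell+1}$, and $I_{\ell+1}\succeq0$.

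There is no genuine obstacle here: once the block decomposition is set up the argument is elementary linear algebra, and the only point needing a little care is the diagonal-matching step — in particular the identity $\dd(x{\bf 1}_\ell^\top)=x$ — which is what forces $x=0$. With the feasible set pinned down to the single matrix $I_{\ell+1}$, the conclusion that every optimal (equivalently, feasible) $X$ has rank $\ell+1$ is immediate.
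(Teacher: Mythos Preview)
Your proof is correct, and in fact slightly cleaner than the paper's. Both arguments hinge on the same diagonal-matching step: from $\bar X + x{\bf 1}_\ell^\top + {\bf 1}_\ell x^\top = I_\ell$ together with $\dd(\bar X)={\bf 1}_\ell$ one reads off $x=0$ and then $\bar X=I_\ell$. The paper, however, packages this as a contradiction argument: it assumes an optimal solution of rank $r\le\ell$, writes $X=RR^\top$ with $R\in\R^{(\ell+1)\times r}$, performs the same computation in terms of the rows of $R$ to obtain $R_{1:\ell,:}R_{\ell+1,:}^\top=0$ and $R_{1:\ell,:}R_{1:\ell,:}^\top=I_\ell$, and then uses the rank hypothesis to force $R_{\ell+1,:}=0$, contradicting $\|R_{\ell+1,:}\|=1$. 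Your route works directly with the block entries of $X$, never invokes the PSD constraint or a factorization, and yields the stronger conclusion that the linear constraints alone pin down the feasible set to the singleton $\{I_{\ell+1}\}$; this renders the paper's final contradiction step unnecessary.
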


\begin{proof}
We only consider $\ell\geq 1,$ since there is nothing to prove when $\ell=0.$ It is easy to see that problem (\ref{exSDP}) has $(\([\ell+1],\emptyset\),\ell)-$SPLR structure, where the sparse constraints are $\dd(X)={\bf 1}_{\ell+1}$ and the low-rank constraints are $A^\top X A= I_{\ell}+{\bf 1}_{\ell}{\bf 1}_{\ell}^\top.$ Moreover, the matrix $X=I_{\ell+1}$ is an optimal solution of rank $\ell+1.$ Now, assume on the contrary that the problem (\ref{exSDP}) has an optimal solution $X$ of rank $r\leq \ell.$ Then it has the decomposition $X=RR^\top$ for some $R\in \R^{(\ell+1)\times r}.$ From the diagonal constraint, we have that $\dd(RR^\top)={\bf 1}_{\ell+1}$ and hence $\|R_{\ell+1,:}\|=1.$ From $A^\top X A= I_{\ell}+{\bf 1}_{\ell}{\bf 1}_{\ell}^\top$, we have that
\begin{equation}\label{Feb_5_1}
R_{1:\ell,:}R_{1:\ell,:}^\top+{\bf 1}_{\ell}R_{\ell+1,:}R_{1:\ell,:}^\top+R_{1:\ell,:}R_{\ell+1,:}^\top {\bf 1}_{\ell}^\top+{\bf 1}_{\ell}{\bf 1}_{\ell}^\top=I_{\ell}+{\bf 1}_{\ell}{\bf 1}_{\ell}^\top,
\end{equation}
which can be simplified to 
\begin{equation}\label{Feb_5_2}
R_{1:\ell,:}R_{1:\ell,:}^\top+{\bf 1}_{\ell}R_{\ell+1,:}R_{1:\ell,:}^\top+R_{1:\ell,:}R_{\ell+1,:}^\top {\bf 1}_{\ell}^\top=I_{\ell}.
\end{equation}
Because $\dd(R_{1:\ell,:}R_{1:\ell,:}^\top)={\bf 1}_\ell,$ from the diagonal entries of (\ref{Feb_5_2}), we have that $R_{1:\ell,:}R_{\ell+1,:}^\top={\bf 0}_{\ell}.$ Substitute this into (\ref{Feb_5_2}), we get $R_{1:\ell,:}R_{1:\ell,:}^\top=I_\ell.$ Because $r\leq \ell,$ we have that $R_{1:\ell,:}$ is full rank and $r=\ell.$ This together with $R_{1:\ell,:}R_{\ell+1,:}^\top={\bf 0}_{\ell\times 1}$ implies that $R_{\ell+1,:}={\bf 0}_{1\times r},$ which is contradictory to the fact that $\|R_{\ell+1,:}\|=1.$ This completes the proof. 
\end{proof}

\begin{lem}\label{extlem}
For any $G=([n],\E)$ and $\ell\in \N,$ suppose the problem (\ref{SDP}) has $(G,\ell)-$SPLR structure and its optimal solution has rank at least $r.$ For any $\sigma,\hat{n}\in \N$ such that $\hat{n}\geq n+\sigma,$ consider the following new SDP problem:
\begin{multline}\label{extSDP}
\min\Big\{ \<A_0,X_{1:n,1:n}\>:\ \forall i\in [m],\ \<A_i,X_{1:n,1:n}\>=b_i,\ X_{n+1:n+\sigma,1:n}={\bf 0}_{\sigma\times n},\\
 X_{1:n,n+1:n+\sigma}={\bf 0}_{n\times\sigma},\ X_{n+1:n+\sigma,n+1:n+\sigma}=I_\sigma, X\in \S^{\hat{n}}_+\Big\}.
\end{multline}
Define the graph
\begin{equation}\label{extenGlem}
\widehat{G}:=\([\hat{n}],\E\sqcup E\([n+\sigma]\setminus [n],[n]\)\sqcup E\( [n+\sigma]\setminus [n]\)\).
\end{equation}
Then problem (\ref{extSDP}) has $\big(\widehat{G},\ell\big)-$SPLR structure such that 
$\pw\big(\widehat{G}\big)=\pw\(G\)+\sigma,$ $\tw\big(\widehat{G}\big)=\tw\(G\)+\sigma$ and its optimal solution has rank at least $r+\sigma.$
\end{lem}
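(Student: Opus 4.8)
The approach is to establish the three assertions of the lemma separately, all of them resting on one structural observation: the affine constraints $X_{n+1:n+\sigma,1:n}={\bf 0}_{\sigma\times n}$, $X_{1:n,n+1:n+\sigma}={\bf 0}_{n\times\sigma}$ and $X_{n+1:n+\sigma,n+1:n+\sigma}=I_\sigma$ force every feasible $X$ of \eqref{extSDP} to be block diagonal, $X={\rm Diag}(X_{1:n,1:n},\,I_\sigma,\,X_{n+\sigma+1:\hat{n},\,n+\sigma+1:\hat{n}})$, and then $X\in\S^{\hat{n}}_+$ is equivalent to $X_{1:n,1:n}\in\S^{n}_+$ together with $X_{n+\sigma+1:\hat{n},\,n+\sigma+1:\hat{n}}\in\S^{\hat{n}-n-\sigma}_+$. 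Since the objective and the constraints indexed by $[m]$ of \eqref{extSDP} are exactly the objective and constraints of \eqref{SDP} read off the principal block $X_{1:n,1:n}$, the map $X\mapsto X_{1:n,1:n}$ is an objective-preserving bijection between the feasible set of \eqref{extSDP} and the set of pairs $(X',Z)$ with $X'$ feasible for \eqref{SDP} and $Z\in\S^{\hat{n}-n-\sigma}_+$. In particular the two problems have the same optimal value, and $\widehat{X}$ is optimal for \eqref{extSDP} if and only if $\widehat{X}={\rm Diag}(X^{*},I_\sigma,Z)$ with $X^{*}$ optimal for \eqref{SDP} and $Z\succeq{\bf 0}$.

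For the SPLR claim I would take, for the $m+1$ data matrices inherited from \eqref{SDP}, the sparse parts ${\rm Diag}(A_i^{s},{\bf 0})$ (which lie in $\S^{\hat{n}}(\widehat{\E},0)$ since $\E\subseteq\widehat{\E}$) and the low-rank parts ${\rm Diag}(A_i^{l},{\bf 0})$, and for every genuinely new scalar constraint ($X_{kj}=0$ with $k>n\geq j$, $X_{kl}=0$ with distinct $k,l>n$, and $X_{kk}=1$ with $k>n$) I would place the whole data matrix in the sparse part and take zero low-rank part. Writing $\widehat{\E}$ for the edge set of $\widehat{G}$ in \eqref{extenGlem}, every off-diagonal entry touched by these new data matrices sits on an edge of $E([n+\sigma]\setminus[n],[n])\sqcup E([n+\sigma]\setminus[n])\subseteq\widehat{\E}$, so each such sparse part lies in $\S^{\hat{n}}(\widehat{\E},0)$; and the column space of $[\widehat{A}_0^{l},\ldots,\widehat{A}_m^{l}]$ equals that of $[A_0^{l},\ldots,A_m^{l}]$ with zero rows appended, hence has dimension at most $\ell$. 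This gives the $(\widehat{G},\ell)$-SPLR structure.

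The width identities are the graph-theoretic heart of the argument. With $[n+\sigma]\setminus[n]=\{n+1,\dots,n+\sigma\}$, the graph $\widehat{G}$ is the disjoint union of the join $G\vee K_\sigma$ (each new vertex adjacent to every vertex of $[n]$ and to the other new vertices) with $\hat{n}-n-\sigma$ isolated vertices, and isolated vertices affect neither tree-width nor path-width (attach singleton bags); so it suffices to prove $\tw(G\vee K_\sigma)=\tw(G)+\sigma$ and $\pw(G\vee K_\sigma)=\pw(G)+\sigma$. I would prove the one-vertex statement: if $v$ is a universal vertex of a graph $H$ then $\tw(H)=\tw(H-v)+1$ and $\pw(H)=\pw(H-v)+1$; iterating it $\sigma$ times (each new vertex is universal in the graph remaining after the previous deletions, and deleting all of them from $G\vee K_\sigma$ leaves $G$) yields the claim. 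The bound ``$\leq$'' is immediate: insert $v$ into every bag of an optimal tree (resp.\ path) decomposition of $H-v$, increasing the width by exactly $1$. For ``$\geq$'', start from a tree decomposition $(T,\V)$ of $H$ of width $w$, let $T_v\subseteq T$ be the subtree of nodes whose bag contains $v$, and delete $v$ from those bags. The vertex-cover and running-intersection conditions for $H-v$ are inherited, and for the edge-cover condition one uses that for any edge $u_1u_2$ of $H-v$ the subtrees $T_{u_1},T_{u_2},T_v$ pairwise intersect (since $u_1v$, $u_2v$ and $u_1u_2$ are all edges of $H$), hence by the Helly property of subtrees of a tree they share a node, which lies in $T_v$ and whose bag contains both $u_1$ and $u_2$. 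Thus $(T_v,\{V_t\setminus\{v\}:t\in T_v\})$ is a tree decomposition of $H-v$ of width at most $w-1$, and the identical argument with $T$ a path handles path-width.

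Finally, the rank bound follows at once from the first paragraph: any optimal solution $\widehat{X}$ of \eqref{extSDP} equals ${\rm Diag}(X^{*},I_\sigma,Z)$ with $X^{*}$ optimal for \eqref{SDP} and $Z\succeq{\bf 0}$, so $\rr(\widehat{X})=\rr(X^{*})+\sigma+\rr(Z)\geq r+\sigma$ by the hypothesis $\rr(X^{*})\geq r$. The step I expect to be the main obstacle is the ``$\geq$'' direction of the universal-vertex lemma, and within it the edge-cover verification via the Helly property of subtrees of a tree; the SPLR bookkeeping and the rank argument are routine once the block-diagonal structure is established.
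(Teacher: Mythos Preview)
Your overall strategy matches the paper's, and your treatment of the tree-width and path-width identities is considerably more thorough than the paper's own proof, which simply asserts them as ``easy to see''. The universal-vertex lemma via the Helly property of subtrees is correct and is a clean way to make that step rigorous.

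There is, however, one error in your opening structural observation. The affine constraints in \eqref{extSDP} do \emph{not} force $X$ to be block diagonal when $\hat{n}>n+\sigma$: nothing constrains the blocks $X_{n+\sigma+1:\hat{n},\,1:n}$ or $X_{n+\sigma+1:\hat{n},\,n+1:n+\sigma}$, so a feasible $X$ may have arbitrary (PSD-compatible) entries there. Consequently the bijection you describe and the rank identity $\rr(\widehat{X})=\rr(X^{*})+\sigma+\rr(Z)$ are both false as stated.

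The fix is immediate and is precisely what the paper does: work with the principal submatrix $X_{1:n+\sigma,\,1:n+\sigma}$ instead of the full $X$. That submatrix \emph{is} block diagonal, equal to ${\rm Diag}(X_{1:n,1:n},I_\sigma)$, directly from the constraints. Your argument that $X_{1:n,1:n}$ is optimal for \eqref{SDP} still goes through (it is feasible as a principal submatrix of a PSD matrix, and the optimal values coincide via the embedding $X'\mapsto{\rm Diag}(X',I_\sigma,{\bf 0})$), and then
\[
\rr(X)\;\geq\;\rr\big(X_{1:n+\sigma,\,1:n+\sigma}\big)\;=\;\rr(X_{1:n,1:n})+\sigma\;\geq\;r+\sigma.
\]
With this correction your proof is complete; the SPLR bookkeeping and the width arguments are unaffected.
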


\begin{proof}
Because the problem (\ref{SDP}) has $(G,\ell)-$SPLR structure. It has decomposition $A_i=A_i^s+A_i^l$ such that $\dim\({\rm Col}([A_0^l,A_1^l,\ldots,A_m^l])\)\leq \ell$ and for any $i\in \setm,$ $A_i^s\in \S^n(\E,0).$ From (\ref{extenGlem}), problem (\ref{extSDP}) has 
$\big(\widehat{G},\ell\big)-$SPLR structure by considering the last three linear constraints as sparse constraints. Moreover, from the edge set of $\widehat{G}$ in (\ref{extenGlem}), it is easy to see that $\pw\big(\widehat{G}\big)=\pw(G)+\sigma$ and $\tw\big(\widehat{G}\big)=\tw(G)+\sigma.$ From the structure of (\ref{extSDP}), we can see that for any optimal solution $X\in \S^{\hat{n}}_+,$ $X_{1:n,1:n}$ is an optimal solution of (\ref{SDP}). It has rank at least $r.$ Thus, $X_{1:n+\sigma,1:n+\sigma}$ has rank at least $r+\sigma.$
\end{proof}

\bibliographystyle{abbrv}
\bibliography{SPLR}

\end{document}